\newtheorem{thm}{Theorem}[section]
\newtheorem{cor}[thm]{Corollary}
\newtheorem{prop}[thm]{Proposition}
\theoremstyle{remark}
\newtheorem{remark}[thm]{Remark}
\newtheorem{example}[thm]{Example}
\newtheorem*{notation}{Notation}
\theoremstyle{definition}
\newtheorem{definition}[thm]{Definition}
\numberwithin{equation}{subsection}
\newcommand{\op}{\mathtt{op}}
\newcommand{\co}{\mathtt{co}}
\newcommand{\id}{{\mathtt{Id}}}
\newcommand{\dbar}{\overline{\partial}}
\DeclareMathOperator{\ad}{ad}
\DeclareMathOperator{\Conj}{\mathtt{c}}
\DeclareMathOperator{\Hom}{Hom}
\DeclareMathOperator{\End}{End}
\DeclareMathOperator{\Aut}{Aut}
\DeclareMathOperator{\Der}{Der}
\DeclareMathOperator{\shExt}{\underline{Ext}}
\DeclareMathOperator{\shAut}{\underline{Aut}}
\DeclareMathOperator{\shHom}{\underline{Hom}}
\DeclareMathOperator{\shEnd}{\underline{End}}
\DeclareMathOperator{\shEq}{\underline{Eq}}
\DeclareMathOperator{\DQ}{\mathfrak{DQ}}
\DeclareMathOperator{\DES}{DES}
\DeclareMathOperator{\shDES}{\underline{\DES}}
\DeclareMathOperator{\gpdExt}{\mathcal{Ext}}
\DeclareMathOperator{\shgpdExt}{\underline{\gpdExt}}
\DeclareMathOperator{\gr}{gr}
\DeclareMathOperator{\Gr}{Gr}
\DeclareMathOperator{\rk}{rk}
\DeclareMathOperator{\avg}{avg}
\DeclareMathOperator{\im}{Im}
\DeclareMathOperator{\coker}{coker}
\DeclareMathOperator{\zentrum}{\mathtt{Z}}
\begin{document}
\title{On the classification of symplectic DQ-algebroids}

\author[P.Bressler]{Paul Bressler}
\address{Departamento de Matem\'aticas, Universidad de Los Andes}
\email{paul.bressler@gmail.com}
\author[J.D.Rojas]{Juan Diego Rojas}
\address{Departamento de Matem\'aticas, Universidad de Los Andes} 
\email{juandrojas33@gmail.com}

\begin{abstract}
DQ-algebroids locally defined on a symplectic manifold form a 2-gerbe. By adapting the method of P. Deligne to the setting of DQ-algebroids we show that this 2-gerbe admits a canonical global section, namely that every symplectic manifold admits a canonical DQ-algebroid quantizing the structure sheaf. The construction relies on methods of non-abelian cohomology and local computations in the Weyl algebra. As a corollary we obtain a classification of symplectic DQ-algebroids.
\end{abstract}

\maketitle

%
%
%
%
%


\section{Introduction}
While deformation quantization was originally developed in the context of (sheaves of) algebras, it became apparent from the work of M. Kashiwara \cite{Kashiwara-contact} and M. Kontsevich \cite{kontsevich} that the broader context of algebroid stacks provides a natural setting for the theory. These developments lead to the introduction of DQ-algebroids by M. Kashiwara and P. Schapira \cite{KS}.

The classical limit of a DQ-algebroid on a manifold $X$ is not the structure sheaf $\mathcal{O}_X$ but a linear version of a $\mathcal{O}_X^\times$-gerbe. The study of deformation quantization of gerbes was initiated in \cite{DQgerbes} where various earlier results on deformations of (sheaves of) associative algebras were generalized. In particular, deformation quantizations of a gerbe on a symplectic manifold were classified using an extension of B.V. Fedosov's approach.

The question of existence and classification of symplectic deformation quantizations in the $C^\infty$ setting was resolved by M. De Wilde and P.B.A. Lecomte \cite{WL1983,WL1985}, and by B.V. Fedosov \cite{fedosov}. An account of their work which uses some tools of nonabelian cohomology was given by P. Deligne in \cite{D}. In the present note we adapt the method of \cite{D} to the problem of classification of symplectic DQ-algebroids.

Suppose that $X$ is a symplectic manifold with the symplectic form denoted by $\omega$. Since all locally defined DQ-algebroids which give rise to $\omega$ are locally equivalent, it follows that the 2-stack $\DQ^\omega_X$ of such is in fact a 2-gerbe. Moreover, for any locally defined DQ-algebroid $\mathcal{C}\in\DQ^\omega_X$ the stack of autoequivalences is equivalent to the gerbe $\mathbb{C}[[t]]^\times[1]$ of $\mathbb{C}[[t]]^\times$-torsors and, therefore, $\DQ^\omega_X$ is a twisted form of (i.e. is locally equivalent to) the 2-gerbe $\mathbb{C}[[t]]^\times[2]$ of $\mathbb{C}[[t]]^\times$-gerbes. The problem of existence of deformation quantization of $(X,\omega)$ may therefore be restated as the problem of showing that the 2-category $\DQ^\omega_X(X)$ is non-empty. 

The main result of the present work, Theorem \ref{thm: canonical quantization} says that this is indeed the case and that, in fact, there is a canonical choice of a quantization $\mathcal{C}_{can}\in\DQ^\omega_X(X)$, while Theorem \ref{thm: classification} gives an answer to the the classification problem in the form of the canonical equivalence between $\DQ^\omega_X$ and $\mathbb{C}[[t]]^\times[2]$ which induces a bijection between the set of equivalence classes or quantizations $\pi_0\DQ^\omega_X(X)$ and $H^2(X; \mathbb{C}[[t]]^\times)$.

Previously, existence of canonical quantization as well classification results had been established in the complex-analytic context by M. Kashiwara, A. d'Agnolo and P. Polesello in terms of sheaves of algebras of microdifferential and WKB operators in \cite{Kashiwara-contact}, \cite{dagnolo-polesello_complex-involutive-submanifolds}, \cite{polesello-classification}, \cite{andrea-masaki}.

The treatment presented here relies on elementary properties of the Moyal-Weyl star-product and makes heavy use of the theory of abelian (higher) torsors and gerbes. To give a uniform treatment of a range of cases which includes plain $C^\infty$ manifolds as well as complex-analytic manifolds, we work in the natural generality of a $C^\infty$ manifold $X$ equipped with an integrable complex distribution $\mathcal{P}$ which satisfies the technical condition \eqref{sufficient condition} (see Appendix \ref{appendix: distributions} for definitions and notation).

The paper is organized as follows. In Section \ref{section: DQ-algebras} we review the basics of symplectic DQ-algebras and introduce dilation equivariance structures (DES) on DQ-algebras following \cite{D}. In Section \ref{section: Symplectic DQ-algebroids} we recall the basic definitions and facts about DQ-algebroids and describe the stack of symplectic DQ-algebroids equipped with DES. Section \ref{section: Classical limits of DES} is devoted to the study of the behavior of the DES under the classical limit map.
In Section \ref{section: Self-duality structures} we introduce the self-duality structures on symplectic DQ-algebroids and describe the stack of symplectic DQ-algebroids equipped with self-duality structures. In Section \ref{section: Compatibility of DES with self-duality structures} we define what it means for a DES and a self-duality structure to be compatible and describe the stack of symplectic DQ-algebroids equipped with compatible DES and self-duality structures. In Section \ref{section: Classification of symplectic DQ-algebroids} we identify the canonical quantization and state the classification result for symplectic DQ-algebroids.

For the reader's convenience we include two appendices whose content, to a large degree, is borrowed from \cite{QCL}. The relevant basic facts on calculus in the presence of an integrable complex distribution are summarized in Appendix \ref{appendix: distributions}. In Appendix \ref{appendix: algebroid stacks} we give a condensed account of the basic theory of abelian (higher) torsors and gerbes and introduce the notation used throughout the main body of the article. A more detailed presentation of the subject may be found in \cite{Breen} and \cite{M}. 

\section{DQ-algebras}\label{section: DQ-algebras}
Throughout the paper $X$ is a $C^\infty$ manifold equipped with an integrable complex distribution $\mathcal{P}$ which satisfies \eqref{sufficient condition} (see Appendix \ref{appendix: distributions} for definitions and notations). We denote by $\mathcal{O}_X$ (respectively, $\mathcal{O}_{X/\mathcal{P}}$) the sheaf of complex valued $C^\infty$ functions (respectively, the subsheaf of $\mathcal{P}$-invariant functions).

In the context of complex manifolds the notion of a DQ-algebra was introduced in \cite{KS}. 

\subsection{Star-products}
A \emph{star-product} on $\mathcal{O}_{X/\mathcal{P}}$ is a map
\begin{equation*}
\mathcal{O}_{X/\mathcal{P}}\otimes_\mathbb{C}\mathcal{O}_{X/\mathcal{P}} \to \mathcal{O}_{X/\mathcal{P}}[[t]] .
\end{equation*}
of the form
\begin{equation}\label{star-product}
f\otimes g \mapsto f\star g = fg + \sum_{i=1}^\infty P_i(f,g)t^i,
\end{equation}
where $P_i$ are bi-differential operators. Such a map admits a unique $\mathbb{C}[[t]]$-bilinear extension
\begin{equation*}
\mathcal{O}_{X/\mathcal{P}}[[t]]\otimes_{\mathbb{C}[[t]]}\mathcal{O}_{X/\mathcal{P}}[[t]] \to \mathcal{O}_{X/\mathcal{P}}[[t]]
\end{equation*}
and the latter is required to define a structure of an associative unital $\mathbb{C}[[t]]$-algebra on $\mathcal{O}_{X/\mathcal{P}}[[t]]$.


\begin{prop}[{\cite[Proposition 2.2.3]{KS}}]\label{prop: iso star-prod diff op}
Let $\star$ and $\star^\prime$ be star-products and let $\varphi\colon (\mathcal{O}_{X/\mathcal{P}}[[t]], \star) \to (\mathcal{O}_{X/\mathcal{P}}[[t]], \star^\prime)$ be a morphism of $\mathbb{C}[[t]]$-algebras. Then, there exists a unique sequence of differential operators $\{R_i \}_{i\geq 0}$ on $X$ such that $R_0 = 1$ and $\varphi(f) = \sum\limits_{i=0}^\infty R_i(f)t^i$ for any $f\in \mathcal{O}_{X/\mathcal{P}}$. In particular, $\varphi$ is an isomorphism.
\end{prop}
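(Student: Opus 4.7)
The plan is to (i) extract the operators $R_i$ from $\varphi$ via $\mathbb{C}[[t]]$-linearity, (ii) prove by induction on $i$ that each $R_i$ is a differential operator, and (iii) invert $\varphi$ using $R_0 = \id$. Since $\varphi$ is $\mathbb{C}[[t]]$-linear and $t$-adically continuous, it is determined by its restriction to $\mathcal{O}_{X/\mathcal{P}}$, so we may write $\varphi(f) = \sum_{i \geq 0} R_i(f) t^i$ for unique $\mathbb{C}$-linear maps $R_i \colon \mathcal{O}_{X/\mathcal{P}} \to \mathcal{O}_{X/\mathcal{P}}$; this gives uniqueness and the form of the expansion. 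Reducing the algebra-morphism identity $\varphi(f \star g) = \varphi(f) \star^\prime \varphi(g)$ modulo $t$ shows that $R_0$ is a unital $\mathbb{C}$-algebra endomorphism of $\mathcal{O}_{X/\mathcal{P}}$; the condition $R_0 = \id$ is part of the standing convention on morphisms of star-products (equivalently, $\varphi \equiv \id \pmod{t}$) and is what is needed in the sequel.

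Next, I would expand $\varphi(f \star g) = \varphi(f) \star^\prime \varphi(g)$ and equate coefficients of $t^n$. Using $R_0 = \id$ and that the degree-zero part of each star product is the ordinary product, one isolates the terms in which $R_n$ appears linearly and obtains an identity of the form
\begin{equation*}
R_n(fg) - f R_n(g) - R_n(f) g = \Phi_n(f, g),
\end{equation*}
in which $\Phi_n(f,g)$ is a bilinear expression built from the bi-differential operators $P_j$ and $P_k^\prime$ (with $j, k \geq 1$) and from $R_0, \ldots, R_{n-1}$. By the inductive hypothesis these lower $R_k$ are differential operators, and therefore $\Phi_n$ is a bi-differential operator in $(f, g)$.

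The main obstacle is then the following lemma, which closes the induction step: \emph{if the Hochschild coboundary $(\delta R)(f, g) := f R(g) - R(fg) + R(f) g$ of a $\mathbb{C}$-linear operator $R$ on $\mathcal{O}_{X/\mathcal{P}}$ is a bi-differential operator of order $\leq b$ in its second argument, then $R$ itself is a differential operator of order $\leq b + 1$.} I would prove this using the iterated-commutator characterization: $R$ has order $\leq d$ if and only if $[[\cdots[R, f_0], f_1], \ldots, f_d] = 0$ for all $f_i \in \mathcal{O}_{X/\mathcal{P}}$ acting by multiplication. A short direct computation gives
\begin{equation*}
[[R, f_1], f_2] = -[T_{f_1}, f_2], \qquad T_{f_1}(g) := (\delta R)(f_1, g),
\end{equation*}
and, by induction on the number of brackets, the $k$-fold iterated commutator of $R$ with $f_1, \ldots, f_k$ equals, up to sign, the $(k-1)$-fold iterated commutator of $T_{f_1}$ with $f_2, \ldots, f_k$. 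Since each $T_{f_1}$ is a differential operator of order $\leq b$ in $g$, this vanishes once $k - 1 \geq b + 1$, giving $R$ order $\leq b + 1$. Applying the lemma to $R = R_n$ with $\delta R_n = -\Phi_n$ completes the induction.

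Finally, the condition $R_0 = \id$ implies that $\psi := \varphi - \id$ sends $t^n \mathcal{O}_{X/\mathcal{P}}[[t]]$ into $t^{n+1} \mathcal{O}_{X/\mathcal{P}}[[t]]$ for every $n$, so $\psi$ is topologically nilpotent in the $t$-adic topology. The Neumann series $\sum_{k \geq 0} (-\psi)^k$ then converges $t$-adically to a $\mathbb{C}[[t]]$-linear two-sided inverse of $\varphi$, which is automatically multiplicative since $\varphi$ is, yielding the isomorphism assertion.
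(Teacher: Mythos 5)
The paper offers no proof of this statement: it is quoted from \cite[Proposition 2.2.3]{KS}, with only the remark that the holomorphic argument carries over to a general integrable distribution. Measured against that standard argument, the skeleton of your proof is sound and is essentially the same route: extract the $R_i$ from $\mathbb{C}[[t]]$-linearity and $t$-adic continuity; compare coefficients of $t^n$ in $\varphi(f\star g)=\varphi(f)\star'\varphi(g)$ to get $R_n(fg)-fR_n(g)-R_n(f)g=\Phi_n(f,g)$ with $\Phi_n$ bidifferential by the inductive hypothesis; and close the induction with the lemma that an operator whose Hochschild coboundary is bidifferential of order $\leq b$ in the second slot is itself a differential operator of order $\leq b+1$. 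Your commutator computation $[[R,f_1],f_2]=-[T_{f_1},f_2]$ is correct (the term $m_{R(f_1)}$ drops out because multiplication operators commute), and the Neumann-series inversion at the end is fine.

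The genuine gap is your treatment of $R_0$. You assert that ``$R_0=\id$ is part of the standing convention on morphisms of star-products,'' but no such convention exists here: a morphism of DQ-algebras is merely a morphism of sheaves of $\mathbb{C}[[t]]$-algebras, and the equality $R_0=1$ is part of the \emph{conclusion} of the proposition, not a hypothesis. It is also the pivot of everything that follows: if $R_0$ were a nontrivial algebra endomorphism, the coefficient identity would read $R_n(fg)-R_0(f)R_n(g)-R_n(f)R_0(g)=\Phi_n(f,g)$, your coboundary lemma would not apply, and $\varphi-\id$ would not be topologically nilpotent, so the isomorphism claim would fail. What is needed is the observation that reduction modulo $t$ exhibits $R_0$ as a unital endomorphism of the sheaf of $\mathbb{C}$-algebras $\mathcal{O}_{X/\mathcal{P}}$ over the identity of $X$, and that any such endomorphism is the identity: the stalk of $\mathcal{O}_{X/\mathcal{P}}$ at $x$ is a local ring (a $\mathcal{P}$-invariant germ nonvanishing at $x$ is invertible), so the character $\mathrm{ev}_x\circ R_{0,x}$ must be reduction modulo the unique maximal ideal, i.e.\ evaluation at $x$; hence $R_0(f)(x)=f(x)$ for all $f$ and all $x$. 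Insert this step and your argument is complete.
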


\remark
The paper \cite{KS} and, in particular, Proposition 2.2.3 of loc. cit. pertain to the holomorphic context, i.e. the case when $\mathcal{P}$ is a complex structure. However, it is easy to see that the proof as well as the results it is based upon carry over to the case of a general integrable distribution.
\endremark

\subsection{DQ-algebras}
A DQ-algebra is a sheaf of $\mathbb{C}[[t]]$-algebras locally isomorphic to a star-product on $\mathcal{O}_{X/\mathcal{P}}$. For a DQ-algebra $\mathbb{A}$ there is a canonical isomorphism $\mathbb{A}/t\cdot\mathbb{A}\cong \mathcal{O}_{X/\mathcal{P}}$. Therefore, there is a canonical map  (reduction modulo $t$) $\mathbb{A} \xrightarrow{\sigma} \mathcal{O}_{X/\mathcal{P}}$ of $\mathbb{C}[[t]]$-algebras.

A morphism of DQ-algebras is a morphism of sheaves of $\mathbb{C}[[t]]$-algebras.

\subsection{The associated Poisson structure}
Suppose that $\mathbb{A}$ is a DQ-algebra. The composition
\begin{equation*}
\mathbb{A}\otimes\mathbb{A} \xrightarrow{[\cdot,\cdot]} \mathbb{A} \xrightarrow{\sigma} \mathcal{O}_{X/\mathcal{P}}
\end{equation*}
is trivial. Therefore, the commutator $\mathbb{A}\otimes\mathbb{A} \xrightarrow{[\cdot,\cdot]} \mathbb{A}$ takes values in $t\mathbb{A}$. The composition
\begin{equation*}
\mathbb{A}\otimes\mathbb{A} \xrightarrow{[\cdot,\cdot]} t\mathbb{A} \xrightarrow{t^{-1}} \mathbb{A} \xrightarrow{\sigma} \mathcal{O}_{X/\mathcal{P}}
\end{equation*}
factors uniquely as
\begin{equation*}
\mathbb{A}\otimes\mathbb{A} \xrightarrow{\sigma\otimes\sigma} \mathcal{O}_{X/\mathcal{P}}\otimes\mathcal{O}_{X/\mathcal{P}} \xrightarrow{\{\cdot,\cdot\}} \mathcal{O}_{X/\mathcal{P}}
\end{equation*}
The latter map, $\{\cdot,\cdot\}\colon \mathcal{O}_{X/\mathcal{P}}\otimes\mathcal{O}_{X/\mathcal{P}} \to \mathcal{O}_{X/\mathcal{P}}$ is a Poisson bracket on $\mathcal{O}_{X/\mathcal{P}}$, hence corresponds to a bi-vector $\pi\in \Gamma(X;\bigwedge^2\mathcal{T}_{X/\mathcal{P}})$.

\lemma[{\cite[Lemma 2.6]{QCL}}]\label{lemma: isom dq same poisson}
Locally isomorphic DQ-algebras give rise to the same associated Poisson bracket.
\endlemma

\subsection{Symplectic DQ-algebras}
A DQ-algebra is called \emph{symplectic} if the associated Poisson structure is non-degenerate.

\begin{example}\label{example: MW}
Suppose that $\{\cdot,\cdot\}$ is a symplectic Poisson bracket on $\mathcal{O}_{X/\mathcal{P}}$. Thus, $\rk_{\mathcal{O}_{X/\mathcal{P}}}\mathcal{T}_{X/\mathcal{P}}$ is even, i.e. equal to $2n$ for suitable $n$. Let $U\subset X$ be an open subset and $p_i, q_i \in \mathcal{O}_{X/\mathcal{P}}(U)$, $i=1,\ldots,n$, satisfy the canonical relations. We denote by $\dfrac{\partial\ }{\partial p_1},\ldots,\dfrac{\partial\ }{\partial p_n},\allowbreak\dfrac{\partial\ }{\partial q_1},\ldots,\dfrac{\partial\ }{\partial q_n}$ the basis dual to $dp_1,\ldots,dp_n,dq_1,\ldots,dq_n$. The Moyal-Weyl product is given by
\[
f\star g = \left.\exp\left(\dfrac{t}2\sum\limits_{i=1}^n\left(   
\dfrac{\partial\ }{\partial p_i}\otimes\dfrac{\partial\ }{\partial q_i} - \dfrac{\partial\ }{\partial q_i}\otimes\dfrac{\partial\ }{\partial p_i}
\right)\right)f\otimes g\right\vert_\Delta
\]
where restriction to the diagonal $\Delta$ signifies the multiplication map $\mathcal{O}_{X/\mathcal{P}}\otimes_\mathbb{C} \mathcal{O}_{X/\mathcal{P}} \to \mathcal{O}_{X/\mathcal{P}}$.
\end{example}

\begin{prop}[Quantum Darboux Lemma]\label{prop: QDL}
Suppose that $\mathbb{A}$ is a symplectic DQ-algebra. Then, every point $x\in X$ has a neighborhood $x\in U \subset X$ such that for a collection of functions $p_i, q_i \in \mathcal{O}_{X/\mathcal{P}}(U)$, $i = 1,\ldots, \dfrac12\rk\mathcal{T}_{X/\mathcal{P}}$ which satisfy canonical relations there exist sections $\widehat{p}_i, \widehat{q}_i \in \mathbb{A}(U)$ such that $\sigma(\widehat{p}_i) = p_i$, $\sigma(\widehat{q}_i) = q_i$ and and $[\widehat{p}_i, \widehat{p}_j] = [\widehat{q}_i, \widehat{q}_j] = 0$, $[\widehat{p}_i, \widehat{q}_j] = \delta_{ij}$.
\end{prop}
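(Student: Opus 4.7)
The plan is to combine the classical Darboux theorem with an inductive construction on the order of $t$, after locally identifying $\mathbb{A}$ with a concrete star-product algebra via Proposition~\ref{prop: iso star-prod diff op}. Applied to the symplectic form dual to the bi-vector $\pi$ associated to $\mathbb{A}$, the Darboux theorem in the $\mathcal{P}$-invariant setting of Appendix~\ref{appendix: distributions} supplies a contractible neighborhood $U\ni x$ and functions $p_i,q_i\in\mathcal{O}_{X/\mathcal{P}}(U)$ with the required Poisson relations. After possibly shrinking $U$, Proposition~\ref{prop: iso star-prod diff op} identifies $\mathbb{A}(U)$ with $(\mathcal{O}_{X/\mathcal{P}}(U)[[t]],\star)$ for some star-product $\star$ whose first-order antisymmetric part is the standard Poisson bracket in the coordinates $p_i,q_i$. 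The problem then reduces to producing lifts $\widehat{p}_i=p_i+\sum_{k\geq 1}t^k p_i^{(k)}$ and $\widehat{q}_i=q_i+\sum_{k\geq 1}t^k q_i^{(k)}$ in $\mathcal{O}_{X/\mathcal{P}}(U)[[t]]$ that satisfy the prescribed $\star$-commutator relations.

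I would proceed by induction on $N$, the hypothesis being that the partial sums verify the canonical relations modulo $t^{N+2}$. The base case follows from the matching of the first-order antisymmetric part of $\star$ with the standard Darboux bracket. For the inductive step, write the $t^{N+2}$-coefficients of the three commutator defects as $A_{ij}, B_{ij}, C_{ij}$ and adjust $\widehat{p}_i, \widehat{q}_i$ by corrections of order $t^{N+1}$, say $t^{N+1}\alpha_i$ and $t^{N+1}\beta_i$. Computing commutators modulo $t^{N+3}$ reduces the question to a system of first-order linear PDEs in the Darboux coordinates expressing $A,B,C$ as linear combinations of partial derivatives of $\alpha_i, \beta_i$. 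The Jacobi identity for the $\star$-commutator, expanded at the appropriate order, forces $A,B,C$ to satisfy the cocycle conditions under which this system is precisely the closedness of a 2-form in the appropriate coordinate-subcomplex. The Poincar\'e lemma, applied in the $\mathcal{P}$-invariant $C^\infty$-setting from Appendix~\ref{appendix: distributions}, produces $\mathcal{P}$-invariant solutions $\alpha_i,\beta_i$ after possibly shrinking $U$ to a smaller contractible neighborhood, completing the inductive step. The $t$-adic limit then delivers the sought sections $\widehat{p}_i,\widehat{q}_i\in\mathbb{A}(U)$.

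The main technical obstacle is the careful bookkeeping needed to show that the Jacobi-derived compatibility conditions on $A,B,C$ are exactly the closedness conditions accessible to the Poincar\'e lemma, and to verify that solutions can be chosen $\mathcal{P}$-invariant; both hinge on the formalism of Appendix~\ref{appendix: distributions} and the technical condition \eqref{sufficient condition}. Once these identifications are in place, the iteration assembles without further difficulty.
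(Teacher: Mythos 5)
The paper itself offers no proof of this proposition: it is stated as a known fact (essentially \cite[Cor.~2.2.8]{KS} transported to the setting of an integrable distribution), and only the Corollary following it receives a one-line proof. So there is nothing in-paper to compare against, and I can only judge your argument on its own terms. Your strategy --- classical Darboux coordinates, local identification of $\mathbb{A}$ with a star-product, then an order-by-order correction of the lifts in which the obstruction at each stage is a closed $2$-form killed by the Poincar\'e lemma --- is the standard Deligne--Fedosov argument, and the skeleton is sound: the defects $A_{ij},B_{ij},C_{ij}$ do assemble into a $2$-form on $U$, its closedness is exactly the order-$t^{N+3}$ Jacobi identity, and a primitive of the shape $\sum_i(\alpha_i\,dq_i+\beta_i\,dp_i)$ yields corrections cancelling the defects. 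Via $\omega$ this is precisely exactness in degree $2$ of the Lichnerowicz--Poisson complex \eqref{Lichnerowicz-Poisson complex}, which the paper asserts satisfies the Poincar\'e lemma, so you are using exactly the toolkit the paper makes available. One small misattribution: the identification of $\mathbb{A}\vert_U$ with a star-product is the \emph{definition} of a DQ-algebra, not a consequence of Proposition~\ref{prop: iso star-prod diff op} (that proposition compares two such identifications). Also note the normalization: since commutators in a DQ-algebra land in $t\mathbb{A}$ and $[p_i,q_j]=t\{p_i,q_j\}+O(t^2)$, the relations you can actually achieve are $[\widehat p_i,\widehat q_j]=t\delta_{ij}$; the statement's $\delta_{ij}$ is a typo you should not try to reproduce literally.

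The one genuine defect is the phrase ``after possibly shrinking $U$ to a smaller contractible neighborhood'' \emph{inside} the inductive step. An infinite descending chain of shrinkings need not leave any open set, so as written the induction does not produce sections over a fixed neighborhood of $x$. The fix is standard but must be said: choose $U$ once and for all at the outset so that the complex $(\Omega^\bullet_{U/\mathcal{P}},d)$ is exact in degrees $1$ and $2$ on all of $U$ (in the $C^\infty$ case a contractible Darboux chart suffices; in general one uses the $\dbar$-acyclicity furnished by \eqref{sufficient condition} together with the filtered quasi-isomorphism $\Omega^\bullet_{X/\mathcal{P}}\hookrightarrow\Omega^\bullet_X$ of Appendix~\ref{appendix: distributions} to produce such a $U$), and then run every step of the induction on that single $U$. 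With that adjustment your argument is complete.
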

\begin{cor}\label{cor: props of DQ alg}
{\ }
\begin{enumerate}
\item Symplectic DQ-algebras with the same associated Poisson bracket are locally isomorphic.
\item The unit map $\mathbb{C}[[t]] \to \mathbb{A}$ is an isomorphism onto the center.
\item The sequence
\begin{equation}\label{ses derivations are inner}
0 \to \dfrac1t\mathbb{C}[[t]] \to \dfrac1t\mathbb{A} \xrightarrow{\ad} \Der_{\mathbb{C}[[t]]}(\mathbb{A}) \to 0
\end{equation}
is exact.
\item The sequence of groups
\begin{equation}\label{ses automorphisms are inner}
1 \to \exp(\mathbb{C}[[t]]) \to \exp(\mathbb{A}) \to \shAut(\mathbb{A}) \to 1
\end{equation}
is exact.
\end{enumerate}
\end{cor}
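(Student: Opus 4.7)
All four statements are local in nature, so the plan is to reduce each of them to a computation in the Moyal--Weyl model of Example \ref{example: MW} via the Quantum Darboux Lemma (Proposition \ref{prop: QDL}), together with Proposition \ref{prop: iso star-prod diff op} to promote compatible candidates to genuine algebra isomorphisms. For (1), given symplectic DQ-algebras $\mathbb{A}$ and $\mathbb{A}'$ with a common associated Poisson bracket, I choose Darboux coordinates $(p_i, q_i)$ on a sufficiently small open set, lift them via Proposition \ref{prop: QDL} to $(\widehat{p}_i, \widehat{q}_i) \in \mathbb{A}$ and $(\widehat{p}'_i, \widehat{q}'_i) \in \mathbb{A}'$ satisfying identical canonical relations, and use these lifts to produce compatible local star-product presentations of both algebras; matching the generators then defines an algebra morphism that is the identity modulo $t$, hence an isomorphism by Proposition \ref{prop: iso star-prod diff op}.

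For (2), I work in a Moyal--Weyl model (justified by (1)). A central element $c = \sum_i c_i t^i$ must satisfy $[c, f] = 0$ for every $f \in \mathcal{O}_{X/\mathcal{P}}$; extracting the coefficient of $t$ gives $\{c_0, f\} = 0$, so nondegeneracy of the Poisson bracket forces $c_0 \in \mathbb{C}$, and induction on $(c - c_0)/t$ finishes the argument. For (3), the kernel of $\ad$ on $\frac{1}{t}\mathbb{A}$ consists of those $a$ for which $ta$ is central, and by (2) such $ta$ lies in $\mathbb{C}[[t]]$. For surjectivity, the classical limit $\bar D$ of a $\mathbb{C}[[t]]$-linear derivation $D$ is a symplectic vector field, hence locally $\bar D = \{h_0, \cdot\}$ for some Hamiltonian $h_0 \in \mathcal{O}_{X/\mathcal{P}}$; the difference $D - \ad(h_0/t)$ is a derivation divisible by $t$, and iterating yields a $t$-adically convergent series $a = h_0/t + h_1 + h_2 t + \cdots \in \frac{1}{t}\mathbb{A}$ with $\ad(a) = D$.

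For (4), I exponentiate (3). By Proposition \ref{prop: iso star-prod diff op} every automorphism $\varphi$ of $\mathbb{A}$ has the form $\id + O(t)$ in a star-product presentation, so $\log \varphi$ converges $t$-adically to a $\mathbb{C}[[t]]$-linear derivation that raises the $t$-adic order by at least one; by (3) this derivation equals $\ad(a)$ for some $a \in \frac{1}{t}\mathbb{A}$, and the order-raising condition forces the polar coefficient of $a$ to lie in $\frac{1}{t}\mathbb{C}[[t]]$, which is central and may be absorbed so that $a \in \mathbb{A}$. Conversely, for $a \in \mathbb{A}$ the bound $\ad(a)^n(\mathbb{A}) \subseteq t^n\mathbb{A}$ makes $\exp(\ad(a))$ a well-defined automorphism, with kernel the center $\mathbb{C}[[t]]$ by (2); interpreting $\exp(\mathbb{A})$ and $\exp(\mathbb{C}[[t]])$ via the $t$-adically convergent Baker--Campbell--Hausdorff product then gives the claimed exact sequence of group-valued sheaves. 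The main technical obstacle is the surjectivity step of (3), which relies on a $\mathcal{P}$-adapted Poincar\'e lemma to produce the local Hamiltonian $h_0$ from the symplectic vector field $\bar D$, together with careful monitoring of the inductive construction to ensure convergence in $\frac{1}{t}\mathbb{A}$ rather than in a larger ring of Laurent series.
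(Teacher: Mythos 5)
Your proposal is correct and follows essentially the same route as the paper, whose own proof is just the one-line citation of Proposition \ref{prop: QDL} and ``well-known properties of the Moyal--Weyl star-product''; your reductions to the Moyal--Weyl model, the order-by-order induction for the center and for surjectivity of $\ad$, and the $t$-adic $\log$/$\exp$ argument for (4) are exactly the standard details being alluded to. The only step you leave at the same level of vagueness as the paper is the passage in (1) from the existence of canonical-commutation lifts $\widehat{p}_i,\widehat{q}_i$ to an actual isomorphism with the Moyal--Weyl presentation (``matching the generators''), which is precisely the well-known property of the Weyl algebra that the authors also take for granted.
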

In \eqref{ses automorphisms are inner}, $\exp(\mathbb{A})$ is the pro-unipotent group associated to the pro-nilpotent Lie algebra $\mathbb{A}$ equipped with the commutator bracket, and the map $\exp(\mathbb{A}) \to \shAut(\mathbb{A})$ is given by $``\exp"(a) \mapsto \exp(\ad(a))$.

\begin{proof}
Follows from Proposition \ref{prop: QDL} and well-known properties of the Moyal-Weyl star-product.
\end{proof}

\subsection{Dilation equivariance structures}
Suppose that $\mathbb{A}$ is a symplectic DQ-algebra. The inclusion $\mathbb{C}[[t]] \to \mathbb{A}$ is injective onto the center, so that there is a canonical isomorphism $\mathbb{C}[[t]] \cong \zentrum(\mathbb{A})$. The "restriction to the center" gives rise to the map
\[
(\cdot)\vert_{\zentrum(\mathbb{A})} \colon \Der_\mathbb{C}(\mathbb{A}) \to \Der_\mathbb{C}(\mathbb{C}[[t]]) .
\]
In what follows, we denote by $\partial_t$ the derivation $\dfrac{d~}{dt}$.

\lemma\label{lemma: des exist}
Locally on $X$ there exists $D \in \Der_\mathbb{C}(\mathbb{A})$ such that $D\vert_{\zentrum(\mathbb{A})} = t\partial_t$.
\endlemma
\begin{proof}
Since the question is local, it is sufficient to consider the case of the Moyal-Weyl product described in Example \ref{example: MW}. The natural action of the vector field $\dfrac12\sum\limits_{i=1}^n (p_i\dfrac{\partial\ }{\partial p_i} + q_i\dfrac{\partial\ }{\partial q_i})$ on $\mathcal{O}_{X/\mathcal{P}}(U)$ is easily seen to be a derivation of the Moyal-Weyl product. Hence, $D = t\partial_t + \dfrac12\sum\limits_{i=1}^n (p_i\dfrac{\partial\ }{\partial p_i} + q_i\dfrac{\partial\ }{\partial q_i})$ has the required properties.
\end{proof}
Let $\mathfrak{l}$ denote the $\mathbb{C}$-submodule of $\Der_\mathbb{C}(\mathbb{C}[[t]])$ locally generated by the derivation $t\partial_t$. We define a subsheaf $\mathfrak{L}(\mathbb{A})$ of $\Der_\mathbb{C}(\mathbb{A})$ by the pull-back square
\[
\begin{CD}
\mathfrak{L}(\mathbb{A}) @>>> \mathfrak{l} \\
@VVV @VVV \\
\Der_\mathbb{C}(\mathbb{A}) @>{(\cdot)\vert_{\zentrum(\mathbb{A})}}>> \Der_\mathbb{C}(\mathbb{C}[[t]])
\end{CD}
\]
\begin{cor}
The sequence
\begin{equation}\label{ses extended derivations}
0 \to \Der_{\mathbb{C}[[t]]}(\mathbb{A}) \to \mathfrak{L}(\mathbb{A}) \xrightarrow{(\cdot)\vert_{\zentrum(\mathbb{A})}} \mathfrak{l} \to 0
\end{equation}
is exact.
\end{cor}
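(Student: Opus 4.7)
The exactness of the sequence follows essentially formally from the pullback definition of $\mathfrak{L}(\mathbb{A})$, combined with Lemma \ref{lemma: des exist}. I would organize the argument in two steps.

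First, I would identify the kernel of the right-hand map. Every $\mathbb{C}$-derivation $D$ of $\mathbb{A}$ preserves the center $\zentrum(\mathbb{A})\cong \mathbb{C}[[t]]$, since for $z\in \zentrum(\mathbb{A})$ and any $a\in\mathbb{A}$ one has $[D(z),a] = D[z,a] - [z,D(a)] = 0$; so the restriction map $(\cdot)|_{\zentrum(\mathbb{A})}$ is well defined. A section $D\in\mathfrak{L}(\mathbb{A})$ lies in the kernel of $(\cdot)|_{\zentrum(\mathbb{A})}$ precisely when $D|_{\zentrum(\mathbb{A})}=0\in\mathfrak{l}$, i.e.\ when $D(t)=0$. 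By the Leibniz rule this forces $D(t^n) = 0$ for all $n$, and by $t$-adic continuity $D$ annihilates all of $\mathbb{C}[[t]]$; equivalently $D$ is $\mathbb{C}[[t]]$-linear. Conversely, every $\mathbb{C}[[t]]$-linear derivation of $\mathbb{A}$ restricts to $0$ on $\zentrum(\mathbb{A})$ and so defines an element of $\mathfrak{L}(\mathbb{A})$ mapping to $0$. This identifies the kernel with $\Der_{\mathbb{C}[[t]]}(\mathbb{A})$ and, in particular, shows that the leftmost map is an inclusion.

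For the surjectivity of $(\cdot)|_{\zentrum(\mathbb{A})}\colon \mathfrak{L}(\mathbb{A})\to \mathfrak{l}$, the statement is local on $X$. Since $\mathfrak{l}$ is by definition locally generated over $\mathbb{C}$ by the single derivation $t\partial_t$, it is enough to produce, on a neighbourhood of every point, a derivation $D\in \Der_\mathbb{C}(\mathbb{A})$ restricting to $t\partial_t$ on the center; any $\mathbb{C}$-multiple of such a $D$ then lifts the corresponding multiple of $t\partial_t$. Such a local $D$ is furnished directly by Lemma \ref{lemma: des exist}.

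In short, the entire content of the corollary is already packaged into Lemma \ref{lemma: des exist}; I do not anticipate any genuine obstacle, only a careful bookkeeping of the kernel computation (where the only subtlety is the use of $t$-adic continuity to pass from $D(t)=0$ to $\mathbb{C}[[t]]$-linearity).
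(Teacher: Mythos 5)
Your argument is correct and is exactly the one the paper intends: the corollary is stated without proof precisely because, as you observe, it follows formally from the pullback definition of $\mathfrak{L}(\mathbb{A})$ (kernel $=$ the $\mathbb{C}[[t]]$-linear derivations) together with Lemma \ref{lemma: des exist} for local surjectivity onto $\mathfrak{l}$. One small simplification: since $D\vert_{\zentrum(\mathbb{A})}$ already lies in $\mathfrak{l}$ and is therefore of the form $\lambda\cdot t\partial_t$, the condition $D(t)=0$ forces $\lambda=0$ directly, so the $t$-adic continuity step is not needed.
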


The extension \eqref{ses extended derivations} spliced with the extension \eqref{ses derivations are inner} gives rise to the exact sequence
\begin{equation}\label{spliced extensions}
0 \to \dfrac1t\mathbb{C}[[t]] \to \dfrac1t\mathbb{A} \xrightarrow{\ad} \mathfrak{L}(\mathbb{A}) \xrightarrow{(\cdot)\vert_{\zentrum(\mathbb{A})}} \mathfrak{l} \to 0
\end{equation}
Since the extension \eqref{ses extended derivations} is split locally on $X$, the exact sequence \eqref{spliced extensions} gives rise to a $\shHom(\mathfrak{l},\dfrac1t\mathbb{C}[[t]])[1] \cong \dfrac1t\mathbb{C}[[t]][1]$-torsor (see \ref{subsection: Extensions and 2-torsors}) which we denote by $\shDES(\mathbb{A})$. 

A \emph{dilation equivariance structure (DES)} on (the symplectic DQ-algebra) $\mathbb{A}$ is a an object $\widetilde{\mathfrak{L}} \in \shDES(\mathbb{A})$. Explicitly, a DES on $\mathbb{A}$ is an extension $\widetilde{\mathfrak{L}}$ of $\mathfrak{l}$ by $\dfrac1t\mathbb{A}$  which lifts $\mathfrak{L}$, i.e. there is a commutative diagram of sheaves of vector spaces
\begin{equation}\label{dilation equivariance structure}
\begin{CD}
& & 0 & & 0 \\
& & @VVV @VVV \\
0 @>>> \dfrac1t\mathbb{C}[[t]] @= \dfrac1t\mathbb{C}[[t]] @>>> 0\\
& & @VVV @VVV @VVV\\
0 @>>> \dfrac1t\mathbb{A} @>>> \widetilde{\mathfrak{L}} @>>> \mathfrak{l} @>>> 0 \\
& & @V{\ad}VV @VVV @| \\
0 @>>> \Der_{\mathbb{C}[[t]]}(\mathbb{A}) @>>> \mathfrak{L}(\mathbb{A}) @>{(\cdot)\vert_{\zentrum(\mathbb{A})}}>> \mathfrak{l} @>>> 0 \\
& & @VVV @VVV @VVV \\
& & 0 & & 0 & & 0
\end{CD}
\end{equation}
with exact rows and columns.

\subsection{The canonical bracket on DES}
A DES $\widetilde{\mathfrak{L}}$ on $\mathbb{A}$ admits a canonical structure of a sheaf of Lie algebras. Let $D\in\widetilde{\mathfrak{L}}$ denote a locally defined section which projects to $t\partial_t$. Then, every section of $\widetilde{\mathfrak{L}}$ is of the form $a+\lambda\cdot D$, $a\in\dfrac1t\mathbb{A}$, $\lambda \in \mathbb{C}$. The bracket on $\widetilde{\mathfrak{L}}$ is defined by
\begin{equation}\label{canonical bracket}
[a_1+\lambda_1\cdot D,a_2+\lambda_2\cdot D] = [a_1,a_2] + \lambda_1\cdot\overline{D}(a_2) - \lambda_2\cdot\overline{D}(a_2) ,
\end{equation}
where $\overline{D}$ is the image of $D$ in $\mathfrak{L}(\mathbb{A})$.

\lemma
The bracket \eqref{canonical bracket} is independent of the choice of $D$.
\endlemma
\proof
Indeed, any other choice is of the form $D+b$ with $b\in\dfrac1t\mathbb{A}$. Then, $a_i+\lambda_i\cdot D = (a_i-\lambda_i b) + \lambda_i\cdot(D+b)$, $i = 1,2$, and
\begin{multline*}
[a_1-\lambda_1 b, a_2-\lambda_2 b] + \lambda_1\cdot\overline{(D+b)}(a_2-\lambda_1 b) - \lambda_2\cdot\overline{(D+b)}(a_2-\lambda_2 b) \\ = [a_1,a_2] + \lambda_1\cdot\overline{D}(a_2) - \lambda_2\cdot\overline{D}(a_2)  .
\end{multline*}
\endproof

Since the map $\widetilde{\mathfrak{L}} \to \mathfrak{l}$ admits a section locally on $X$, it follows that the Lie algebra structure described above is well-defined. We shall refer to the above Lie algebra structure as \emph{canonical}.

\begin{prop}
{\ }
\begin{enumerate}
\item The canonical bracket endows a DES with a structure of a Lie algebra.
\item All maps in the diagram \eqref{dilation equivariance structure} are morphisms of Lie algebras.
\item A morphism of DES is a morphism of Lie algebras with respect to the canonical brackets.
\end{enumerate}
\end{prop}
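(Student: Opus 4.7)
The plan is to verify all three statements via the explicit formula \eqref{canonical bracket}, working locally. For part (1), choose locally a section $D \in \widetilde{\mathfrak{L}}$ projecting to $t\partial_t$, which exists because \eqref{ses extended derivations} splits locally. This yields a $\mathbb{C}$-linear decomposition $\widetilde{\mathfrak{L}} = \dfrac{1}{t}\mathbb{A} \oplus \mathbb{C}\cdot D$, and \eqref{canonical bracket} then exhibits $\widetilde{\mathfrak{L}}$ locally as the semi-direct product of the Lie algebra $\dfrac{1}{t}\mathbb{A}$ (a Lie algebra via Corollary \ref{cor: props of DQ alg}(3)) with the abelian line $\mathbb{C}\cdot D$ acting on $\dfrac{1}{t}\mathbb{A}$ by the derivation $\overline{D}\in\mathfrak{L}(\mathbb{A})$. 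Antisymmetry of the resulting bracket is immediate, and the Jacobi identity reduces to Jacobi in $\dfrac{1}{t}\mathbb{A}$ together with the derivation property of $\overline{D}$. The preceding lemma ensures independence from the choice of $D$, so these local Lie structures glue to a canonical global Lie bracket on $\widetilde{\mathfrak{L}}$.

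For part (2), each arrow in \eqref{dilation equivariance structure} is checked by a direct computation using \eqref{canonical bracket}. The inclusion $\dfrac{1}{t}\mathbb{A} \hookrightarrow \widetilde{\mathfrak{L}}$ corresponds to $\lambda_i=0$, so the bracket restricts to the commutator in $\dfrac{1}{t}\mathbb{A}$. The projection $\widetilde{\mathfrak{L}} \to \mathfrak{l}$ annihilates every bracket, matching the abelian structure on $\mathfrak{l}$. The central inclusion of $\dfrac{1}{t}\mathbb{C}[[t]]$ into $\dfrac{1}{t}\mathbb{A}$ and into $\widetilde{\mathfrak{L}}$ is automatic. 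The middle vertical $\widetilde{\mathfrak{L}} \to \mathfrak{L}(\mathbb{A})$ sends $a + \lambda\cdot D$ to $\ad(a) + \lambda\cdot\overline{D}$, and the identity $[\overline{D}, \ad(a)] = \ad(\overline{D}(a))$, valid because $\overline{D}$ is a derivation of $\mathbb{A}$, shows that this map preserves brackets. The remaining arrows are inclusions of Lie subalgebras for which the claim is immediate.

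For part (3), a morphism of DES is, by definition, an isomorphism $\phi\colon \widetilde{\mathfrak{L}} \to \widetilde{\mathfrak{L}}'$ of extensions of $\mathfrak{l}$ by $\dfrac{1}{t}\mathbb{A}$ compatible with the middle column of \eqref{dilation equivariance structure}; in particular $\phi$ restricts to the identity on $\dfrac{1}{t}\mathbb{A}$ and covers the identities on $\mathfrak{l}$ and on $\mathfrak{L}(\mathbb{A})$. For any local lift $D$ of $t\partial_t$ in $\widetilde{\mathfrak{L}}$, the element $\phi(D)\in\widetilde{\mathfrak{L}}'$ is a local lift whose image in $\mathfrak{L}(\mathbb{A})$ equals $\overline{D}$, so the formulas \eqref{canonical bracket} computed with $D$ on the left and with $\phi(D)$ on the right coincide, and $\phi$ preserves brackets. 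No step presents a serious obstacle; the main conceptual point is recognizing the semi-direct product structure underlying \eqref{canonical bracket}, which makes Jacobi formal and reduces everything else to bookkeeping.
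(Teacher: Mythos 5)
Your proof is correct, and it supplies exactly what the paper omits: the stated proof is ``Direct calculation left to the reader,'' and your semidirect-product reading of \eqref{canonical bracket} (with $\mathbb{C}\cdot D$ acting on the Lie algebra $\frac1t\mathbb{A}$ through the derivation $\overline{D}$) is the cleanest way to organize that calculation, making antisymmetry and Jacobi formal and reducing part (2) to the identity $[\overline{D},\ad(a)]=\ad(\overline{D}(a))$ and part (3) to the fact that a morphism of DES covers the identity on $\mathfrak{L}(\mathbb{A})$. Two small remarks: you are implicitly using the antisymmetric formula $[a_1+\lambda_1 D,a_2+\lambda_2 D]=[a_1,a_2]+\lambda_1\overline{D}(a_2)-\lambda_2\overline{D}(a_1)$, whereas \eqref{canonical bracket} as printed has $\overline{D}(a_2)$ in both terms --- that is evidently a typo in the paper, and your reading is the only one consistent with antisymmetry, but it is worth stating explicitly that you are correcting it. Also, calling the inclusion of $\frac1t\mathbb{C}[[t]]$ ``central'' is slightly misleading: $\frac1t\mathbb{C}[[t]]$ is central in $\frac1t\mathbb{A}$ but not in $\widetilde{\mathfrak{L}}$, since $\overline{D}$ restricts to $t\partial_t$ there; what you actually need, and what holds, is only that the bracket of $\widetilde{\mathfrak{L}}$ vanishes on this abelian subalgebra, so the map is a Lie algebra morphism.
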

\proof
Direct calculation left to the reader.
\endproof

\subsection{The canonical action of units}\label{subsection: The canonical action of units}
Suppose that $\mathbb{A}$ is a symplectic DQ-algebra and $\widetilde{\mathfrak{L}}$ is a DES on $\mathbb{A}$. An automorphism $\phi \colon \mathbb{A} \to \mathbb{A}$ induces a Lie algebra automorphism $\Conj(\phi) \colon \Der_\mathbb{C}(\mathbb{A}) \to \Der_\mathbb{C}(\mathbb{A})$ defined by $D \mapsto \phi\circ D\circ\phi^{-1}$. The automorphism $\Conj(\phi)$ preserves the subalgebras $\Der_{\mathbb{C}[[t]]}(\mathbb{A})$ and $\mathfrak{L}(\mathbb{A})$ and induces the trivial automorphism on $\mathfrak{l}$.

The group $\mathbb{A}^\times$ of units (invertible elements of $\mathbb{A}$) acts on the latter by conjugation; for $u\in\mathbb{A}^\times$ we denote by $\Conj(u)$ the corresponding automorphism of $\mathbb{A}$. 

\lemma
Suppose that $\widetilde{\mathfrak{L}}$ is a DES on $\mathbb{A}$ and $u\in\mathbb{A}^\times$.
\begin{enumerate}
\item The automorphism of $\Der_\mathbb{C}(\mathbb{A})$ induced by $\Conj(u)$ is given by $D \mapsto D - \ad(D(u)\cdot u^{-1})$.

\item The formula
\begin{equation}\label{canonical action of units on DES}
D \mapsto D - [D,u]\cdot u^{-1} = D - \overline{D}(u)\cdot u^{-1} ,
\end{equation}
where $u\in\mathbb{A}^\times$ and $\overline{D}$ is the image of $D$ in $\mathfrak{L}(\mathbb{A})$, defines an action of $\mathbb{A}^\times$ on $\widetilde{\mathfrak{L}}$.

\item The maps $\dfrac1t\mathbb{A} \to \widetilde{\mathfrak{L}} \to \mathfrak{L}(\mathbb{A})$ are $\mathbb{A}^\times$-equivariant.
\end{enumerate}
\endlemma
\begin{proof}
Direct calculation left to the reader.
\end{proof}

We shall refer to the action defined by \eqref{canonical action of units on DES} as \emph{canonical}.

\subsection{Push-forward of DES}\label{subsection: DES pushforward algebras}
Suppose that $\phi \colon \mathbb{A}_1 \to \mathbb{A}_0$ is a morphism of symplectic DQ-algebras. The morphism $\phi$ restricts to the identity map between the respective centers identified with $\mathbb{C}[[t]]$ and induces the morphism of Lie algebras $\Conj(\phi) \colon \Der_\mathbb{C}(\mathbb{A}_1) \to \Der_\mathbb{C}(\mathbb{A}_0)$ defined by $D \mapsto \phi\circ D\circ\phi^{-1}$. Therefore, $\Conj(\phi)$ gives rise to the commutative diagram
\[
\begin{CD}
0 @>>> \dfrac1t\mathbb{C}[[t]] @>>> \dfrac1t\mathbb{A}_1 @>{\ad}>> \mathfrak{L}(\mathbb{A}_1) @>>> \mathfrak{l} @>>> 0 \\ 
& & @| @V{\phi}VV @V{\Conj(\phi)}VV @| \\
0 @>>> \dfrac1t\mathbb{C}[[t]] @>>> \dfrac1t\mathbb{A}_0 @>{\ad}>> \mathfrak{L}(\mathbb{A}_0) @>>> \mathfrak{l} @>>> 0
\end{CD}
\]
and, hence, a morphism of $\shHom(\mathfrak{l},\dfrac1t\mathbb{C}[[t]])[1] \cong \dfrac1t\mathbb{C}[[t]][1]$-torsors
\begin{equation}\label{DES pushforward}
\phi_* \colon \shDES(\mathbb{A}_1) \to \shDES(\mathbb{A}_0)
\end{equation}

\section{Symplectic DQ-algebroids} \label{section: Symplectic DQ-algebroids}


\begin{definition}[{\cite[Definition 2.3.1]{KS}}]
A \emph{DQ-algebroid} $\mathcal{C}$ is a $\mathbb{C}[[t]]$-algebroid such that for each open set $U\subseteq X$ with $\mathcal{C}(U) \neq \varnothing$ and any $L\in \mathcal{C}(U)$ the $\mathbb{C}[[t]]$-algebra $\shEnd_\mathcal{C}(L)$ is a DQ-algebra on $U$.
\end{definition}

In other words, a DQ-algebroid is a $\mathbb{C}[[t]]$-algebroid locally equivalent to a star-product.

\begin{prop}[{\cite[Proposition 7.3]{QCL}}]
There exists a unique Poisson bracket
\begin{equation*}
\{\cdot,\cdot\}^\mathcal{C} \colon \mathcal{O}_{X/\mathcal{P}}\otimes\mathcal{O}_{X/\mathcal{P}} \to \mathcal{O}_{X/\mathcal{P}}
\end{equation*}
such that for any $U\subset X$ with $\mathcal{C}(U)\neq\varnothing$ and any $L\in\mathcal{C}(U)$ the restriction of $\{\cdot,\cdot\}^\mathcal{C}$ to $U$ coincides with the Poisson bracket associated to the DQ-algebra $\shEnd_\mathcal{C}(L)$.
\end{prop}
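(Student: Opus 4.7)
The plan is to construct $\{\cdot,\cdot\}^\mathcal{C}$ by gluing the Poisson brackets associated to the local DQ-algebras $\shEnd_\mathcal{C}(L)$. First I would choose a cover $\{U_i\}$ of $X$ together with objects $L_i \in \mathcal{C}(U_i)$; such a cover exists because a DQ-algebroid is by definition locally equivalent to a star-product and is therefore locally non-empty. For each $i$, the DQ-algebra $\shEnd_\mathcal{C}(L_i)$ gives rise, by the construction recalled in the preceding subsection, to a Poisson bracket $\{\cdot,\cdot\}_i$ on $\mathcal{O}_{X/\mathcal{P}}\vert_{U_i}$.

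The heart of the argument is to check compatibility on double overlaps: namely, that $\{\cdot,\cdot\}_i$ and $\{\cdot,\cdot\}_j$ agree on $U_i \cap U_j$. Because $\mathcal{C}$ is an algebroid, the objects $L_i\vert_{U_i \cap U_j}$ and $L_j\vert_{U_i \cap U_j}$ are locally isomorphic; on any open set $V \subseteq U_i \cap U_j$ over which a $\mathcal{C}$-isomorphism $L_i \cong L_j$ exists, it induces an isomorphism of $\mathbb{C}[[t]]$-algebras $\shEnd_\mathcal{C}(L_i)\vert_V \cong \shEnd_\mathcal{C}(L_j)\vert_V$. Lemma \ref{lemma: isom dq same poisson} then forces $\{\cdot,\cdot\}_i\vert_V = \{\cdot,\cdot\}_j\vert_V$. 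Covering $U_i \cap U_j$ by such $V$'s and invoking the sheaf property of the bracket yields equality on $U_i \cap U_j$.

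With compatibility established, the $\{\cdot,\cdot\}_i$ glue to a global $\mathbb{C}$-bilinear map $\{\cdot,\cdot\}^\mathcal{C} \colon \mathcal{O}_{X/\mathcal{P}} \otimes \mathcal{O}_{X/\mathcal{P}} \to \mathcal{O}_{X/\mathcal{P}}$. Skew-symmetry, the Leibniz rule, and the Jacobi identity are local conditions and each of them holds on every $U_i$ because $\{\cdot,\cdot\}_i$ is the Poisson bracket of a DQ-algebra, so they hold globally. To verify the stated compatibility for an arbitrary $U \subseteq X$ and an arbitrary $L \in \mathcal{C}(U)$, I would apply exactly the same local-isomorphism argument: on a refinement of $\{U \cap U_i\}_i$ on which $L$ and $L_i$ become isomorphic in $\mathcal{C}$, the associated Poisson brackets coincide by Lemma \ref{lemma: isom dq same poisson}, so the bracket coming from $\shEnd_\mathcal{C}(L)$ agrees with $\{\cdot,\cdot\}^\mathcal{C}\vert_U$. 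Uniqueness is automatic, since any bracket with the stated property is pinned down on each $U_i$ by $\{\cdot,\cdot\}_i$ and the $U_i$ cover $X$.

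The only nontrivial ingredient is Lemma \ref{lemma: isom dq same poisson}; the rest is a standard descent argument exploiting the fact that in an algebroid any two local objects are, by definition, locally isomorphic. I do not anticipate any serious obstacle beyond keeping track of refinements, so this is really a bookkeeping argument rather than a substantive proof.
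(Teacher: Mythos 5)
Your argument is correct: the paper itself gives no proof, deferring to \cite[Proposition 7.3]{QCL}, and your descent argument --- local existence of objects because $i\mathcal{C}$ is a gerbe, agreement on overlaps via local isomorphisms of objects together with Lemma \ref{lemma: isom dq same poisson}, and gluing since the bracket is a morphism of sheaves --- is exactly the expected route. No gaps; the only ingredient beyond bookkeeping is indeed Lemma \ref{lemma: isom dq same poisson}, which you invoke correctly.
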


\begin{notation}
We denote by $\pi^\mathcal{C} \in \Gamma(X;\bigwedge^2\mathcal{T}_{X/\mathcal{P}})$ the Poisson bi-vector which corresponds to $\{\cdot,\cdot\}^\mathcal{C}$.
\end{notation}

The assignment $\mathcal{C} \mapsto \pi^\mathcal{C}$ give rise to the canonical morphism
\begin{equation}\label{map: assoc poisson}
\DQ_{X/\mathcal{P}} \to \Lambda^2\mathcal{T}_{X/\mathcal{P}}
\end{equation}
For $\pi\in\Gamma(X;\Lambda^2\mathcal{T}_{X/\mathcal{P}})$ we denote by $\DQ^\pi_{X/\mathcal{P}}$ the fiber of \eqref{map: assoc poisson} over $\pi$.

\subsection{Symplectic DQ-algebroids}
If $\omega\in\Gamma(X;\Omega^{2,cl}_{X/\mathcal{P}})$ is a symplectic form and $\pi = \omega^{-1}$ we shall denote by $\DQ^\omega_{X/\mathcal{P}}$ the 2-stack $\DQ^\pi_{X/\mathcal{P}}$.

\begin{example}\label{ex:MW defines a sympl DQ-algebroid}
	Let $(X, \omega)$ be a symplectic manifold. Suppose $U \subset X$ is an open subset which admits coordinates satisfying canonical relations as in Example \ref{example: MW}. We denote by $\mathbb{A}_{MW} := (\mathcal{O}_{U/\mathcal{P}}[[t]], \star)$ the corresponding Moyal-Weyl algebra. Then $\mathbb{A}_{MW}^+$ (see Appendix \ref{subsection: algebroids}) is an object of $\DQ^\omega_{X/\mathcal{P}}(U)$.
\end{example}
 
\begin{prop}\label{prop: symp dq 2-gerbe}
{~}
\begin{enumerate}
\item For any symplectic DQ-algebroid $\mathcal{C}$ the canonical morphism $\mathbb{C}[[t]] \to \zentrum(\mathcal{C})$ is an isomorphism.

\item For any symplectic DQ-algebroid $\mathcal{C}$ the canonical morphism (see Appendix \ref{subsection: algebroids}) $\left.\mathbb{C}[[t]]\right.^\times[1] \to \shAut_{\DQ^\omega_{X/\mathcal{P}}}(\mathcal{C})$ and the morphism $\shAut_{\DQ^\omega_{X/\mathcal{P}}}(\mathcal{C}) \to \left.\mathbb{C}[[t]]\right.^\times[1] \colon F \mapsto \shHom_{\shAut(\mathcal{C})}(\id, F)$ are mutually quasi-inverse monoidal equivalences.

\item The 2-stack $\DQ^\omega_{X/\mathcal{P}}$ is a $\mathbb{C}[[t]]^\times[1]$-gerbe (see Appendix \ref{subsection: 2-torsors}).
\end{enumerate}
\end{prop}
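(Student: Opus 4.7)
The plan is to reduce each of the three assertions to a local statement about the endomorphism DQ-algebra of a locally chosen object and then to invoke Corollary \ref{cor: props of DQ alg}. By the definition of a DQ-algebroid, each point of $X$ has a neighborhood $U$ equipped with an object $L\in\mathcal{C}(U)$; writing $\mathbb{A}=\shEnd_\mathcal{C}(L)$, the restriction $\mathcal{C}|_U$ is equivalent to the linearization $\mathbb{A}^+$, and $\mathcal{C}$ is symplectic if and only if $\mathbb{A}$ is.

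For (1), the center $\zentrum(\mathcal{C})$ of the algebroid coincides over $U$ with the center $\zentrum(\mathbb{A})$ of $\mathbb{A}$, and Corollary \ref{cor: props of DQ alg}(2) identifies the latter with $\mathbb{C}[[t]]$ via the unit map; this local identification immediately globalizes. For (2), we first show that every autoequivalence $F$ of $\mathcal{C}|_U$ is isomorphic to $\id$ after possibly shrinking $U$. The endomorphism algebras of $L$ and $F(L)$ are symplectic DQ-algebras with the same associated Poisson bracket $\omega^{-1}$, so by Corollary \ref{cor: props of DQ alg}(1) they are locally isomorphic, and consequently $L$ and $F(L)$ become isomorphic in $\mathcal{C}$ after further shrinking $U$. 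Fixing an isomorphism $\phi\colon L\to F(L)$, the composition $\shEnd_\mathcal{C}(L)\xrightarrow{F}\shEnd_\mathcal{C}(F(L))\xrightarrow{\Conj(\phi^{-1})}\shEnd_\mathcal{C}(L)$ is an algebra automorphism of $\mathbb{A}$, which by Corollary \ref{cor: props of DQ alg}(4) is inner. Correcting $\phi$ by the corresponding unit of $\mathbb{A}^\times$ yields a $2$-isomorphism $F\simeq\id$. Combined with (1), this shows that $\shAut_{\DQ^\omega_{X/\mathcal{P}}}(\mathcal{C})$ is locally equivalent, as a monoidal stack, to the gerbe of $\mathbb{C}[[t]]^\times$-torsors; the two functors displayed in the statement are then verified to be mutually quasi-inverse by direct unwinding of the definitions.

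For (3), local non-emptiness of $\DQ^\omega_{X/\mathcal{P}}$ is provided by Example \ref{ex:MW defines a sympl DQ-algebroid} together with the classical Darboux lemma applied to $\omega$, and the argument in (2) further supplies local equivalence between any two locally defined sections with the same associated symplectic form. Combined with the computation of the $2$-group of autoequivalences in (2), these are precisely the conditions defining a $\mathbb{C}[[t]]^\times[1]$-gerbe. The principal technical point is the trivialization step in (2): passing from the algebra-level innerness given by Corollary \ref{cor: props of DQ alg}(4) to an actual $2$-isomorphism of autoequivalences of the algebroid, which requires the preliminary fact that $L$ and $F(L)$ become isomorphic as objects of $\mathcal{C}$. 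Once this trivialization is in place, the remainder is routine bookkeeping within the abelian gerbe formalism recalled in Appendix \ref{appendix: algebroid stacks}.
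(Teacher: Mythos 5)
Your proposal is correct and follows essentially the same route as the paper: reduce locally to $\mathbb{A}^+$ for a symplectic DQ-algebra $\mathbb{A}$ and invoke Corollary \ref{cor: props of DQ alg} (parts (2), (4) and (1) for the three assertions, respectively), with your treatment of (2) simply spelling out what the paper leaves implicit. One small repair: the step ``the endomorphism algebras of $L$ and $F(L)$ are locally isomorphic, consequently $L$ and $F(L)$ become isomorphic in $\mathcal{C}$'' is not a valid inference (an isomorphism of endomorphism algebras does not by itself produce an isomorphism of objects); the local isomorphism $L \cong F(L)$ is instead immediate from the definition of an algebroid, namely that $i\mathcal{C}$ is a gerbe and hence locally connected, after which your correction of $\phi$ by a unit of $\mathbb{A}^\times$ is exactly the right move.
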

\begin{proof}
Recall that a symplectic DQ-algebroid $\mathcal{C}$ is locally equivalent to $\mathbb{A}^+$, where $\mathbb{A}$ is a symplectic DQ-algebra.
\begin{enumerate}
\item The problem is local on $X$. Therefore, $\zentrum(\mathcal{C}) \cong \zentrum(\mathbb{A})$ and the claim follows from (2) of  Corollary \ref{cor: props of DQ alg}.

\item The claim follows from the fact that all automorphisms of $\mathbb{A}$ are inner by (4) of Corollary \ref{cor: props of DQ alg}. 

\item In view of the equivalence $\shAut(\mathcal{C}) \to \left.\mathbb{C}[[t]]\right.^\times[1]$ it remains to show that $\DQ^\omega_{X/\mathcal{P}}$ is locally non-empty and locally-connected. Example \ref{ex:MW defines a sympl DQ-algebroid} shows that $\DQ^\omega_{X/\mathcal{P}}$ is locally non-empty. Local connectedness follows from (1) of Corollary \ref{cor: props of DQ alg}.

\end{enumerate}
\end{proof}


\subsection{DES enhancement}\label{subsection: DES enhancement}
Suppose that $\mathcal{C}$ is a symplectic DQ-algebroid. 

For $L_i \in \mathcal{C}$, $\widetilde{\mathfrak{L}}_i \in \DES(\shEnd_\mathcal{C}(L_i))$, $i=0,1$, a morphism $(L_1,\widetilde{\mathfrak{L}}_1) \to (L_0,\widetilde{\mathfrak{L}}_0)$ is a pair $(f,s)$, where $f \colon L_1 \to L_0$ is an isomorphism in $\mathcal{C}$ and $s \colon \widetilde{\mathfrak{L}}_1 \to \widetilde{\mathfrak{L}}_0$ is a morphism of sheaves of $k$-vector spaces such that the diagrams
\[
\begin{CD}
\dfrac1t\shEnd_\mathcal{C}(L_1) @>>> \widetilde{\mathfrak{L}}_1 @>>> \mathfrak{l} \\
@V{\Conj(f)}VV @V{s}VV @| \\
\dfrac1t\shEnd_\mathcal{C}(L_0) @>>> \widetilde{\mathfrak{L}}_0 @>>> \mathfrak{l}
\end{CD}
\]
and
\[
\begin{CD}
\dfrac1t\mathbb{C}[[t]] @>>>  \widetilde{\mathfrak{L}}_1 @>>> \mathfrak{L}(\shEnd_\mathcal{C}(L_1)) \\
@| @V{s}VV @V{\Conj(f)}VV \\
\dfrac1t\mathbb{C}[[t]] @>>>  \widetilde{\mathfrak{L}}_0 @>>> \mathfrak{L}(\shEnd_\mathcal{C}(L_0))
\end{CD}
\]
are commutative. Composition of morphisms is defined in the obvious way. 

We denote by $\widetilde{\mathcal{C}}$ the stack with (locally defined) objects pairs $(L,\widetilde{\mathfrak{L}})$, $L \in \mathcal{C}$, $\widetilde{\mathfrak{L}} \in \DES(\shEnd_\mathcal{C}(L))$ and morphisms defined as above. The assignment $(L,\widetilde{\mathfrak{L}}) \mapsto L$ extends to morphism
\[
p_\mathcal{C} \colon \widetilde{\mathcal{C}} \to i\mathcal{C}
\]
which makes $\widetilde{\mathcal{C}}$ a category cofibered in $\shgpdExt^1(\mathfrak{l},\dfrac1t\mathbb{C}[[t]])$-torsors over $i\mathcal{C}$.

\subsection{Functoriality of the DES enhancement}\label{subsection: Functoriality of the DES enhancement}
Suppose that $F \colon \mathcal{C}_1 \to \mathcal{C}_0$ is a 1-morphism of symplectic DQ-algebroids. For $L \in \mathcal{C}_1$, the morphism of DQ-algebras
\[
F \colon \shEnd_{\mathcal{C}_1}(L) \to \shEnd_{\mathcal{C}_0}(F(L))
\]
gives rise to the morphism of stacks
\[
F_* \colon \shDES(\shEnd_{\mathcal{C}_1}(L)) \to \shDES(\shEnd_{\mathcal{C}_0}(F(L))) \ .
\]
as in \ref{subsection: DES pushforward algebras}.

The assignment $(L,\widetilde{\mathfrak{L}}) \mapsto \widetilde{F}(L,\widetilde{\mathfrak{L}}) := (F(L), F_*\widetilde{\mathfrak{L}})$ extends to a morphism of stacks $\widetilde{F} \colon \widetilde{\mathcal{C}_1} \to \widetilde{\mathcal{C}_0}$ such that the diagram
\[
\begin{CD}
\widetilde{\mathcal{C}_1} @>{\widetilde{F}}>> \widetilde{\mathcal{C}_0} \\
@V{p_{\mathcal{C}_1}}VV @VV{p_{\mathcal{C}_0}}V \\
i\mathcal{C}_1 @>{F}>> i\mathcal{C}_0
\end{CD}
\]
is commutative and represents a morphism of $\shgpdExt^1(\mathfrak{l},\mathbb{C}[[t]])$-torsors.

\subsection{Action of 2-morphisms}
Suppose that $F_i \colon \mathcal{C}_1 \to \mathcal{C}_0$, $i=0,1$, are 1-morphisms  of symplectic DQ-algebroids and $f \colon F_1 \to F_0$ is a 2-morphism. For $L\in\mathcal{C}_1$ the composition
\[
\shEnd_{\mathcal{C}_1}(L) \xrightarrow{F_1} \shEnd_{\mathcal{C}_0}(F_1(L)) \xrightarrow{\Conj(f)} \shEnd_{\mathcal{C}_0}(F_0(L))
\]
coincides with the map $\shEnd_{\mathcal{C}_1}(L) \xrightarrow{F_0} \shEnd_{\mathcal{C}_0}(F_0(L))$. Therefore, for $(L,\widetilde{\mathfrak{L}}) \in \widetilde{\mathcal{C}_1}$ there is a canonical isomorphism $\Conj(f)_*F_{1*}\widetilde{\mathfrak{L}} \cong F_{0*}\widetilde{\mathfrak{L}}$. Let $f_{*L} \colon F_{1*}\widetilde{\mathfrak{L}} \to F_{0*}\widetilde{\mathfrak{L}}$ denote the composition $F_{1*}\widetilde{\mathfrak{L}} \to \Conj(f)_*F_{1*}\widetilde{\mathfrak{L}} \xrightarrow{\cong} F_{0*}\widetilde{\mathfrak{L}}$. The 2-morphism $f_* \colon F_{1*} \to F_{0*}$ is defined by $L \mapsto f_{*L}$.

The 2-morphism $\widetilde{F_1} \to \widetilde{F_0}$ induced by $f \colon F_1 \to F_0$ is defined by
\[
\widetilde{f} := (f,f_*) \colon \widetilde{F_1} \to \widetilde{F_0} .
\]

\begin{definition}
A \emph{dilation equivariance structure (DES) on a symplectic DQ-algebroid $\mathcal{C}$} is a section $\nabla \colon i\mathcal{C} \to \widetilde{\mathcal{C}}$ of the projection $p_\mathcal{C} \colon \widetilde{\mathcal{C}} \to i\mathcal{C}$ such that for $L\in\mathcal{C}$ the induced map
\[
\shAut_\mathcal{C}(L) = \shEnd_\mathcal{C}\mathcal(L)^\times \to \shAut_{\widetilde{\mathcal{C}}}(\nabla(L))
\]
coincides with the canonical action \ref{subsection: The canonical action of units}.
\end{definition}

A morphism $f \colon \nabla_1 \to \nabla_0$ of DES on $\mathcal{C}$ is a morphism of sections of the projection $p_\mathcal{C}$, i.e. $f\diamond\id_{p_\mathcal{C}} = \id_{\id_\mathcal{C}}$.

We denote the category of DES on $\mathcal{C}$ by $\DES(\mathcal{C})$. The assignment $U \mapsto \DES(\mathcal{C}\vert_U)$ extends to a stack in groupoids which we denote $\shDES(\mathcal{C})$.


\subsection{Symplectic DQ-algebroids with DES}\label{subsection: Symplectic DQ-algebroids with DES}
Symplectic DQ-algebroids equipped with DES form a 2-category in the following manner.
\begin{itemize}
\item[\emph{Objects}] The objects are pairs $(\mathcal{C}, \nabla)$, where $\mathcal{C}$ is a symplectic DQ-algebroid and $\nabla \in \DES(\mathcal{C})$.

\item[\emph{1-morphisms:}]
Suppose that $\mathcal{C}_i$ are symplectic DQ-algebroids  and $\nabla_i \in \DES(\mathcal{C}_i)$, $i=0,1$, a 1-morphism $(\mathcal{C}_1, \nabla_1) \to (\mathcal{C}_0, \nabla_0)$ is a pair $(F,\theta)$, where $F \colon \mathcal{C}_1 \to \mathcal{C}_0$ is a 1-morphism and
\[
\theta \colon \widetilde{F}\circ\nabla_1 \to \nabla_0\circ F
\]
is a 2-morphism between 1-morphisms $\mathcal{C}_1 \to \widetilde{\mathcal{C}_0}$ which is compatible with the respective projections, that is, the composition
\[
F = p_{\mathcal{C}_0}\circ\widetilde{F}\circ\nabla_1 \xrightarrow{\id_{p_{\mathcal{C}_0}}\diamond\,\theta} p_{\mathcal{C}_0}\circ\nabla_0\circ F
\]
coincides with $\id_F$.

\item[\emph{Composition:}]
Suppose that $(\mathcal{C}_2, \nabla_2) \xrightarrow{(G,\rho)} (\mathcal{C}_1, \nabla_1) \xrightarrow{(F,\theta)} (\mathcal{C}_0, \nabla_0)$ are 1-morphisms. The composition
\begin{equation}\label{composition 1-morphism des}
\nabla_2\circ\widetilde{G\circ F} = \nabla_2\circ\widetilde{G}\circ\widetilde{F} \xrightarrow{\rho\diamond\id_{\widetilde{F}}} G\circ\nabla_1\circ\widetilde{F} \xrightarrow{\id_G\diamond\theta} G\circ F\circ\nabla_0
\end{equation}
gives rise to the 1-morphism $(\mathcal{C}_2, \nabla_2) \to (\mathcal{C}_0, \nabla_0)$ defined to be the composition $(G,\rho)\circ(F,\theta)$.

\item[\emph{Identity:}]
The identity morphism $\id_{(\mathcal{C}, \nabla)}$ is given by the pair $(\id_{\mathcal{C}}, \id_{\nabla})$.

\item[\emph{2-morphisms:}]
Suppose that $(F_i,\theta_i)$, $i=0,1$, are 1-morphisms $(\mathcal{C}_1, \nabla_1) \to (\mathcal{C}_0, \nabla_0)$. A 2-morphism $f \colon (F_1,\theta_1) \to (F_0,\theta_0)$ is a 2-morphism $f \colon F_1 \to F_0$ such that the diagram
\[
\begin{CD}
\widetilde{F_1}\circ\nabla_1 @>{\theta_1}>> \nabla_0\circ F_1 \\
@V{\widetilde{f}\diamond\id_{\nabla_1}}VV @VV{\id_{\nabla_0}\diamond f}V \\
\widetilde{F_0}\circ\nabla_1 @>{\theta_0}>> \nabla_0\circ F_0
\end{CD}
\]
is commutative.
\item[\emph{Composition:}]
Horizontal and vertical composition of 2-morphism are defined as those in the 2-category of categories.
\end{itemize}

For a symplectic form $\omega$ we denote by $\widetilde{\DQ_{X/\mathcal{P}}^\omega}$ the 2-stack with (locally defined) objects pairs $(\mathcal{C}, \nabla)$ with $\mathcal{C}\in\DQ_{X/\mathcal{P}}^\omega$, $\nabla\in\DES(\mathcal{C})$,   and 1- and 2-morphism as above. The assignment $(\mathcal{C}, \nabla) \mapsto \mathcal{C}$ extends to a morphism $\widetilde{\DQ_{X/\mathcal{P}}^\omega} \to \DQ_{X/\mathcal{P}}^\omega$.

\begin{example}\label{example: DES on MW}
Suppose that $\mathbb{A}$ is a symplectic DQ-algebra. Let $\widetilde{\mathfrak{L}} \in \DES(\mathbb{A})$. For an open subset $U \subseteq X$, the assignment $\ast \mapsto (\ast, \widetilde{\mathfrak{L}}\vert_U)$ extends to a functor $\nabla_{\widetilde{\mathfrak{L}},U}\colon \mathbb{A}(U)^+ \to \widetilde{\mathbb{A}^+}(U)$. Since $\widetilde{\mathbb{A}^+}$ is a stack, as $U$ varies the functors $\nabla_{\widetilde{\mathfrak{L}},U}$ give rise to the morphism of stacks $\nabla_{\widetilde{\mathfrak{L}}} \colon \mathbb{A}^+ \to \widetilde{\mathbb{A}^+}$ which is a DES. The assignment $\widetilde{\mathfrak{L}} \mapsto \nabla_{\widetilde{\mathfrak{L}}}$ defines a morphism of stacks $\shDES(\mathbb{A}) \to \shDES(\mathbb{A}^+)$ which is an equivalence. A quasi-inverse is given by $\nabla \mapsto \nabla(\ast)$.
\end{example}

\subsection{}
Suppose that $\nabla$ is a DES on a symplectic DQ-algebroid $\mathcal{C}$ and $(F,\theta) \in \Aut(\mathcal{C},\nabla)$. Horizontal composition with $\id_\nabla$ gives rise to the map $\mathbb{C}[[t]]$-modules
\[
\shHom_{\shAut(\mathcal{C})}(\id_\mathcal{C},F) \to \shHom_{\shHom(i\mathcal{C},\widetilde{C})}(\nabla,\nabla\circ F) \colon \psi\mapsto\id_\nabla\diamond\psi .
\]
Another map is given by the composition
\begin{multline*}
\shHom_{\shAut(\mathcal{C})}(\id_\mathcal{C},F) \xrightarrow{\widetilde{(\cdot)}} \shHom_{\shAut(\widetilde{\mathcal{C}})}(\id_{\widetilde{\mathcal{C}}}, \widetilde{F}) \xrightarrow{(\cdot)\diamond\id_\nabla} \\
\shHom_{\shHom(i\mathcal{C},\widetilde{C})}(\nabla, \widetilde{F}\circ\nabla) \xrightarrow{\theta\circ(\cdot)} \shHom_{\shHom(i\mathcal{C},\widetilde{C})}(\nabla,\nabla\circ F)
\end{multline*}
Since, for $\psi\in\shHom_{\shAut(\mathcal{C})}(\id_\mathcal{C},F)$,
\begin{multline*}
\id_{p_{\mathcal{C}}}\diamond(\theta\circ(\widetilde{\psi}\diamond\id_\nabla)) = (\id_{p_{\mathcal{C}}}\diamond\theta)\circ(\id_{p_{\mathcal{C}}}\diamond\id_\nabla\diamond\psi) = \\
\id_F\circ(\psi\diamond\id_{p_{\mathcal{C}}}\diamond\id_\nabla)=\psi = \id_{p_{\mathcal{C}}}\diamond(\id_\nabla\diamond\psi)
\end{multline*}
it follows that $\theta\circ(\widetilde{\psi}\diamond\id_\nabla)$ and $\id_\nabla\diamond\psi$ are in the same orbit of the action of $\shAut_{\DES(\mathcal{C})}(\nabla) = \dfrac1t\mathbb{C}[[t]]$ on $\shHom_{\shHom(i\mathcal{C},\widetilde{C})}(\nabla, \nabla\circ F)$. Therefore, the difference map
\begin{equation}\label{curvature map for an auto}
c(F,\theta) \colon \shHom_{\shAut(\mathcal{C})}(\id_\mathcal{C}, F) \to \dfrac1t\mathbb{C}[[t]]
\end{equation}
given by the formula
\[
c(F,\theta)(\psi) = \theta\circ(\widetilde{\psi}\diamond\id_\nabla) - \id_\nabla\diamond\psi
\]
is defined.

\lemma\label{lemma: nabla is curvature}
The map \eqref{curvature map for an auto} satisfies
\[
c(F,\theta)(f\cdot\psi) = c(F,\theta)(\psi) + t\partial_t\log(f) \ ,
\]
where $\psi\in\shHom_{\shAut(\mathcal{C})}(\id_\mathcal{C}, F)$ and $f\in\mathbb{C}[[t]]^\times$.
\endlemma
\begin{proof}
Since $\Conj(f\cdot\psi) = \Conj(f)$, it follows that that for any $L\in\mathcal{C}$ the maps $\nabla(L) \to \nabla(F(L))$ induced by $\theta\circ(\widetilde{\psi}\diamond\id_\nabla)$ and $\theta\circ(\widetilde{f\cdot\psi}\diamond\id_\nabla)$ coincide. Therefore,
\begin{multline*}
c(F,\theta)(f\cdot\psi) - c(F,\theta)(\psi) = \\
(\theta\circ(\widetilde{f\cdot\psi}\diamond\id_\nabla) - \id_\nabla\diamond(f\cdot\psi)) - (\theta\circ(\widetilde{f\psi}\diamond\id_\nabla) - \id_\nabla\diamond\psi) = \\
-(\id_\nabla\diamond(f\cdot\psi) - \id_\nabla\diamond\psi) = -\id_\nabla\diamond(f\cdot\psi - \psi)
\end{multline*}
The latter expression is calculated using the embedding $\mathbb{C}[[t]]^\times \subset \Aut(L)$, and the canonical action of units \eqref{canonical action of units on DES}. Namely, for $f\in\mathbb{C}[[t]]^\times$ and $D\in\nabla(L)$ such that $D\vert_{\zentrum(\End_\mathbb{C}(L))} = t\dfrac{d\ }{dt} = t\partial_t$, the canonical action is given by
\[
D \mapsto D - \overline{D}(f)\cdot f^{-1} = D - t\partial_t\log(f)
\]
which implies the desired result.
\end{proof}
\begin{cor}
The pair $(\shHom_{\shAut(\mathcal{C})}(\id_\mathcal{C}, F), c(F,\theta))$ is a $(\mathbb{C}[[t]]^\times \xrightarrow{t\partial_t\log}\dfrac1t\mathbb{C}[[t]])$-torsor.
\end{cor}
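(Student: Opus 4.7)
The plan is to unpack the definition of a torsor over the 2-term complex of sheaves of abelian groups $\bigl[\mathbb{C}[[t]]^\times \xrightarrow{t\partial_t\log} \dfrac1t\mathbb{C}[[t]]\bigr]$ and verify that the pair $(\shHom_{\shAut(\mathcal{C})}(\id_\mathcal{C}, F), c(F,\theta))$ satisfies the relevant conditions. Recall (from Appendix \ref{appendix: algebroid stacks}) that such a torsor consists of a pair $(P, \alpha)$, where $P$ is a $\mathbb{C}[[t]]^\times$-torsor and $\alpha \colon P \to \dfrac1t\mathbb{C}[[t]]$ is a $\mathbb{C}[[t]]^\times$-equivariant map with respect to the translation action on the target induced by $t\partial_t\log$. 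There are thus two things to verify: that the source carries a $\mathbb{C}[[t]]^\times$-torsor structure, and that $c(F,\theta)$ is equivariant with respect to it.

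The first point I would deduce from Proposition \ref{prop: symp dq 2-gerbe}(2), which identifies $\shAut_{\DQ^\omega_{X/\mathcal{P}}}(\mathcal{C})$ with the gerbe $\mathbb{C}[[t]]^\times[1]$ of $\mathbb{C}[[t]]^\times$-torsors. In any gerbe bound by an abelian sheaf $G$, the sheaf of morphisms between two objects is locally non-empty and is simply transitively acted on by $\shAut(\id) \cong G$. Taking $G = \mathbb{C}[[t]]^\times$ yields the required $\mathbb{C}[[t]]^\times$-torsor structure on $\shHom_{\shAut(\mathcal{C})}(\id_\mathcal{C}, F)$.

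The second point is precisely Lemma \ref{lemma: nabla is curvature}. The well-definedness of $c(F,\theta)$ as a map into $\dfrac1t\mathbb{C}[[t]]$ was already established in the paragraph preceding that lemma: the two morphisms $\theta\circ(\widetilde{\psi}\diamond\id_\nabla)$ and $\id_\nabla\diamond\psi$ lie in the same orbit of the simply transitive action of $\shAut_{\DES(\mathcal{C})}(\nabla) \cong \dfrac1t\mathbb{C}[[t]]$ on $\shHom_{\shHom(i\mathcal{C},\widetilde{\mathcal{C}})}(\nabla, \nabla\circ F)$, so their difference defines a well-defined section of $\dfrac1t\mathbb{C}[[t]]$. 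Combined with Lemma \ref{lemma: nabla is curvature}, this is exactly the equivariance required.

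The only mildly subtle point, and the one on which I would focus my attention, is to confirm that the $\mathbb{C}[[t]]^\times$-action on $\shHom_{\shAut(\mathcal{C})}(\id_\mathcal{C}, F)$ produced by the abstract gerbe structure of $\shAut_{\DQ^\omega_{X/\mathcal{P}}}(\mathcal{C})$ agrees with the concrete action $\psi \mapsto f \cdot \psi$ (via the embedding $\mathbb{C}[[t]]^\times \hookrightarrow \shAut_\mathcal{C}(L)$ as units of the centre) used in the statement of Lemma \ref{lemma: nabla is curvature}. Once this compatibility is verified, which is immediate from the explicit description of the monoidal equivalence in Proposition \ref{prop: symp dq 2-gerbe}(2) as $F \mapsto \shHom_{\shAut(\mathcal{C})}(\id, F)$ with its canonical $\shAut(\id)$-action, the corollary follows by assembling the two previously established results.
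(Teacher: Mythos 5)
Your proposal is correct and follows essentially the same route as the paper, which states the corollary without proof precisely because it is the immediate assembly of the two ingredients you cite: the $\mathbb{C}[[t]]^\times$-torsor structure on $\shHom_{\shAut(\mathcal{C})}(\id_\mathcal{C},F)$ coming from Proposition \ref{prop: symp dq 2-gerbe}(2), and the equivariance identity $c(F,\theta)(f\cdot\psi)=c(F,\theta)(\psi)+t\partial_t\log(f)$ of Lemma \ref{lemma: nabla is curvature}, matched against the description of $(A^0\xrightarrow{d}A^1)$-torsors as pairs $(T,\mathtt{c})$ in Appendix \ref{subsection: Picard stacks and complexes}. Your additional check that the abstract gerbe action agrees with the concrete action $\psi\mapsto f\cdot\psi$ is a sensible point to flag, and it is settled by the explicit form of the equivalence in Proposition \ref{prop: symp dq 2-gerbe}(2).
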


The assignment $(F,\theta) \mapsto (\shHom_{\shAut(\mathcal{C})}(\id_\mathcal{C}, F), c(F,\theta))$ extends to a morphism
\begin{equation}\label{automorphisms with DES to torsors with curv}
\shAut_{\widetilde{\DQ_{X/\mathcal{P}}^\omega}}(\mathcal{C},\nabla) \to (\mathbb{C}[[t]]^\times \xrightarrow{t\partial_t\log}\dfrac1t\mathbb{C}[[t]])[1] \ .
\end{equation}
The canonical morphism $\mathbb{C}[[t]]^\times[1] \to \shAut_{\DQ_{X/\mathcal{P}}^{\omega}}(\mathcal{C})$ lifts to the morphism
\begin{equation}\label{torsors with curv to automorphisms with DES}
(\mathbb{C}[[t]]^\times \xrightarrow{t\partial_t\log}\dfrac1t\mathbb{C}[[t]])[1] \to \shAut_{\widetilde{\DQ_{X/\mathcal{P}}^\omega}}(\mathcal{C},\nabla) \colon (T,c) \mapsto (T\otimes(\cdot), c).
\end{equation}
\begin{prop}\label{prop: algds with des 2-torsor}
{~}
\begin{enumerate}
\item The morphism \eqref{torsors with curv to automorphisms with DES} and the morphism \eqref{automorphisms with DES to torsors with curv} are mutually quasi-inverse monoidal equivalences.

\item The 2-stack $\widetilde{\DQ_{X/\mathcal{P}}^\omega}$ is a $(\mathbb{C}[[t]]^\times \xrightarrow{t\partial_t\log}\dfrac1t\mathbb{C}[[t]])[1]$-gerbe.
\end{enumerate}
\end{prop}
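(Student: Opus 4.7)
The plan for (1) is to leverage Proposition~\ref{prop: symp dq 2-gerbe}(2), which already identifies $\shAut_{\DQ^\omega_{X/\mathcal{P}}}(\mathcal{C})$ with $\mathbb{C}[[t]]^\times[1]$ at the level of 1-gerbes, and to extend this equivalence so that it tracks the curvature. First I would fix $F \in \shAut_{\DQ^\omega_{X/\mathcal{P}}}(\mathcal{C})$ and analyze the fiber, over $F$, of the forgetful functor $(F,\theta) \mapsto F$: since two choices of $\theta$ differ by an element of $\shAut_{\shDES(\mathcal{C})}(\nabla) \cong \dfrac{1}{t}\mathbb{C}[[t]]$, this fiber is a torsor under $\dfrac{1}{t}\mathbb{C}[[t]]$. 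The assignment $\theta \mapsto c(F,\theta)$ intertwines this action with the translation action on $\shHom\bigl(\shHom_{\shAut(\mathcal{C})}(\id_\mathcal{C},F), \dfrac{1}{t}\mathbb{C}[[t]]\bigr)$, and Lemma~\ref{lemma: nabla is curvature} supplies the twisted $\mathbb{C}[[t]]^\times$-equivariance needed for $(\shHom_{\shAut(\mathcal{C})}(\id_\mathcal{C},F), c(F,\theta))$ to define a torsor over $(\mathbb{C}[[t]]^\times \xrightarrow{t\partial_t\log}\dfrac1t\mathbb{C}[[t]])$.

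Next, I would verify monoidality -- equivalently, that \eqref{automorphisms with DES to torsors with curv} and \eqref{torsors with curv to automorphisms with DES} are mutually quasi-inverse and the former is monoidal -- by establishing the additivity identity $c(F_1\circ F_2, \theta_{12}) = c(F_1,\theta_1) + c(F_2, \theta_2)$ under the composition law \eqref{composition 1-morphism des}. This reduces to a diagram chase using the interchange law and the naturality of $\widetilde{(\cdot)}$: expanding $c(F_1\circ F_2, \theta_{12})$ from the definition and substituting $\theta_{12}$, the resulting horizontal composite splits, modulo commuting squares already provided by the axioms of $\widetilde{(\cdot)}$ and by the compatibility of $\theta_1,\theta_2$ with the projections, into the sum of the two individual curvatures evaluated on the corresponding factors.

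For (2), local non-emptiness of $\widetilde{\DQ^\omega_{X/\mathcal{P}}}$ follows by assembling Example~\ref{ex:MW defines a sympl DQ-algebroid} (a local object $\mathbb{A}_{MW}^+ \in \DQ^\omega_{X/\mathcal{P}}$), Lemma~\ref{lemma: des exist} (a local DES on $\mathbb{A}_{MW}$), and Example~\ref{example: DES on MW} (promoting the latter into a DES on $\mathbb{A}_{MW}^+$). For local connectedness I would connect two local objects $(\mathcal{C}_i,\nabla_i)$ in two stages: first, Proposition~\ref{prop: symp dq 2-gerbe}(3) supplies a local 1-equivalence $F\colon\mathcal{C}_1\to\mathcal{C}_0$; second, $\widetilde{F}\circ\nabla_1$ and $\nabla_0\circ F$ are two sections of the $\shgpdExt^1(\mathfrak{l},\dfrac1t\mathbb{C}[[t]])$-torsor $\widetilde{\mathcal{C}_0}$ over $\mathcal{C}_0$, hence are locally 2-isomorphic, yielding a $\theta$. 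Combined with (1), this gives the gerbe property. The main obstacle I anticipate is the monoidality identity in the second paragraph: the formula for $c(F,\theta)$ mixes $\theta$, $\widetilde{(\cdot)}$, and the two forms of horizontal composition (against $\id_\nabla$ on either side), so the danger is in miscounting interchange terms, but the tools are essentially the same as those used in the proof of Lemma~\ref{lemma: nabla is curvature}.
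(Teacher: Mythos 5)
Your proposal is correct and follows essentially the same route as the paper: part (1) is the quasi-inverse/monoidality verification that the paper simply declares to be clear (your curvature-additivity check under composition being the only substantive point to spell out), and part (2) reduces, exactly as in the paper, to local non-emptiness and local connectedness via the local model $\mathbb{A}_{MW}^+$ together with Example \ref{example: DES on MW}. The paper's own proof is just a terser version of this argument.
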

\begin{proof}
It is clear that \eqref{torsors with curv to automorphisms with DES} and the morphism \eqref{automorphisms with DES to torsors with curv} are mutually quasi-inverse.

In view of the equivalence $\shAut_{\widetilde{\DQ_{X/\mathcal{P}}^\omega}}(\mathcal{C},\nabla) \cong (\mathbb{C}[[t]]^\times \xrightarrow{t\partial_t\log}\dfrac1t\mathbb{C}[[t]])[1]$ it remains to show that $\widetilde{\DQ_{X/\mathcal{P}}^\omega}$ is locally non-empty and locally connected.
These properties follow from the fact that any symplectic DQ-algebroid is locally equivalent to one of the form $\mathbb{A}^+$, where $\mathbb{A}$ is a symplectic DQ-algebra, and 
Example \ref{example: DES on MW}. 
\end{proof}

\section{Classical limits of DES} \label{section: Classical limits of DES}
\subsection{Classical limit of DES on DQ-algebras}\label{subsection: Classical limit of DES}
Suppose that $\mathbb{A}$ is a symplectic DQ-algebra. Let $\pi$ denote the associated Poisson bi-vector and let $\omega = \pi^{-1}$ denote the corresponding symplectic form.

Let $F_i\mathbb{A} = t^{-i}\mathbb{A}$. The $t$-adic filtration $F_\bullet\mathbb{A}$ induces the filtration denoted $F_\bullet\Der_\mathbb{C}(\mathbb{A})$, hence filtrations $F_\bullet\Der_{\mathbb{C}[[t]]}(\mathbb{A})$, $F_\bullet\mathfrak{L}(\mathbb{A})$.
\lemma\label{lemma: conf vf and Gr}
{\ }
\begin{enumerate}
\item $\Gr_\bullet^F\mathbb{A} = \mathcal{O}_{X/\mathcal{P}}[t]$ with the Poisson structure given by $t\pi$.

\item
\[
\Der^i_\mathbb{C}(\mathcal{O}_{X/\mathcal{P}}[t]) = 
\begin{cases}
0 & \mbox{if } i\geq 2 \\
\mathcal{O}_X\cdot\partial_t & \mbox{if } i = 1 \\
t^{-i}\left(\mathcal{T}_{X/\mathcal{P}}\oplus\mathcal{O}_{X/\mathcal{P}}\otimes\mathfrak{l}\right) & \mbox{if } i\leq 0
\end{cases}
\]
\end{enumerate}
\endlemma
\begin{proof}
Left to the reader.
\end{proof}

Let $\Der^0_\mathbb{C}(\mathcal{O}_{X/\mathcal{P}}[t], t\pi)$ denote the subsheaf of $\Der^0_\mathbb{C}(\mathcal{O}_{X/\mathcal{P}}[t])$ of derivations which preserve $t\pi$, i.e. those $D \in \Der^0_\mathbb{C}(\mathcal{O}_{X/\mathcal{P}}[t])$ which satisfy $[D,t\pi]_{SN} = 0$. Here and below we denote by $[\ ,\ ]_{SN}$ the Schouten-Nijenhuis bracket.
For $\lambda\in \mathbb{C}$,
\[
[t\pi, \xi + \lambda\cdot t\partial_t]_{SN} = t([\pi,\xi]_{SN} + \lambda\pi)\ ,
\]
i.e. $\xi + \lambda\cdot t\partial_t\in\Der^0_\mathbb{C}(\mathcal{O}_{X/\mathcal{P}}[t], t\pi) \Leftrightarrow [\pi,\xi]_{SN} + \lambda\pi=0$.

The sheaf $\mathcal{T}_{X/\mathcal{P}}^{\pi-\mathtt{conf}}$ of ($\pi$-)conformal vector fields is defined by the pull-back square
\[
\begin{CD}
\mathcal{T}_{X/\mathcal{P}}^{\pi-\mathtt{conf}} @>>> \Der^0_\mathbb{C}(\mathcal{O}_{X/\mathcal{P}}[t], t\pi) \\
@VVV @VVV \\
\mathfrak{l} @>{1\otimes\id}>> \mathcal{O}_{X/\mathcal{P}}\otimes\mathfrak{l}
\end{CD}
\]
The map $\mathcal{T}_{X/\mathcal{P}}^{\pi-\mathtt{conf}} \to \mathcal{T}_{X/\mathcal{P}}$ is a monomorphism so that we can and will regard the former as a subsheaf of the latter.
\lemma\label{lemma: Gr Der A and Der O}
{\ }
\begin{enumerate}
\item The map $\Gr^F_0\Der_\mathbb{C}(\mathbb{A}) \to \Der^0_\mathbb{C}(\mathcal{O}_{X/\mathcal{P}}[t])$ is injective with image $\Der^0_\mathbb{C}(\mathcal{O}_{X/\mathcal{P}}[t], t\pi)$.

\item The map $\Gr^F_0\Der_{\mathbb{C}[[t]]}(\mathbb{A}) \to \Gr^F_0\Der_\mathbb{C}(\mathbb{A})$ is injective and the image of the composition $\Gr^F_0\Der_{\mathbb{C}[[t]]}(\mathbb{A}) \to \Gr^F_0\Der_\mathbb{C}(\mathbb{A}) \to \Der^0_\mathbb{C}(\mathcal{O}_{X/\mathcal{P}}[t])$ is equal to $\mathcal{T}_{X/\mathcal{P}}^\pi$.

\item The map $\Gr^F_0\mathfrak{L}(\mathbb{A}) \to \Der^0_\mathbb{C}(\mathcal{O}_{X/\mathcal{P}}[t])$ is injective with image $\mathcal{T}_{X/\mathcal{P}}^{\pi-\mathtt{conf}}$.
\end{enumerate}
\endlemma
\begin{proof}
Left to the reader.
\end{proof}

Since $\pi$ is nondegenerate, the Lichnerowicz-Poisson complex
\begin{equation}\label{Lichnerowicz-Poisson complex}
0 \to \dfrac1t\mathcal{O}_{X/\mathcal{P}} \xrightarrow{[t\pi,\,\cdot\,]_{SN}}
\mathcal{T}_{X/\mathcal{P}} \xrightarrow{[t\pi,\,\cdot\,]_{SN}} t\cdot\textstyle{\bigwedge^2}\mathcal{T}_{X/\mathcal{P}} \xrightarrow{[t\pi,\,\cdot\,]_{SN}}\cdots
\end{equation}
satisfies the Poincar\'e Lemma.

In what follows we shall identify $\mathfrak{l}$ with $\mathbb{C}$ using the global section $t\partial_t$. Hence, there is a canonical map $\mathcal{T}_{X/\mathcal{P}}^{\pi-\mathtt{conf}} \to \mathbb{C}$.
\begin{prop}
The diagram
\begin{equation}\label{poisson-de Rham}
\begin{CD}
0 @>>> \dfrac1t\mathbb{C} @>>> \dfrac1t\mathcal{O}_X @>{[t\pi,\,\cdot\,]_{SN}}>> \mathcal{T}_{X/\mathcal{P}}^{\pi-\mathtt{conf}} @>>> \mathbb{C} @>>> 0 \\
& & @| @| @VVV @VV{1\mapsto t\pi}V \\
0 @>>> \dfrac1t\mathbb{C} @>>> \dfrac1t\mathcal{O}_{X/\mathcal{P}} @>{[t\pi,\,\cdot\,]_{SN}}>> 
\mathcal{T}_{X/\mathcal{P}} @>{[t\pi,\,\cdot\,]_{SN}}>> t\cdot\textstyle{\bigwedge^2}\mathcal{T}_{X/\mathcal{P}} @>{[t\pi,\,\cdot\,]_{SN}}>>\cdots
\end{CD}
\end{equation}
where the bottom row is the complex \eqref{Lichnerowicz-Poisson complex}, is a pull-back diagram with exact rows. Thus, the top row represents the class of $\dfrac1t\omega$ in $H^2(X;\dfrac1t\mathbb{C})$.
\end{prop}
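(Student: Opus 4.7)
My plan is to carry out three steps: (i) establish exactness of both rows; (ii) recognize the right-hand square as a pullback of sheaves of complex vector spaces; and (iii) identify the resulting Yoneda class of the top row in $H^2(X;\dfrac1t\mathbb{C})$ with $\bigl[\dfrac1t\omega\bigr]$.

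For (i), the bottom row is the Lichnerowicz-Poisson complex, exact by the Poincar\'e lemma asserted above (thanks to non-degeneracy of $\pi$). For the top row, by non-degeneracy $\ker\bigl([t\pi,\,\cdot\,]_{SN}\vert_{\dfrac1t\mathcal{O}_{X/\mathcal{P}}}\bigr)$ consists of Casimirs, i.e.\ local constants $\dfrac1t\mathbb{C}$; the image consists of Hamiltonian vector fields and lies in $\mathcal{T}^\pi_{X/\mathcal{P}}=\ker\bigl(\mathcal{T}_{X/\mathcal{P}}^{\pi-\mathtt{conf}}\to\mathbb{C}\bigr)$, with the reverse inclusion (locally) supplied by the Poincar\'e lemma for the bottom row. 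Local surjectivity onto $\mathbb{C}$ is witnessed in Darboux coordinates by the Euler vector field $\dfrac12\sum_i(p_i\partial_{p_i}+q_i\partial_{q_i})$, in the spirit of the proof of Lemma \ref{lemma: des exist}. For (ii) the right-most square is a pullback by the very construction of $\mathcal{T}_{X/\mathcal{P}}^{\pi-\mathtt{conf}}$: its sections are pairs $(\xi,\lambda)$ with $[t\pi,\xi]_{SN}=\lambda\cdot t\pi$, exactly the fiber-product condition along $1\mapsto t\pi$. Since $[t\pi,t\pi]_{SN}=0$, the map $1\mapsto t\pi$ factors through the image $Z\subseteq t\bigwedge^2\mathcal{T}_{X/\mathcal{P}}$ of $[t\pi,\,\cdot\,]_{SN}\colon\mathcal{T}_{X/\mathcal{P}}\to t\bigwedge^2\mathcal{T}_{X/\mathcal{P}}$ (equivalently, by exactness, the local kernel of the next differential).

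For (iii), the top row is thus obtained as the pullback along $1\mapsto t\pi$ of the 4-term exact sequence $0\to\dfrac1t\mathbb{C}\to\dfrac1t\mathcal{O}_{X/\mathcal{P}}\to\mathcal{T}_{X/\mathcal{P}}\to Z\to 0$ truncated from \eqref{Lichnerowicz-Poisson complex}. In the $C^\infty$ setting \eqref{Lichnerowicz-Poisson complex} is a fine resolution of $\dfrac1t\mathbb{C}$, so $H^2(X;\dfrac1t\mathbb{C})$ is the second cohomology of its complex of global sections. Mapping the top row into this resolution in the evident way, with last vertical arrow $\mathbb{C}\to t\bigwedge^2\mathcal{T}_{X/\mathcal{P}}$, $1\mapsto t\pi$, presents the top row's Yoneda class as the cohomology class of the global cocycle $t\pi\in\Gamma(X;t\bigwedge^2\mathcal{T}_{X/\mathcal{P}})$. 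Finally, non-degeneracy of $\pi$ provides a chain isomorphism (with appropriate $t$-rescalings) between \eqref{Lichnerowicz-Poisson complex} and the de Rham complex $\dfrac1t\Omega^\bullet_{X/\mathcal{P}}$, under which $t\pi$ is the image of $\dfrac1t\omega$; in degree 2 this reduces to the identity $\bigwedge^2\pi^\sharp(\omega)=\pi$, easily verified in Darboux coordinates. Hence the class of the top row is $\bigl[\dfrac1t\omega\bigr]\in H^2(X;\dfrac1t\mathbb{C})$.

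\textbf{Main obstacle.} The subtlest step is (iii): one must (a) match the pullback of a 4-term Yoneda extension along a cocycle with the class of that cocycle computed via the resolution, and (b) carefully track the $t$-grading and signs in the Lichnerowicz-Poisson/de Rham chain isomorphism. Steps (i) and (ii) are a routine exactness argument and a definitional unwinding, respectively.
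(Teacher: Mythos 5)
Your argument is correct, and it reaches the conclusion by a more hands-on route than the paper. The paper's proof is a one-line citation: it deduces exactness of the top row and the pull-back property from Lemma \ref{lemma: conf vf and Gr} and Lemma \ref{lemma: Gr Der A and Der O}, i.e.\ from the identification of the terms of the top row with the degree-zero associated graded pieces of the four-term sequence \eqref{spliced extensions} attached to a DQ-algebra, together with the Poincar\'e Lemma for \eqref{Lichnerowicz-Poisson complex}. You instead verify everything directly in Poisson-geometric terms: Casimirs for exactness at $\dfrac1t\mathcal{O}_{X/\mathcal{P}}$, Hamiltonian versus Poisson vector fields (plus the Poincar\'e Lemma) at $\mathcal{T}_{X/\mathcal{P}}^{\pi-\mathtt{conf}}$, the Euler field in Darboux coordinates for local surjectivity onto $\mathbb{C}$, and the observation that $[t\pi,t\pi]_{SN}=0$ makes the map $1\mapsto t\pi$ land in the cocycles. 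This buys self-containedness (the proposition, after all, mentions no DQ-algebra), at the cost of redoing by hand what the quoted lemmas package; note also that your local surjectivity and exactness at $\mathcal{T}_{X/\mathcal{P}}^{\pi-\mathtt{conf}}$ already follow formally from the pull-back property together with exactness of the bottom row, so the explicit Euler-field witness is a pleasant but optional extra. Your step (iii) — pulling back the truncated resolution along the cocycle $t\pi$ and transporting through the chain isomorphism $\wedge^\bullet\pi^\sharp$ to land on $\dfrac1t\omega$ — is exactly what the paper leaves implicit (compare the last paragraph of Appendix \ref{subsection: Extensions and 2-torsors}).

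One small caveat: the phrase ``fine resolution'' is only accurate when $\mathcal{P}$ is real; for a general integrable complex distribution satisfying \eqref{sufficient condition} (e.g.\ a complex structure) the sheaves $\mathcal{T}_{X/\mathcal{P}}$ are not fine and one must read the class of the pulled-back $2$-extension through the hypercohomology of the resolution, which still maps canonically to $H^2(X;\dfrac1t\mathbb{C})$ because \eqref{Lichnerowicz-Poisson complex} is a resolution. This does not affect the argument, only its stated generality, and the sign bookkeeping you flag is likewise genuinely there but harmless.
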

\begin{proof}
Follows from Lemma \ref{lemma: conf vf and Gr}, Lemma \ref{lemma: Gr Der A and Der O} and the Poincar\'e Lemma.
\end{proof}

The principal symbol map $\sigma_\mathbb{A} \colon \mathbb{A} \to \mathcal{O}_X$ gives rise to the map of exact sequences
\[
\begin{CD}
0 @>>> \dfrac1t\mathbb{C}[[t]] @>>> \dfrac1t\mathbb{A}  @>{\ad}>> \mathfrak{L}(\mathbb{A}) @>{(\cdot)\vert_{\zentrum(\mathbb{A})}}>> \mathfrak{l} @>>> 0 \\
& & @V{\sigma}VV @V{\sigma_\mathbb{A}}VV @V{\sigma_\mathbb{A}}VV @VV{t\partial_t \mapsto 1}V \\
0 @>>> \dfrac1t\mathbb{C} @>>> \dfrac1t\mathcal{O}_{X/\mathcal{P}} @>{[t\pi,\,\cdot\,]_{SN}}>> \mathcal{T}_{X/\mathcal{P}}^{\pi-\mathtt{conf}} @>>> \mathbb{C} @>>> 0
\end{CD}
\]
hence to the morphism of torsors (Appendix \ref{subsection: Extensions and 2-torsors})
\begin{equation}\label{DES to int}
\gr \colon \shDES(\mathbb{A}) \to \int\frac1t\omega ,
\end{equation}
where $\int\frac1t\omega$ denotes the $\dfrac1t\mathbb{C}[1]$-torsor corresponding to the top row of \eqref{poisson-de Rham}, relative to the morphism of Picard stacks $ \dfrac1t\mathbb{C}[[t]][1] \xrightarrow{\sigma} \dfrac1t\mathbb{C}[1]$.

The morphism \eqref{DES to int} is natural in $\mathbb{A}$ in the sense that it commutes with the direct image functors \eqref{DES pushforward}.

\subsection{Des obstructions parasites}
Since the map of complexes
\[
(\mathbb{C}^\times \xrightarrow{0} \dfrac1t\mathbb{C} \oplus \mathbb{C}) \to (\mathbb{C}[[t]]^\times \xrightarrow{t\partial_t\log}\dfrac1t\mathbb{C}[[t]])
\]
is quasi-isomorphism, it follows that there is a canonical equivalence
\begin{equation}\label{parasites}
(\mathbb{C}[[t]]^\times \xrightarrow{t\partial_t\log}\dfrac1t\mathbb{C}[[t]])[3] \cong \mathbb{C}^\times[3]\times \dfrac1t\mathbb{C}[2]\times\mathbb{C}[2] \ .
\end{equation}
Thus, $\widetilde{\DQ_{X/\mathcal{P}}^\omega}$ determines a class in $\pi_0(\mathbb{C}^\times[3](X)\times \dfrac1t\mathbb{C}[2](X)\times\mathbb{C}[2](X)) = H^3(X;\mathbb{C}^\times)\times H^2(X;\dfrac1t\mathbb{C})\times H^2(X;\mathbb{C})$. In \cite{D}, 4.7, P.~Deligne refers to the last two components as \emph{obstructions parasites}.

\subsection{Classical limit of DES on DQ-algebroids}
For $\mathcal{C}\in\DQ_{X/\mathcal{P}}^\omega$ the assignment $\widetilde{\mathcal{C}} \ni (L,\widetilde{\mathfrak{L}}) \mapsto \gr\widetilde{\mathfrak{L}}$ (cf. \ref{subsection: DES enhancement} and \eqref{DES to int}) extends to a functor
\[
\widetilde{\gr}_\mathcal{C} \colon \widetilde{\mathcal{C}} \to \int\frac1t\omega
\]
which is equivariant with respect to the morphism of Picard stacks
\[
\shgpdExt^1(\mathfrak{l},\dfrac1t\mathbb{C}[[t]]) \cong \dfrac1t\mathbb{C}[[t]][1] \to \dfrac1t\mathbb{C}[1]
\]
induced by reduction modulo $t$.

For a 1-morphism $F \colon \mathcal{C}_1 \to \mathcal{C}_0$ and $(L,\widetilde{\mathfrak{L}})\in\widetilde{\mathcal{C}_1}$ the map $\shEnd_{\mathcal{C}_1}(L) \to \shEnd_{\mathcal{C}_0}(F(L))$ induces the identity map after reduction modulo $t$. Therefore, the isomorphism $\widetilde{\mathfrak{L}} \to F_*\widetilde{\mathfrak{L}}$ of $\mathbb{C}[[t]]$-modules induces the identity map $\gr\widetilde{\mathfrak{L}} \to \gr F_*\widetilde{\mathfrak{L}}$. Moreover, the functor $\widetilde{F} \colon \widetilde{\mathcal{C}_1} \to \widetilde{\mathcal{C}_0}$ satisfies $\widetilde{\gr}_{\mathcal{C}_0}\circ\widetilde{F} = \widetilde{\gr}_{\mathcal{C}_1}$.

For $(\mathcal{C},\nabla) \in \widetilde{\DQ_{X/\mathcal{P}}^\omega}$ the composition $i\mathcal{C} \xrightarrow{\nabla} \widetilde{\mathcal{C}} \xrightarrow{\widetilde{\gr}_\mathcal{C}} \int\frac1t\omega$ is locally constant. For a 1-morphism $(F,\theta) \colon (\mathcal{C}_1,\nabla_1) \to (\mathcal{C}_0,\nabla_0)$, the 2-morphism $\theta$ gives rise to the morphism $\gr(\theta) \colon \widetilde{\gr}_{\mathcal{C}_1}\nabla_1(i\mathcal{C}_1) \to \widetilde{\gr}_{\mathcal{C}_0}\nabla_0(i\mathcal{C}_0)$. In particular, under the identifications \eqref{automorphisms with DES to torsors with curv} and \eqref{torsors with curv to automorphisms with DES}, the assignment $\shAut(\mathcal{C},\nabla) \ni (F,\theta) \mapsto \gr(\theta) \in \dfrac1t\mathbb{C}$ defines a functor (with discrete target) which corresponds to the morphism of Picard stacks $(\mathbb{C}[[t]]^\times \xrightarrow{t\partial_t\log}\dfrac1t\mathbb{C}[[t]])[1] \to \dfrac1t\mathbb{C}[1]$. We summarize the foregoing discussion in the following proposition.

\begin{prop}\label{prop: dq with des to antiderivs}
The assignment $\widetilde{\DQ_{X/\mathcal{P}}^\omega} \ni (\mathcal{C},\nabla) \mapsto \widetilde{\gr}_\mathcal{C}\nabla(i\mathcal{C}) \in \int\frac1t\omega$ extends to a morphism of torsors
\begin{equation}\label{dq with des to antiderivs}
\widetilde{\gr} \colon \widetilde{\DQ_{X/\mathcal{P}}^\omega} \to \int\frac1t\omega
\end{equation}
relative to the morphism of Picard stacks $(\mathbb{C}[[t]]^\times \xrightarrow{t\partial_t\log}\dfrac1t\mathbb{C}[[t]])[2] \to \dfrac1t\mathbb{C}[1]$.
\end{prop}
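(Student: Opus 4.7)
The plan is to assemble the ingredients already set up in the preceding discussion into a single coherent morphism of 2-torsors. On objects, given $(\mathcal{C},\nabla)\in\widetilde{\DQ_{X/\mathcal{P}}^\omega}$, I consider the composition
\[
i\mathcal{C}\xrightarrow{\nabla}\widetilde{\mathcal{C}}\xrightarrow{\widetilde{\gr}_\mathcal{C}}\int\frac{1}{t}\omega .
\]
The local constancy of this composition, noted just above, combined with the fact that $i\mathcal{C}$ is locally non-empty and locally connected (because $\DQ_{X/\mathcal{P}}^\omega$ is a $\mathbb{C}[[t]]^\times[1]$-gerbe by Proposition \ref{prop: symp dq 2-gerbe}), allows me to descend it to a single object $\widetilde{\gr}_\mathcal{C}\nabla(i\mathcal{C})\in(\int\frac{1}{t}\omega)(X)$; this defines the value of $\widetilde{\gr}$ on $(\mathcal{C},\nabla)$.

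For a 1-morphism $(F,\theta)\colon(\mathcal{C}_1,\nabla_1)\to(\mathcal{C}_0,\nabla_0)$, I exploit the previously established identity $\widetilde{\gr}_{\mathcal{C}_0}\circ\widetilde{F}=\widetilde{\gr}_{\mathcal{C}_1}$, so that applying $\widetilde{\gr}_{\mathcal{C}_0}$ to the 2-morphism $\theta\colon\widetilde{F}\circ\nabla_1\to\nabla_0\circ F$ yields the morphism $\gr(\theta)\colon\widetilde{\gr}_{\mathcal{C}_1}\nabla_1(i\mathcal{C}_1)\to\widetilde{\gr}_{\mathcal{C}_0}\nabla_0(i\mathcal{C}_0)$ in $\int\frac{1}{t}\omega$. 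Compatibility with composition reduces via \eqref{composition 1-morphism des} to the identity $\widetilde{G\circ F}=\widetilde{G}\circ\widetilde{F}$ together with associativity of horizontal composition of 2-morphisms. For a 2-morphism $f\colon(F_1,\theta_1)\to(F_0,\theta_0)$ of DQ-algebroids with DES, the defining commutative square forces the induced morphisms $\gr(\theta_1)$ and $\gr(\theta_0)$ in $\int\frac{1}{t}\omega$ to agree, which is precisely the compatibility needed.

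The equivariance claim reduces to identifying how $\widetilde{\gr}$ acts on automorphism 2-groups. Proposition \ref{prop: algds with des 2-torsor} identifies $\shAut_{\widetilde{\DQ_{X/\mathcal{P}}^\omega}}(\mathcal{C},\nabla)$ with $(\mathbb{C}[[t]]^\times\xrightarrow{t\partial_t\log}\frac{1}{t}\mathbb{C}[[t]])[1]$, while $\shAut_{\int\frac{1}{t}\omega}(\widetilde{\gr}_\mathcal{C}\nabla(i\mathcal{C}))\simeq\frac{1}{t}\mathbb{C}$. The last paragraph before the proposition already computes the induced map $(F,\theta)\mapsto\gr(\theta)$ under these identifications and shows it is the morphism of Picard stacks $(\mathbb{C}[[t]]^\times\xrightarrow{t\partial_t\log}\frac{1}{t}\mathbb{C}[[t]])[1]\to\frac{1}{t}\mathbb{C}[1]$ induced by reduction modulo $t$. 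Shifting this one step up — as is required to pass from actions on automorphism 2-groups to the action of the structure Picard 2-stack on the 2-torsor itself — yields the morphism $(\mathbb{C}[[t]]^\times\xrightarrow{t\partial_t\log}\frac{1}{t}\mathbb{C}[[t]])[2]\to\frac{1}{t}\mathbb{C}[1]$ claimed in the proposition.

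The main obstacle, and the only one requiring real care, is bookkeeping: checking the higher coherences (associativity of composition and the interchange law for horizontal and vertical compositions of 2-morphisms) in the assignment $\widetilde{\gr}$. These are routine but tedious and need no input beyond the functoriality of $\mathcal{C}\mapsto\widetilde{\gr}_\mathcal{C}$, the compatibility $\widetilde{\gr}_{\mathcal{C}_0}\circ\widetilde{F}=\widetilde{\gr}_{\mathcal{C}_1}$, and the canonical action of units \eqref{canonical action of units on DES}. The conceptual content — local constancy of $\widetilde{\gr}_\mathcal{C}\circ\nabla$ and the explicit identification of $\widetilde{\gr}$ on automorphism 2-groups — is already present in the discussion preceding the statement.
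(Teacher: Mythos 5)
Your proposal is correct and follows essentially the same route as the paper: the proposition there is explicitly a summary of the preceding discussion, which already records the local constancy of $\widetilde{\gr}_\mathcal{C}\circ\nabla$, the identity $\widetilde{\gr}_{\mathcal{C}_0}\circ\widetilde{F}=\widetilde{\gr}_{\mathcal{C}_1}$ giving $\gr(\theta)$ on 1-morphisms, and the identification of the induced map on automorphism 2-groups with $(\mathbb{C}[[t]]^\times\xrightarrow{t\partial_t\log}\frac1t\mathbb{C}[[t]])[1]\to\frac1t\mathbb{C}[1]$. Your only additions — the descent of the locally constant composition to a global object of $\int\frac1t\omega$ and the remark that the remaining coherences are routine — are exactly the steps the paper leaves implicit.
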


Thus, under the morphism $(\mathbb{C}[[t]]^\times \xrightarrow{t\partial_t\log}\dfrac1t\mathbb{C}[[t]])[3] \to \dfrac1t\mathbb{C}[2]$, the $(\mathbb{C}[[t]]^\times \xrightarrow{t\partial_t\log}\dfrac1t\mathbb{C}[[t]])[1]$-gerbe $\widetilde{\DQ_{X/\mathcal{P}}^\omega}$ is mapped to the $\dfrac1t\mathbb{C}$-gerbe $\int\frac1t\omega$. In particular, the component of the class of $\widetilde{\DQ_{X/\mathcal{P}}^\omega}$ in $H^2(X;\dfrac1t\mathbb{C})$ is equal to the cohomology class of $\frac1t\omega$.

\section{Self-duality structures} \label{section: Self-duality structures}
\subsection{Self-duality}
We denote by $a \colon \mathbb{C}[[t]] \to \mathbb{C}[[t]]$ the automorphism determined by $t \mapsto -t$. Note that $a$ is an involution: $a\circ a = \id$. For a $\mathbb{C}[[t]]$-module $M$ we denote by $aM$ the $\mathbb{C}$-vector space $M$ with the $\mathbb{C}[[t]]$-module structure given by the composition $\mathbb{C}[[t]] \xrightarrow{a} \mathbb{C}[[t]] \to \End_\mathbb{C}(M)$.

The automorphism $\mathbb{C}[[t]] \xrightarrow{a} \mathbb{C}[[t]]$ induces an automorphism $a \colon \DQ_{X/\mathcal{P}} \to \DQ_{X/\mathcal{P}}$. For $\mathcal{C}\in\DQ_{X/\mathcal{P}}$ the DQ-algebroid $a\mathcal{C}$ admits the following description:
\begin{itemize}
\item[\emph{Objects:}] The algebroid $a\mathcal{C}$ has the same objects as $\mathcal{C}$; for an object $L\in\mathcal{C}$ we denote by $aL$ the corresponding object of $a\mathcal{C}$

\item[\emph{Morphisms:}] For $L_0,L_1\in\mathcal{C}$,  $\Hom_{a\mathcal{C}}(aL_1,aL_0) = a\Hom_\mathcal{C}(L_1,L_0)$.
\end{itemize}

The DQ-algebroids $\left(a\mathcal{C}\right)^\op$ and $a\left(\mathcal{C}^\op\right)$ coincide.
We will denote the common value of $\left(a\mathcal{C}\right)^\op$ and $a\left(\mathcal{C}^\op\right)$ by $\mathcal{C}^\dagger$. The assignment $\mathcal{C} \mapsto \mathcal{C}^\dagger$ extends to a morphism $\left(\bullet\right)^\dagger \colon \DQ_{X/\mathcal{P}} \to \DQ_{X/\mathcal{P}}^\co$.\footnote{For a 2-category $\mathfrak{S}$, the 2-category  $\mathfrak{S}^\co$ has same objects and 1-morphisms, with $\Hom_{\mathfrak{S}^\co}(\cdot,\cdot) = \Hom_\mathfrak{S}(\cdot,\cdot)^\op$.} The functor $\left(\bullet\right)^\dagger$ is an involution: $\mathcal{C}^{\dagger\dagger} = \mathcal{C}$. Moreover, if $\mathcal{C} \in \DQ_{X/\mathcal{P}}^\pi$ then $\mathcal{C}^\dagger \in \DQ_{X/\mathcal{P}}^\pi$, i.e. the involution $\left(\bullet\right)^\dagger$ restricts to an involution $\left(\bullet\right)^\dagger \colon \DQ^\pi_{X/\mathcal{P}} \to \DQ^\pi_{X/\mathcal{P}}$.

The algebroid $\mathcal{C}^\dagger$ admits the following description:
\begin{itemize}
\item[\emph{Objects:}] The algebroid $\mathcal{C}^\dagger$ has the same objects as $\mathcal{C}$; for an object $L\in\mathcal{C}$ we denote by $L^\dagger$ the corresponding object of $a\mathcal{C}$

\item[\emph{Morphisms:}] For $L_0,L_1\in\mathcal{C}$,  $\Hom_{\mathcal{C}^\dagger}(L_1^\dagger,L_0^\dagger) = a\Hom_\mathcal{C}(L_0,L_1)$.
\end{itemize}

A 1-morphism $F\colon \mathcal{C}_1 \to \mathcal{C}_0$ induces the 1-morphism $F^\dagger \colon \mathcal{C}_1^\dagger \to \mathcal{C}_0^\dagger$ with $F^\dagger(L^\dagger) = F(L)^\dagger$ and the effect on morphisms given by
\begin{multline*}
\Hom_{\mathcal{C}_0^\dagger}(L_1^\dagger,L_0^\dagger) = a\Hom_{\mathcal{C}_0}(L_0,L_1) \xrightarrow{F} a\Hom_{\mathcal{C}_1}(F(L_0),F(L_1)) \\
= \Hom_{\mathcal{C}_1^\dagger}(F(L_1)^\dagger,F(L_0)^\dagger)
= \Hom_{\mathcal{C}_1^\dagger}(F^\dagger(L_1^\dagger),F^\dagger(L_0^\dagger))
\end{multline*}

For $F, G \colon\mathcal{C}_1 \to \mathcal{C}_0$ and $L\in\mathcal{C}_1$,
\[
\Hom_{\mathcal{C}_0}(F^\dagger(L^\dagger),G^\dagger(L^\dagger)) = \Hom_{\mathcal{C}_0}(F(L)^\dagger, G(L)^\dagger) = a\Hom_{\mathcal{C}_0}(G(L),F(L)) .
\]
Hence,
\[
\Hom(F^\dagger,G^\dagger) = a\Hom(G,F),\ \ \text{and}\ \ \Hom_{\DQ_{X/\mathcal{P}}}(\mathcal{C}_1^\dagger, \mathcal{C}_0^\dagger) = a\Hom_{\DQ_{X/\mathcal{P}}}(\mathcal{C}_1, \mathcal{C}_0)^\op .
\]

For a functor $F \colon \mathcal{C} \to \mathcal{C}^\dagger$, the functor $F^\dagger \colon \mathcal{C}^\dagger \to \mathcal{C}^{\dagger\dagger} = \mathcal{C}$ is given on objects by $F^\dagger(L^\dagger) = F(L)$. For $L_0,L_1\in\mathcal{C}$ the map $F^\dagger \colon \Hom_{\mathcal{C}^\dagger}(L_1^\dagger,L_0^\dagger) \to \Hom_\mathcal{C}(F(L_1), F(L_0))$ is the map
$aF \colon a\Hom_\mathcal{C}(L_0,L_1) \to \Hom_\mathcal{C}(F(L_1), F(L_0))$

\begin{definition}
A \emph{self-duality structure} on a DQ-algebroid $\mathcal{C}$ is a 1-morphism $T \colon \mathcal{C} \to \mathcal{C}^\dagger$.
\end{definition}

\subsection{DQ-algebroids with self-duality structures}\label{subsection: DQ-algebroids with self-duality structures}
DQ-algebroids equipped with self-duality structures form a 2-category.
\begin{itemize}
\item[\emph{Objects:}] The objects are pairs $(\mathcal{C}, T)$, where $\mathcal{C}$ is a DQ-algebroid and $T$ is a self-duality structure on $\mathcal{C}$.

\item[\emph{1-morphisms:}]
Suppose that $(\mathcal{C}_i,T_i)$, $i=0,1$, are DQ-algebroids equipped with transposition structures. A 1-morphism $(F,\xi) \colon (\mathcal{C}_1,T_1) \to (\mathcal{C}_0,T_0)$ is a pair $(F,\xi)$, where $F\colon \mathcal{C}_1 \to \mathcal{C}_0$ is a 1-morphism and $\xi \colon T_0\circ F \to F^\dagger\circ T_1$ is a 2-morphism between 1-morphisms $\mathcal{C}_1 \to \mathcal{C}_0^\dagger$.

\item[\emph{Composition:}]
Suppose that $(\mathcal{C}_2,T_2) \xrightarrow{(G,\eta)} (\mathcal{C}_1,T_1) \xrightarrow{(F,\xi)} (\mathcal{C}_0,T_0)$ are 1-morphisms. The composition
\begin{equation}\label{transpostion 1-morphism composition}
T_0\circ F\circ G  \xrightarrow{\xi\diamond\id_G} F^\dagger\circ T_1\circ G \xrightarrow{\id_{F^\dagger}\diamond\eta} F^\dagger\circ G^\dagger\circ T_2 = (F\circ G)^\dagger\circ T_2
\end{equation}
gives rise to the 1-morphism $(F\circ G,\eqref{transpostion 1-morphism composition}) \colon (\mathcal{C}_2,T_2) \to (\mathcal{C}_0,T_0)$ defined to be the composition $(F,\xi) \circ (G,\eta)$.

\item[\emph{Identity:}]
The identity 1-morphism $\id_{(\mathcal{C},T)}$ is given by the pair $(\id_\mathcal{C},\id_{\id_T})$.

\item[\emph{2-morphisms:}]
Suppose that $(F_i,\xi_i)$, $i=0,1$, are 1-morphisms $(\mathcal{C}_1,T_1) \to (\mathcal{C}_0,T_0))$. A 2-morphism $f\colon (F_1,\xi_1) \to (F_0,\xi_0)$ is a 2-morphism $f \colon F_1 \to F_0$ such that
\[
(f^\dagger\diamond\id_{T_1})\circ\xi_0\circ(\id_{T_0}\diamond f) = \xi_1
\]

\item[\emph{Composition:}]
Horizontal and vertical compositions of 2-morphisms are defined as those in the 2-category of categories.
\end{itemize}

We denote by $\DQ_{X/\mathcal{P}}^{\omega,\mathtt{t}}$ the 2-stack with locally defined objects the pairs $(\mathcal{C}, T)$ with $\mathcal{C}\in\DQ_{X/\mathcal{P}}^\omega$ and 1- and 2-morphisms as above.

\begin{example}\label{example:self-duality and MW}
For a symplectic DQ-algebra $\mathbb{A}$, the DQ-algebroids $(\mathbb{A}^\dagger)^+$ and $(\mathbb{A}^+)^\dagger$ coincide. Thus, a morphism of DQ-algebras $\mathbb{A} \xrightarrow{\phi} \mathbb{A}^\dagger$ induces a self-duality structure on $\mathbb{A}^+$ via $\mathbb{A}^+ \xrightarrow{\phi^+} (\mathbb{A}^\dagger)^+ = (\mathbb{A}^+)^\dagger$.

Let $\mathbb{A}_{MW} = (\mathcal{O}_{X/\mathcal{P}}[[t]], \star)$ be the Moyal Weyl star-product of Example \ref{example: MW}. It is easy to verify that the identity map of $\mathcal{O}_{X/\mathcal{P}}[[t]]$ is an isomorphism $\mathbb{A}_{MW} \to \mathbb{A}_{MW}^\dagger$ and we shall identify $\mathbb{A}_{MW}^\dagger$ with $\mathbb{A}_{MW}$ from now on.

Hence, $(\mathbb{A}_{MW}^+)^\dagger = \mathbb{A}_{MW}^+$ and the identity morphism is a self-duality structure on $\mathbb{A}_{MW}^+$ which we denote by $T_{can}$.

In particular, any morphism $\phi\colon \mathbb{A}_{MW} \to \mathbb{A}_{MW}^{\dagger}$ is an automorphism of $\mathbb{A}_{MW}$. By Corollary \ref{cor: props of DQ alg} any automorphism of $\mathbb{A}_{MW}$ is inner. Therefore, any self-duality structure on $\mathbb{A}_{MW}^+$ is isomorphic to $T_{can}$.
\end{example}

\subsection{}
Suppose that $T$ is a self-duality structure on a symplectic DQ-algebroid $\mathcal{C}$ and $(F,\xi) \in \Aut_{\DQ_{X/\mathcal{P}}^{\omega,\mathtt{t}}}(\mathcal{C},T)$.

For $\alpha, \beta\in\shHom_{\shAut(\mathcal{C})}(\id_\mathcal{C},F)$ let $\langle \alpha, \beta\rangle_\xi \colon T \to T$ denote the composition
\[
T = T\circ\id_\mathcal{C} \xrightarrow{\id_T\diamond \alpha} T\circ F \xrightarrow{\xi} F^\dagger\circ T \xrightarrow{\beta^\dagger\diamond\id_T} \id_\mathcal{C}\circ T = T
\]
The assignment $(\alpha, \beta) \mapsto \langle \alpha, \beta\rangle_\xi$ defines a ``hermitian'' pairing
\begin{equation}\label{hermitian pairing}
\langle\cdot,\cdot\rangle_{(F,\xi)} \colon \shHom_{\shAut(\mathcal{C})}(\id_\mathcal{C},F)\times a\shHom_{\shAut(\mathcal{C})}(\id_\mathcal{C},F) \to \shEnd_{\shHom(\mathcal{C},\mathcal{C}^\dagger)}(T) = \mathbb{C}[[t]]^\times \ .
\end{equation}
Let
\begin{equation}\label{quadratic form}
\mathtt{q}_{(F,\xi)} \colon \shHom_{\shAut(\mathcal{C})}(\id_\mathcal{C},F) \to \mathbb{C}[[t]]^\times
\end{equation}
denote the  associated quadratic form given by $\mathtt{q}_{(F,\xi)}(\alpha) = \langle \alpha, \alpha\rangle_{(F,\xi)}$.

Let $\mathtt{q}_0 \colon \mathbb{C}[[t]]^\times \to \mathbb{C}[[t]]^\times$ denote the map $f(t) \mapsto f(t)\cdot f(-t)$.

\lemma
The map \eqref{quadratic form} satisfies $\mathtt{q}_{(F,\xi)}(f\cdot\alpha) = \mathtt{q}_0(f)\cdot\mathtt{q}_{(F,\xi)}(\alpha)$.
\endlemma
\begin{proof}
Since the pairing \eqref{hermitian pairing} satisfies $\langle f(t)\cdot\alpha, g(t)\cdot\beta\rangle_{(F,\xi)} = f(t)\cdot g(-t)\cdot\langle \alpha, \beta\rangle_{(F,\xi)}$, the associated quadratic form satisfies $\mathtt{q}_{(F,\xi)}(f(t)\cdot\alpha) = f(t)\cdot f(-t)\cdot\mathtt{q}_{(F,\xi)}(\alpha) = \mathtt{q}_0(f(t)) \cdot \mathtt{q}_{(F,\xi)}(\alpha)$.
\end{proof}
\begin{cor}
The pair $(\shHom_{\shAut(\mathcal{C})}(\id_\mathcal{C},F), \mathtt{q}_{(F,\xi)})$ is a $(\mathbb{C}[[t]]^\times \xrightarrow{\mathtt{q}_0} \mathbb{C}[[t]]^\times)$-torsor.
\end{cor}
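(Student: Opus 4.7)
The plan is to unpack what it means to be a torsor over the Picard stack associated with the two-term complex $\mathbb{C}[[t]]^\times \xrightarrow{\mathtt{q}_0} \mathbb{C}[[t]]^\times$ and to verify that the pair $(\shHom_{\shAut(\mathcal{C})}(\id_\mathcal{C},F), \mathtt{q}_{(F,\xi)})$ provides exactly this datum. Following the convention used just after Lemma \ref{lemma: nabla is curvature}, such a torsor is a $\mathbb{C}[[t]]^\times$-torsor $P$ together with a $\mathtt{q}_0$-equivariant map $\phi \colon P \to \mathbb{C}[[t]]^\times$, meaning $\phi(f\cdot p) = \mathtt{q}_0(f)\cdot\phi(p)$; note that $\mathtt{q}_0$ is a group homomorphism ($\mathtt{q}_0(fg)(t) = f(t)g(t)f(-t)g(-t)$), so this notion makes sense.

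First I would confirm that $\shHom_{\shAut(\mathcal{C})}(\id_\mathcal{C},F)$ is a $\mathbb{C}[[t]]^\times$-torsor. This is immediate from Proposition \ref{prop: symp dq 2-gerbe}(2): the monoidal equivalence $\shAut_{\DQ^\omega_{X/\mathcal{P}}}(\mathcal{C}) \simeq \mathbb{C}[[t]]^\times[1]$ realizes $\shAut(\mathcal{C})$ as the gerbe of $\mathbb{C}[[t]]^\times$-torsors, and the hom-sheaf between any two objects of such a gerbe is canonically a $\mathbb{C}[[t]]^\times$-torsor, the action being composition with $\shHom_{\shAut(\mathcal{C})}(\id_\mathcal{C},\id_\mathcal{C}) = \mathbb{C}[[t]]^\times$. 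This is precisely the action $(f,\alpha) \mapsto f\cdot\alpha$ appearing in the preceding lemma. (One also observes that each $\alpha \in \shHom_{\shAut(\mathcal{C})}(\id_\mathcal{C},F)$ is automatically an isomorphism of functors and $\xi$ is an isomorphism, so the composition defining $\mathtt{q}_{(F,\xi)}$ indeed lands in $\shAut(T) = \mathbb{C}[[t]]^\times$.)

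Second, the $\mathtt{q}_0$-equivariance of $\mathtt{q}_{(F,\xi)}$ is exactly the content of the preceding lemma, so the two pieces of data combine to yield the required torsor structure over the Picard stack $[\mathbb{C}[[t]]^\times \xrightarrow{\mathtt{q}_0} \mathbb{C}[[t]]^\times]$.

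I do not foresee any substantive obstacle here. The nontrivial work has already been carried out upstream: the construction of the hermitian pairing \eqref{hermitian pairing}, the passage to the associated quadratic form \eqref{quadratic form}, and the sesquilinearity identity verified in the preceding lemma. The corollary is a bookkeeping statement that repackages these facts in the Picard-stack language, exactly parallel to the corollary produced just after Lemma \ref{lemma: nabla is curvature}.
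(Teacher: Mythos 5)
Your proposal is correct and follows exactly the route the paper intends: the corollary is stated without proof as an immediate consequence of the preceding lemma, and your unpacking — $\shHom_{\shAut(\mathcal{C})}(\id_\mathcal{C},F)$ is a $\mathbb{C}[[t]]^\times$-torsor by Proposition \ref{prop: symp dq 2-gerbe}(2), and the $\mathtt{q}_0$-equivariance required by the definition of a $(A^0\xrightarrow{d}A^1)$-torsor in Appendix \ref{subsection: Picard stacks and complexes} is precisely the identity $\mathtt{q}_{(F,\xi)}(f\cdot\alpha)=\mathtt{q}_0(f)\cdot\mathtt{q}_{(F,\xi)}(\alpha)$ of the lemma — supplies exactly the missing bookkeeping. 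Your parenthetical observations (that $\mathtt{q}_0$ is a homomorphism and that the pairing lands in $\shAut(T)=\mathbb{C}[[t]]^\times$) are accurate and harmless additions.
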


The assignment $(F,\xi) \mapsto (\shHom_{\shAut(\mathcal{C})}(\id_\mathcal{C},F), \mathtt{q}_{(F,\xi)}i)$ extends to a morphism
\begin{equation}\label{aut to torsors with q}
\shAut_{\DQ_{X/\mathcal{P}}^{\omega,\mathtt{t}}}(\mathcal{C},T) \to (\mathbb{C}[[t]]^\times \xrightarrow{\mathtt{q}_0} \mathbb{C}[[t]]^\times)[1].
\end{equation}
The canonical morphism $\mathbb{C}[[t]]^\times[1] \to \shAut_{\DQ_{X/\mathcal{P}}^{\omega}}(\mathcal{C})$ lifts to the morphism
\begin{equation}\label{torsors with q to aut with sd}
(\mathbb{C}[[t]]^\times \xrightarrow{\mathtt{q}_0} \mathbb{C}[[t]]^\times)[1] \to \shAut_{\DQ_{X/\mathcal{P}}^{\omega,\mathtt{t}}}(\mathcal{C},T) \colon (S,\mathtt{q}) \mapsto (S\otimes(\cdot), \mathtt{q}).
\end{equation}

\begin{prop}\label{prop: algds with self-duality 2-torsor}
{\ }
\begin{enumerate}
\item The morphism \eqref{aut to torsors with q} and the morphism \eqref{torsors with q to aut with sd} are mutually quasi-inverse monoidal equivalences.

\item The 2-stack $\DQ_{X/\mathcal{P}}^{\omega,\mathtt{t}}$ is a $(\mathbb{C}[[t]]^\times \xrightarrow{\mathtt{q}_0} \mathbb{C}[[t]]^\times)[1]$-gerbe.
\end{enumerate}
\end{prop}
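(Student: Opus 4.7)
The plan is to follow the same pattern used to prove Proposition \ref{prop: algds with des 2-torsor}: first establish that the two morphisms are mutually quasi-inverse monoidal equivalences (part (1)), and then deduce the gerbe property (part (2)) from part (1) together with local existence and local connectedness.

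For part (1), I would begin by noting that \eqref{torsors with q to aut with sd} lifts the canonical monoidal equivalence $\mathbb{C}[[t]]^\times[1] \to \shAut_{\DQ_{X/\mathcal{P}}^\omega}(\mathcal{C})$ of Proposition \ref{prop: symp dq 2-gerbe}. The composition of \eqref{torsors with q to aut with sd} followed by \eqref{aut to torsors with q} sends $(S,\mathtt{q})$ to the pair consisting of $\shHom_{\shAut(\mathcal{C})}(\id_\mathcal{C}, S\otimes(\cdot))$ and the quadratic form $\mathtt{q}_{(S\otimes(\cdot),\mathtt{q})}$. The first entry is canonically identified with $S$ by Proposition \ref{prop: symp dq 2-gerbe}, and an unwinding of the pairing \eqref{hermitian pairing} shows that the induced quadratic form agrees with $\mathtt{q}$. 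The composition in the opposite order sends $(F,\xi)$ to $(\shHom_{\shAut(\mathcal{C})}(\id_\mathcal{C},F)\otimes(\cdot),\mathtt{q}_{(F,\xi)})$, which is naturally isomorphic to $(F,\xi)$ via the same Proposition \ref{prop: symp dq 2-gerbe}, with the quadratic form component preserved by construction. Monoidality is inherited from the underlying equivalence on $\shAut_{\DQ_{X/\mathcal{P}}^\omega}(\mathcal{C})$.

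For part (2), in view of the equivalence of $\shAut_{\DQ_{X/\mathcal{P}}^{\omega,\mathtt{t}}}(\mathcal{C},T)$ with $(\mathbb{C}[[t]]^\times \xrightarrow{\mathtt{q}_0}\mathbb{C}[[t]]^\times)[1]$ established in part (1), it remains to verify that $\DQ_{X/\mathcal{P}}^{\omega,\mathtt{t}}$ is locally non-empty and locally connected. Local non-emptiness follows directly from Example \ref{example:self-duality and MW}: every point of $X$ admits a Darboux neighborhood on which the Moyal-Weyl algebra $\mathbb{A}_{MW}$ is defined, and the pair $(\mathbb{A}_{MW}^+, T_{can})$ provides an object. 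For local connectedness, given two objects $(\mathcal{C}_1,T_1)$ and $(\mathcal{C}_0,T_0)$, the fact that $\DQ_{X/\mathcal{P}}^\omega$ is a gerbe (Proposition \ref{prop: symp dq 2-gerbe}) allows us, after restricting to a sufficiently small open subset, to choose a 1-morphism $F \colon \mathcal{C}_1 \to \mathcal{C}_0$ and further to identify both algebroids with $\mathbb{A}_{MW}^+$. We are then reduced to comparing self-duality structures on a single copy of $\mathbb{A}_{MW}^+$, and the final paragraph of Example \ref{example:self-duality and MW} shows that any two such are isomorphic, supplying the required 2-morphism $\xi \colon T_0\circ F \to F^\dagger\circ T_1$.

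The main obstacle I expect is the careful bookkeeping required for the equality $\mathtt{q}_{(S\otimes(\cdot),\mathtt{q})} = \mathtt{q}$ in part (1). Tracing the hermitian pairing \eqref{hermitian pairing} through the $\dagger$-construction involves the $a$-twist in the second argument, and one must check that the interaction between $(\cdot)^\dagger$ applied to the morphism $\beta$ and the scalar multiplication by $\mathbb{C}[[t]]^\times$ precisely reproduces the map $\mathtt{q}_0 \colon f(t)\mapsto f(t)f(-t)$. Once this compatibility is verified, the remainder of the argument is formal and parallels Proposition \ref{prop: algds with des 2-torsor}.
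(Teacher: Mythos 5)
Your proposal is correct and takes essentially the same route as the paper: the paper likewise declares part (1) clear and reduces part (2) to local non-emptiness and local connectedness, both obtained from the local equivalence with $\mathbb{A}_{MW}^+$ and Example \ref{example:self-duality and MW} (in particular the fact that every self-duality structure on $\mathbb{A}_{MW}^+$ is isomorphic to $T_{can}$). Your additional unwinding of the compositions and of the quadratic-form bookkeeping is just a more explicit rendering of what the paper leaves implicit.
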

\begin{proof}
It is clear that the morphisms \eqref{aut to torsors with q} and \eqref{torsors with q to aut with sd} are mutually quasi-inverse monoidal equivalences. In view of the equivalence $\shAut(\mathcal{C},T)) \cong (\mathbb{C}[[t]]^\times \xrightarrow{\mathtt{q}_0} \mathbb{C}[[t]]^\times)[1]$ it remains to show that $\DQ_{X/\mathcal{P}}^{\omega,\mathtt{t}}$ is locally non-empty and locally connected. 

Since, locally, every DQ-algebroid is equivalent to one of the form $\mathbb{A}_{MW}^+$ of Example \ref{example:self-duality and MW}, it follows that $\DQ_{X/\mathcal{P}}^{\omega,\mathtt{t}}$ is locally non-empty and locally connected.
\end{proof}

\subsection{Classical limits of self-duality structures}\label{subsection: Classical limits of self-duality structures}
We denote by $\gr^\times\mathcal{C}$ the stack associated to the prestack $i(\mathcal{C}/t\mathcal{C})$.

Recall that, according to \cite{QCL}, the classical limit functor
\[
\gr^\times \colon \DQ_{X/\mathcal{P}}^\omega \to \mathcal{O}_{X/\mathcal{P}}^\times[2] \colon \mathcal{C} \mapsto \gr^\times\mathcal{C},
\]
admits a canonical lifting
\[
\widetilde{\gr^\times} \colon \DQ_{X/\mathcal{P}}^\omega \to (\mathcal{O}_{X/\mathcal{P}}^\times \xrightarrow{d\log} \Omega_{X/\mathcal{P}}^1 \xrightarrow{d} \Omega_{X/\mathcal{P}}^{2,cl})[2]
\]
Since the map $\mathbb{C}^\times \to (\mathcal{O}^\times_{X/\mathcal{P}}\to\Omega^1_{X/\mathcal{P}}\to\Omega^{2,cl}_{X/\mathcal{P}})$ is a quasiisomorphism, we will regard the morphism $\widetilde{\gr^\times}$ as taking values in $\mathbb{C}^\times[2]$.

Suppose that $T$ is a self-duality structure on a symplectic DQ-algebroid $\mathcal{C} \in \DQ_{X/\mathcal{P}}^\omega$. The quasi-classical limit of $T$ is an equivalence
\[
\widetilde{\gr^\times}(T) \colon \widetilde{\gr^\times}\mathcal{C} \to \widetilde{\gr^\times}(\mathcal{C}^\dagger) = (\widetilde{\gr^\times}\mathcal{C})^\op = (\widetilde{\gr^\times}\mathcal{C})^{\otimes -1}
\]
The equivalence $\widetilde{\gr^\times}\mathcal{C} \xrightarrow{\widetilde{\gr^\times}(T)} (\widetilde{\gr^\times}\mathcal{C})^{\otimes -1}$ determines a structure of a $\mu_2$-gerbe on $\widetilde{\gr^\times}\mathcal{C}$, where $\mu_2 := \ker(\mathbb{C}^\times\xrightarrow{\lambda\mapsto\lambda^2}\mathbb{C}^\times)$ is the group of square roots of one. Denoting this structure by $\widetilde{\gr^\times}(\mathcal{C},T)$ we obtain the commutative diagram
\[
\begin{CD}
\DQ_{X/\mathcal{P}}^{\omega,\mathtt{t}} @>{\widetilde{\gr^\times}}>> \mu_2[2] \\
@VVV @VVV \\
\DQ_{X/\mathcal{P}}^\omega @>{\widetilde{\gr^\times}}>> \mathbb{C}^\times[2]
\end{CD}
\]

\section{Compatibility of DES with self-duality structures} \label{section: Compatibility of DES with self-duality structures}
\subsection{Duality and derivations}
Suppose that $A$ is an associative $\mathbb{C}$-algebra. We denote by $A^{Lie}$ the Lie algebra structure on $A$ given by the commutator bracket.
\begin{itemize}
\item Since the algebras $A$ and $A^\op$ share the underlying vector space, it follows that $\End_\mathbb{C}(A) = \End_\mathbb{C}(A^\op)$. Under this identification the subspaces $\Der_\mathbb{C}(A)$ and $\Der_\mathbb{C}(A^\op)$ coincide. Thus, $\Der_\mathbb{C}(A)=\Der_\mathbb{C}(A^\op)$ as Lie algebras.

\item The identity map is an isomorphism of Lie algebras $(A^{Lie})^\op = (A^\op)^{Lie}$.

\item Under these identifications the map of Lie algebras $\ad^\op \colon (A^\op)^{Lie} \to \Der_\mathbb{C}(A^\op)$ is given by $-\ad \colon A^{Lie} \to \Der_\mathbb{C}(A)$.
\end{itemize}

Recall that the $\mathbb{C}$-algebra automorphism $a \colon \mathbb{C}[[t]] \to \mathbb{C}[[t]]$ is given by $t \mapsto -t$. Conjugation by $a$ induces the Lie algebra automorphism $\Conj(a) \colon \Der_\mathbb{C}(\mathbb{C}[[t]]) \to \Der_\mathbb{C}(\mathbb{C}[[t]])$. Since $\Conj(a)(t\partial_t) = t\partial_t$, the automorphism $\Conj(a)$ restricts to the identity map on $\mathfrak{l} = \mathbb{C}\cdot t\partial_t$.

Let $A$ be an associative $\mathbb{C}[[t]]$-algebra. The $\mathbb{C}[[t]]$-algebras $(aA)^\op$ and $a(A^\op)$ coincide and we denote their common value by $A^\dagger$.
\begin{itemize}
\item Since the algebras $A$ and $A^\dagger$ share the underlying vector space, it follows that $\End_\mathbb{C}(A) = \End_\mathbb{C}(A^\dagger)$. Under this identification the subspaces $\Der_\mathbb{C}(A)$ and $\Der_\mathbb{C}(A^\dagger)$ coincide. Thus, $\Der_\mathbb{C}(A)=\Der_\mathbb{C}(A^\dagger)$ as Lie algebras.

\item The subspaces $\Der_{\mathbb{C}[[t]]}(A)$ and $\Der_{\mathbb{C}[[t]]}(A^\dagger)$ of $\Der_\mathbb{C}(A)=\Der_\mathbb{C}(A^\dagger)$ coincide. Thus, $\Der_{\mathbb{C}[[t]]}(A)=\Der_{\mathbb{C}[[t]]}(A^\dagger)$ as Lie algebras.
\end{itemize}

Suppose that $\mathbb{A}$ is a symplectic DQ-algebra.
\begin{itemize}
\item The subsheaves $\mathfrak{L}(\mathbb{A})$ and $\mathfrak{L}(\mathbb{A}^\dagger)$ of $\shEnd_\mathbb{C}(\mathbb{A})$ coincide. Thus, $\mathfrak{L}(\mathbb{A}) = \mathfrak{L}(\mathbb{A}^\dagger)$ as Lie algebras.
\item $\dfrac1t\mathbb{A}^\dagger = \dfrac1t\mathbb{A}^\op$ as Lie algebras.
\item The diagram
\[
\begin{CD}
\dfrac1t\mathbb{C}[[t]] @>>> \dfrac1t\mathbb{A} @>{ad}>> \Der_{\mathbb{C}[[t]]}(\mathbb{A}) @>>> \mathfrak{L}(\mathbb{A}) @>>> \mathfrak{l} \\
@V{-\id}VV @V{-\id}VV @| @| @| \\
\dfrac1t\mathbb{C}[[t]] @>>> \dfrac1t\mathbb{A^\dagger} @>{ad}>> \Der_{\mathbb{C}[[t]]}(\mathbb{A}^\dagger) @>>> \mathfrak{L}(\mathbb{A}^\dagger) @>>> \mathfrak{l}
\end{CD}
\]
is commutative.
\end{itemize}

\subsection{Duality and DES}
Suppose that $\mathbb{A}$ is a symplectic DQ-algebra. For $\widetilde{\mathfrak{L}} \in \DES(\mathbb{A})$, the sheaf $\widetilde{\mathfrak{L}}^\dagger$ is defined by the push-out diagram
\begin{equation}\label{push out des transpose}
\begin{CD}
\dfrac1t\mathbb{A} @>>> \widetilde{\mathfrak{L}} \\
@V{-\id}VV @VVV \\
\dfrac1t\mathbb{A^\dagger} @>>> \widetilde{\mathfrak{L}}^\dagger
\end{CD}
\end{equation}
The assignment $\widetilde{\mathfrak{L}} \mapsto \widetilde{\mathfrak{L}}^\dagger$ extends to a functor
\[
(\cdot)^\dagger\colon\DES(\mathbb{A}) \to \DES(\mathbb{A}^\dagger) .
\]
Moreover, for a morphism $\phi\colon\mathbb{A}_1 \to \mathbb{A}_0$ of symplectic DQ-algebras the diagram
\[
\begin{CD}
\DES(\mathbb{A}_1) @>{(\cdot)^\dagger}>> \DES(\mathbb{A}_1^\dagger) \\
@V{\phi_*}VV @VV{\phi_*^\dagger}V \\
\DES(\mathbb{A}_0) @>{(\cdot)^\dagger}>> \DES(\mathbb{A}_0^\dagger) 
\end{CD}
\]
is commutative.

Suppose that $\mathcal{C}$ is a symplectic DQ-algebroid. For $L\in\mathcal{C}$, $\widetilde{\mathfrak{L}} \in \DES(\shEnd_\mathcal{C}(L))$ let $(L,\widetilde{\mathfrak{L}})^\dagger := (L^\dagger, \widetilde{\mathfrak{L}}^\dagger)$.

For $\nabla \in \DES(\mathcal{C})$, the assignment $L^\dagger \mapsto \nabla(L)^\dagger$ defines a DES on $\mathcal{C}^\dagger$ which we denote by $\nabla^\dagger$. The assignment $\nabla \mapsto \nabla^\dagger$ extends to a morphism of stacks
\[
(\cdot)^\dagger\colon\shDES(\mathcal{C}) \to \shDES(\mathcal{C}^\dagger) .
\]

\definition
Let $\mathcal{C}$ be a symplectic DQ-algebroid, let $T$ be a self-duality structure on $\mathcal{C}$, and let $\nabla \in \DES(\mathcal{C})$.

A \emph{compatibility} between $T$ and $\nabla$ is a 2-morphism $\tau \colon \widetilde{T}\circ\nabla \to \nabla^\dagger\circ T$ such that the pair $(T,\tau)$ is a 1-morphism $(\mathcal{C},\nabla) \to (\mathcal{C}^\dagger,\nabla^\dagger)$ of DQ-algebroids with DES (see \ref{subsection: Symplectic DQ-algebroids with DES}).
\enddefinition

\subsection{Symplectic DQ-algebroids with compatible self-duality and DES}
Symplectic DQ-algebroids equipped with compatible self-duality and dilation-equivariance structures form a 2-category in the following manner.
\begin{itemize}
\item[\emph{Objects:}] The objects are quadruples $(\mathcal{C}, \nabla, T, \tau)$, where $\mathcal{C}$ is a symplectic DQ-algebroid, $\nabla$ is a DES on $\mathcal{C}$, $T$ is a self-duality structure on $\mathcal{C}$, and $\tau$ is a compatibility between $T$ and $\nabla$.

\item[\emph{1-morphisms:}] Suppose that $(\mathcal{C}_i, \nabla_i, T_i, \tau_i)$, $i = 0,1$, are objects as above. A 1-morphism $(\mathcal{C}_1, \nabla_1, T_1, \tau_1) \to (\mathcal{C}_0, \nabla_0, T_0, \tau_0)$ is a triple $(F,\theta,\xi)$ such that  $(F,\theta)$ is a 1-morphism $(\mathcal{C}_1, \nabla_1) \to (\mathcal{C}_0, \nabla_0)$ of DQ-algebroids with DES and $(F,\xi)$ is a 1-morphism $(\mathcal{C}_1, T_1) \to (\mathcal{C}_0, T_0)$ of DQ-algebroids with a self-duality structures (see \ref{subsection: DQ-algebroids with self-duality structures}) subject to the commutativity condition
\begin{equation}\label{eqn:condition-compatibility}
	\begin{tikzcd}
		& \widetilde{T}_{0} \circ \nabla_{0} \circ F \arrow[dr, "\tau_{0} \diamond \id{}_{F}"] & \\
		\widetilde{T}_{0} \circ \widetilde{F} \circ \nabla_{1} \arrow[ur, "\id{}_{\widetilde{T}_{0}} \diamond \theta"] \arrow[d, "\widetilde{\xi}\diamond \id{}_{\nabla_{1}}"] & & \nabla_{0}^{\dagger} \circ T_{0} \circ F \arrow[d, "\id{}_{\nabla_{0}^{\dagger}} \diamond \xi"] \\
		\widetilde{F}^{\dagger} \circ \widetilde{T}_{1} \circ \nabla_{1} \arrow[dr, "\id{}_{\widetilde{F}^{\dagger}} \diamond \tau_{1}"] & & \nabla_{0}^{\dagger} \circ F^{\dagger} \circ T_{1} \\
		& \widetilde{F}^{\dagger} \circ \nabla_{1}^{\dagger} \circ T_{1} \arrow[ur, "\theta^{\dagger} \diamond \id{}_{T_{1}}"] &
	\end{tikzcd}
\end{equation}

\item[\emph{Composition:}] Suppose that $(\mathcal{C}_2, \nabla_2, T_2, \tau_2) \xrightarrow{(G,\rho,\eta)} (\mathcal{C}_1, \nabla_1, T_1, \tau_1) \xrightarrow{(F,\theta,\xi)} (\mathcal{C}_0, \nabla_0, T_0, \tau_0)$ are 1-morphisms. The composition $(F,\theta,\xi)\circ(G,\rho,\eta)$ is given by $(F\circ G, \eqref{composition 1-morphism des}, \eqref{transpostion 1-morphism composition})$.

\item[\emph{Identity:}] The identity automorphism of $(\mathcal{C}, \nabla, T, \tau)$ is given by the triple $(\id_\mathcal{C},\id_\nabla,\id_T)$.

\item[\emph{2-morphisms:}] Suppose that $(F_i,\theta_i,\xi_i)$, $i = 0,1$, are 1-morphisms $(\mathcal{C}_1, \nabla_1, T_1, \tau_1) \to (\mathcal{C}_0, \nabla_0, T_0, \tau_0)$. A 2-morphism $f\colon (F_1,\theta_1,\xi_1) \to (F_0,\theta_0,\xi_0)$ is a 2-morphism $f\colon F_1 \to F_0$ such that $f\colon (F_1,\theta_1) \to (F_0,\theta_0)$ is a 2-morphism between 1-morphisms $(\mathcal{C}_1, \nabla_1) \to (\mathcal{C}_0, \nabla_0)$ of DQ-algebroids with DES and $f\colon (F_1,\xi_1) \to (F_0,\xi_0)$ is a 2-morphism between 1-morphisms $(\mathcal{C}_1, T_1) \to (\mathcal{C}_0, T_0)$ of DQ-algebroids with a self-duality structures.

\item[\emph{Composition:}] Horizontal and vertical composition of 2-morphisms are defined
as in the 2-category of all categories.
\end{itemize}
We denote by $\widetilde{\DQ_{X/\mathcal{P}}^{\omega,\mathtt{t}}}$ the 2-stack with locally defined objects $(\mathcal{C}, \nabla, T, \tau)$ with $\mathcal{C}_i\in\DQ_{X/\mathcal{P}}^\omega$, and 1- and 2-morphisms as above.

\begin{example}\label{example: compatibility MW}
Let $\widetilde{\mathfrak{L}}$ be a DES on $\mathbb{A}_{MW}$ associated to a splitting of \eqref{ses extended derivations}. Let $\nabla$ denote the associated DES on $\mathbb{A}_{MW}^{+}$ as described in Example \ref{example: DES on MW}. The identification $(\mathbb{A}_{MW}^{+})^{\dagger} = \mathbb{A}_{MW}^{+}$, induces the identifications $\widetilde{(\mathbb{A}_{MW}^{+})^{\dagger}} = \widetilde{\mathbb{A}_{MW}^{+}}$, $T_{can} = \id$, $\widetilde{T_{can}} = \id$ and $\nabla^\dagger = \nabla$. Then, the 2-morphism $\id \colon \nabla \to \nabla$ is a compatibility between $T_{can}$ and $\nabla$.

In fact any two quadruples $Q_i := (\mathbb{A}_{MW}^{+}, T_i, \nabla_i, \tau_i)$, $i=0,1$, are isomorphic. Without loss of generality we may assume that $T_0=T_1=T_{can}$ and $\nabla_0=\nabla_1$ are associated to a split DES  $\widetilde{\mathfrak{L}} = \dfrac1t\mathbb{A}_{MW}\oplus\mathfrak{l}$ as above. If $(\id, \theta, \id) \colon Q_1 \to Q_0$ is a 1-morphism, the condition \eqref{eqn:condition-compatibility} reduces to commutativity of the diagram of morphisms of DES
\begin{equation}\label{compatibilities on MW}
\begin{CD}
\widetilde{\mathfrak{L}} @>{\theta}>> \widetilde{\mathfrak{L}} \\
@V{\tau_1}VV @VV{\tau_0}V \\
\widetilde{\mathfrak{L}}^{\dagger} @>{\theta^{\dagger}}>> \widetilde{\mathfrak{L}}^{\dagger}
\end{CD}
\end{equation}
In terms of the identifications $\widetilde{\mathfrak{L}} = \dfrac1t\mathbb{A}_{MW}\oplus\mathfrak{l}$ and $\widetilde{\mathfrak{L}}^\dagger = \dfrac1t\mathbb{A}_{MW}\oplus\mathfrak{l}$ given by the induced splitting, the isomorphism $\widetilde{\mathfrak{L}} \xrightarrow{\cong} \widetilde{\mathfrak{L}}^\dagger$ in \eqref{push out des transpose} is given by $\begin{bmatrix} -1 & 0 \\ 0 & 1 \end{bmatrix}$. Therefore, if $\theta\colon \widetilde{\mathfrak{L}} \to \widetilde{\mathfrak{L}}$ is represented by $\begin{bmatrix} 1 & \phi \\ 0 & 1 \end{bmatrix}$, then $\theta^\dagger\colon \widetilde{\mathfrak{L}}^\dagger \to \widetilde{\mathfrak{L}}^\dagger$ is represented by $\begin{bmatrix} 1 & -\phi \\ 0 & 1 \end{bmatrix}$.

If $\tau_i$ is represented by $\begin{bmatrix} 1 & \psi_i \\ 0 & 1 \end{bmatrix}$, the condition \eqref{compatibilities on MW} is equivalent to $\psi_0 + \phi = \psi_1 = \phi$, which is to say $\phi = \dfrac12(\psi_1-\psi_0)$.
\end{example}

\subsection{}
Suppose that $(\mathcal{C}, \nabla, T, \tau)\in \widetilde{\DQ_{X/\mathcal{P}}^{\omega,\mathtt{t}}}$.

Since, for $\nu\in\shEnd_{\Hom(\mathcal{C},\mathcal{C}^\dagger)}(T)$,
\[
\id_{p_{\mathcal{C}^\dagger}}\diamond\tau = \id_{p_{\mathcal{C}^\dagger}}\diamond(\tau\circ(\widetilde{\nu}\diamond\id_\nabla))
\]
it follows that the map
\begin{equation}\label{compatibility curv}
\shEnd_{\Hom(\mathcal{C},\mathcal{C}^\dagger)}(T) \to \shHom_{\shHom(i\mathcal{C},\widetilde{\mathcal{C}^\dagger})}(\widetilde{T}\circ\nabla, \nabla^\dagger\circ T) \colon \nu \mapsto \tau - \tau\circ(\widetilde{\nu}\diamond\id_\nabla)
\end{equation}
factors through the canonical map $\dfrac1t\mathbb{C}[[t]] \to \shHom_{\shHom(i\mathcal{C},\widetilde{\mathcal{C}^\dagger})}(\widetilde{T}\circ\nabla, \nabla^\dagger\circ T)$. Moreover, the diagram
\[
\begin{tikzcd}
	\mathbb{C}[[t]]^\times \arrow[r, "t\partial_t\log"] \arrow[d, "\simeq"] & \dfrac1t\mathbb{C}[[t]] \arrow[d] \\
	\shEnd_{\shHom(\mathcal{C},\mathcal{C}^\dagger)}(T) \arrow[r, "\eqref{compatibility curv}"] & \shHom_{\shHom(i\mathcal{C},\widetilde{\mathcal{C}^\dagger})}(\widetilde{T}\circ\nabla, \nabla^\dagger\circ T)
\end{tikzcd}
\]
is commutative.

Suppose that $(F,\theta,\xi)$ is an automorphism of $(\mathcal{C}, \nabla, T, \tau)$. Then, 
\begin{equation}\label{q and c together torsors}
\begin{tikzcd}
	\shHom_{\shAut(\mathcal{C})}(\id_\mathcal{C}, F) \arrow[r, "{c(F, \theta)}"] \arrow[d, "{\mathtt{q}_{(F,\xi)}}"] & \dfrac1t\mathbb{C}[[t]] \arrow[d, "\tau\circ\avg"] \\
	\shEnd_{\shHom(\mathcal{C},\mathcal{C}^\dagger)}(T) \arrow[r, "\eqref{compatibility curv}"] & \shHom_{\shHom(i\mathcal{C},\widetilde{\mathcal{C}^\dagger})}(\widetilde{T}\circ\nabla, \nabla^\dagger\circ T)
\end{tikzcd}
\end{equation}
where $\avg(f) = f + a(f)$, is a commutative diagram of morphisms of torsors relative to the commutative diagram of morphisms of sheaves of groups
\begin{equation}\label{q and c together groups}
\begin{CD}
\mathbb{C}[[t]]^\times @>{t\partial_t\log}>> \dfrac1t\mathbb{C}[[t]] \\
@V{\mathtt{q}_0}VV @VV{\avg}V \\
\mathbb{C}[[t]]^\times @>{t\partial_t\log}>> \dfrac1t\mathbb{C}[[t]]
\end{CD}
\end{equation}
The commutativity of \eqref{q and c together torsors} and \eqref{q and c together groups} implies that the pair $(\shHom_{\shAut(\mathcal{C})}(\id_\mathcal{C}, F), (\mathtt{q}_{(F,\xi)},\, c(F,\theta)))$ is a torsor under
\begin{multline*}
\tau_{\leqslant 1}(\mathbb{C}[[t]]^\times \xrightarrow{(\mathtt{q}_0,\, t\partial_t\log)} \mathbb{C}[[t]]^\times\oplus\dfrac1t\mathbb{C}[[t]]  \xrightarrow{t\partial_t\log - \avg} \dfrac1t\mathbb{C}[[t]]) \\
= (\mathbb{C}[[t]]^\times \xrightarrow{(\mathtt{q}_0,\, t\partial_t\log)} \ker(t\partial_t\log - \avg))
\end{multline*}
\lemma\label{lemma: cohomology of truncation}
The map of complexes
\[
\left[ \begin{CD}\dfrac1t\mathbb{C} \\ @A{0}AA \\ \mu_2 \end{CD} \right] \longrightarrow \left[ \begin{CD}\ker(t\partial_t\log - \avg)) \\ @AA{(\mathtt{q}_0,\, t\partial_t\log)}A \\ \mathbb{C}[[t]]^\times \end{CD} \right]
\]
with components given by $\dfrac1t\mathbb{C} \xrightarrow{(1,\,\frac1t\mathbb{C}\hookrightarrow \frac1t\mathbb{C}[[t]])} \ker(t\partial_t\log - \avg))$ and the inclusion $\mu_2 \hookrightarrow \mathbb{C}[[t]]^\times$
is a quasiisomorphism.
\endlemma
\begin{proof}
We leave it to the reader to show that $ker(\mathtt{q}_0,\, t\partial_t\log) = \mu_2$.

Let $f\in\mathbb{C}[[t]]^\times$, $g\in\dfrac1t\mathbb{C}[[t]]$. Then, $(f,g)\in \ker(t\partial_t\log - \avg))$ if and only if $t\partial_t\log f = \avg(g)$.

Suppose that $(f,g)\in \ker(t\partial_t\log - \avg))$. Since $\im(t\partial_t\log) = t\mathbb{C}[[t]]$, we may assume that $g = \dfrac{a}t + b \in\dfrac1t\mathbb{C}\oplus\mathbb{C}$ so that $t\partial_t\log(f) = \avg(g) = 2b$ is constant which must be equal to zero since $t\partial_t\log(f) \in t\mathbb{C}[[t]]$. This implies that $f$ is constant, i.e. $f\in\mathbb{C}^\times$ and, hence, $(f,g)$ is cohomologous to a pair of the form $(1,\dfrac1t\lambda)$, $\lambda\in\mathbb{C}$.
\end{proof}


The assignment $(F,\theta,\xi) \mapsto (\shHom_{\shAut(\mathcal{C})}(\id_\mathcal{C}, F),\, (\mathtt{q}_0,\, t\partial_t\log))$ extends to a morphism
\begin{equation}\label{aut with des sd comp to torsors}
\shAut_{\widetilde{\DQ_{X/\mathcal{P}}^{\omega,\mathtt{t}}}}(\mathcal{C}, \nabla, T, \tau) \to (\mathbb{C}[[t]]^\times \xrightarrow{(\mathtt{q}_0,\, t\partial_t\log)} \ker(t\partial_t\log - \avg))[1]
\end{equation}
The canonical morphism $\mathbb{C}[[t]]^\times \to \shAut_{\DQ_{X/\mathcal{P}}^\omega}(\mathcal{C})$ lifts to the morphism
\begin{equation}\label{torsors to aut with des sd comp}
(\mathbb{C}[[t]]^\times \xrightarrow{(\mathtt{q}_0,\, t\partial_t\log)} \ker(t\partial_t\log - \avg))[1] \to \shAut_{\widetilde{\DQ_{X/\mathcal{P}}^{\omega,\mathtt{t}}}}(\mathcal{C}, \nabla, T, \tau) 
\end{equation}
given by
\[
(S,(\mathtt{q},c)) \mapsto (S\otimes(\cdot), c, \mathtt{q})
\]

\begin{prop}\label{prop: dq with des sd comp}
{\ }
\begin{enumerate}
\item The morphism \eqref{aut with des sd comp to torsors} and the morphism \eqref{torsors to aut with des sd comp} are mutually quasi-inverse monoidal equivalences.

\item The 2-stack $\widetilde{\DQ_{X/\mathcal{P}}^{\omega,\mathtt{t}}}$ is a $(\mathbb{C}[[t]]^\times \xrightarrow{(\mathtt{q}_0,\, t\partial_t\log)} \ker(t\partial_t\log - \avg))[1]$-gerbe.
\end{enumerate}
\end{prop}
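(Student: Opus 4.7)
My plan is to imitate the argument used for Proposition \ref{prop: algds with des 2-torsor} and Proposition \ref{prop: algds with self-duality 2-torsor}, since the present proposition combines the two in a straightforward manner. The proof splits into three tasks: verify the two morphisms are mutually quasi-inverse; verify local non-emptiness; verify local connectedness.

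For part (1), the verification that \eqref{aut with des sd comp to torsors} and \eqref{torsors to aut with des sd comp} are mutually quasi-inverse is essentially formal. Given a torsor-with-data $(S,(\mathtt{q},c))$ under $\mathbb{C}[[t]]^\times \xrightarrow{(\mathtt{q}_0,\, t\partial_t\log)} \ker(t\partial_t\log - \avg)$, the automorphism $F := S\otimes(\cdot)$ has $\shHom_{\shAut(\mathcal{C})}(\id_\mathcal{C},F)$ canonically identified with $S$, and under this identification the curvature map $c(F,\theta)$ of Lemma \ref{lemma: nabla is curvature} reproduces $c$ while the quadratic form $\mathtt{q}_{(F,\xi)}$ reproduces $\mathtt{q}$ by construction. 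In the reverse direction, Propositions \ref{prop: algds with des 2-torsor}(1) and \ref{prop: algds with self-duality 2-torsor}(1) already show that any $(F,\theta)$ and $(F,\xi)$ are recovered from their associated $(\shHom,c)$ and $(\shHom,\mathtt{q})$ data respectively, so combining them yields the required quasi-inverse after checking compatibility with the diagram in \eqref{eqn:condition-compatibility}, which is automatic since $F$ is of the form $S\otimes(\cdot)$.

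For part (2), I need to show that $\widetilde{\DQ_{X/\mathcal{P}}^{\omega,\mathtt{t}}}$ is a gerbe in the indicated Picard 2-stack. In view of part (1), the equivalence $\shAut_{\widetilde{\DQ_{X/\mathcal{P}}^{\omega,\mathtt{t}}}}(\mathcal{C},\nabla,T,\tau) \cong (\mathbb{C}[[t]]^\times \xrightarrow{(\mathtt{q}_0,\, t\partial_t\log)} \ker(t\partial_t\log - \avg))[1]$ holds, so it remains to establish local non-emptiness and local connectedness. Local non-emptiness follows from Example \ref{example: compatibility MW}: in a Darboux chart we may pass to $\mathbb{A}_{MW}^+$, pick a splitting of \eqref{ses extended derivations} to obtain a DES $\nabla$, take $T_{can}$ as the self-duality structure, and use the identity as compatibility $\tau$, yielding the quadruple $(\mathbb{A}_{MW}^+,\nabla,T_{can},\id)$.

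Local connectedness is the step that requires genuine work and is the main obstacle. Again by Darboux, I may work locally on $\mathbb{A}_{MW}^+$ and (by Propositions \ref{prop: algds with des 2-torsor} and \ref{prop: algds with self-duality 2-torsor}) assume that two quadruples $Q_0,Q_1$ share the same $\mathcal{C} = \mathbb{A}_{MW}^+$, the same self-duality $T = T_{can}$, and the same split DES $\nabla$ coming from $\widetilde{\mathfrak{L}} = \tfrac1t\mathbb{A}_{MW}\oplus\mathfrak{l}$; only the compatibilities $\tau_0,\tau_1$ may differ. Example \ref{example: compatibility MW} then provides the key computation: writing $\tau_i$ as a unipotent matrix $\bigl[\begin{smallmatrix} 1 & \psi_i \\ 0 & 1\end{smallmatrix}\bigr]$, the required 1-isomorphism $(\id,\theta,\id)$ exists precisely when $\theta$ is represented by $\bigl[\begin{smallmatrix} 1 & \phi \\ 0 & 1\end{smallmatrix}\bigr]$ with $\phi = \tfrac12(\psi_1 - \psi_0)$, and such $\phi$ is always available. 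Thus any two quadruples are locally isomorphic, completing the proof.
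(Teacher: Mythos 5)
Your proposal is correct and follows the same route as the paper: the paper likewise declares part (1) to be clear from the constructions and derives both local non-emptiness and local connectedness from Example \ref{example: compatibility MW}, whose content (reduction to $\mathbb{A}_{MW}^+$ with $T_{can}$ and a split DES, and the computation $\phi = \tfrac12(\psi_1-\psi_0)$) you have essentially re-derived. No discrepancies to report.
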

\begin{proof}
It is clear that \eqref{aut with des sd comp to torsors} and the morphism \eqref{torsors to aut with des sd comp} are mutually quasi-inverse monoidal equivalences.

Example \ref{example: compatibility MW} shows that $\widetilde{\DQ_{X/\mathcal{P}}^{\omega,\mathtt{t}}}$ is locally non-empty and locally connected and therefore a $\shAut_{\widetilde{\DQ_{X/\mathcal{P}}^{\omega,\mathtt{t}}}}(\mathcal{C}, \nabla, T, \tau) \cong (\mathbb{C}[[t]]^\times \xrightarrow{(\mathtt{q}_0,\, t\partial_t\log)} \ker(t\partial_t\log - \avg))[1]$-gerbe.
\end{proof}

\section{Classification of symplectic DQ-algebroids} \label{section: Classification of symplectic DQ-algebroids}
In what follows we work with a fixed non-degenerate Poisson bi-vector $\pi\in\Gamma(X;\bigwedge^2\mathcal{T}_{X/\mathcal{P}})$ and set $\omega = \pi^{-1}$.

\subsection{Canonical quantization}
From Proposition \ref{prop: dq with des to antiderivs} and Proposition \ref{prop: dq with des sd comp} we know that the morphism of quasi-classical limit (see \ref{subsection: Classical limits of self-duality structures}) and the morphism \eqref{dq with des to antiderivs} give rise to the equivalence
\begin{equation}\label{DQ with all struct qcl}
\widetilde{\DQ_{X/\mathcal{P}}^{\omega,\mathtt{t}}} \xrightarrow{(\widetilde{\gr^\times},\ \eqref{dq with des to antiderivs})} \mu_2[2] \times\int\frac1t\omega
\end{equation}
which makes the diagram
\[
\begin{CD}
\widetilde{\DQ_{X/\mathcal{P}}^{\omega,\mathtt{t}}} @>{(\widetilde{\gr^\times},\ \eqref{dq with des to antiderivs})}>> \mu_2[2] \times\int\frac1t\omega \\
@VVV @VVV \\
\DQ_{X/\mathcal{P}}^\omega @>{\widetilde{\gr^\times}}>> \mathbb{C}^\times[2]
\end{CD}
\]
commutative.

\begin{thm}\label{thm: canonical quantization}
Every $\mu_2$-gerbe on $X$ admits a canonical quantization.
\end{thm}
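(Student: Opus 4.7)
The plan is to use the equivalence \eqref{DQ with all struct qcl} to construct a canonical 2-functor $Q \colon \mu_2[2] \to \DQ_{X/\mathcal{P}}^\omega$ assigning a DQ-algebroid $Q(\mathcal{G})$ to each $\mu_2$-gerbe $\mathcal{G}$, and to interpret this $Q(\mathcal{G})$ as the canonical quantization. Composing a quasi-inverse of \eqref{DQ with all struct qcl} with the forgetful 2-functor $\widetilde{\DQ_{X/\mathcal{P}}^{\omega,\mathtt{t}}} \to \DQ_{X/\mathcal{P}}^\omega$ yields
\begin{equation*}
F \colon \mu_2[2] \times \int\frac{1}{t}\omega \longrightarrow \DQ_{X/\mathcal{P}}^\omega.
\end{equation*}
The heart of the argument is to show that $F$ factors as $Q \circ \mathrm{pr}_1$, where $\mathrm{pr}_1$ is the projection onto $\mu_2[2]$. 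This reduces the existence of the canonical quantization to a descent statement along the $\frac{1}{t}\mathbb{C}[1]$-torsor $\int\frac{1}{t}\omega$, which in particular need not have a global section.

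To establish the factorization, I would compare the induced maps on the bands of the two 2-gerbes. By Propositions \ref{prop: dq with des sd comp} and \ref{prop: symp dq 2-gerbe} together with Lemma \ref{lemma: cohomology of truncation}, the band of $\widetilde{\DQ_{X/\mathcal{P}}^{\omega,\mathtt{t}}}$ is quasi-isomorphic to $\mu_2[1] \oplus \frac{1}{t}\mathbb{C}[1]$ while the band of $\DQ_{X/\mathcal{P}}^\omega$ is $\mathbb{C}[[t]]^\times[1]$. Chasing the commutative diagram displayed just before the theorem statement, the induced morphism of bands is identified with the inclusion $\mu_2 \hookrightarrow \mathbb{C}[[t]]^\times$ on the first summand and is zero on the $\frac{1}{t}\mathbb{C}[1]$ summand. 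This is the formal counterpart of the obvious observation that altering the DES, the self-duality, or the compatibility datum of an object of $\widetilde{\DQ_{X/\mathcal{P}}^{\omega,\mathtt{t}}}$ does not change the underlying DQ-algebroid up to canonical isomorphism. Hence the action of $\frac{1}{t}\mathbb{C}[1]$ on $\int\frac{1}{t}\omega$ is absorbed by $F$, and the standard 2-gerbe descent formalism from Appendix \ref{appendix: algebroid stacks} yields the factorization $F = Q \circ \mathrm{pr}_1$.

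The main obstacle is making the descent coherent. Concretely, given $\mathcal{G}$ and local sections $\sigma_i$ of $\int\frac{1}{t}\omega$ on an open cover $\{U_i\}$, each pair $(\mathcal{G}|_{U_i}, \sigma_i)$ corresponds via \eqref{DQ with all struct qcl} to an object of $\widetilde{\DQ_{X/\mathcal{P}}^{\omega,\mathtt{t}}}(U_i)$ whose underlying DQ-algebroid $\mathcal{C}_i \in \DQ_{X/\mathcal{P}}^\omega(U_i)$ is what one wants to glue; on overlaps the $\frac{1}{t}\mathbb{C}[1]$-difference $\sigma_j - \sigma_i$ gets sent by $F$ to a canonical 1-morphism $\mathcal{C}_i|_{U_{ij}} \simeq \mathcal{C}_j|_{U_{ij}}$ in $\DQ_{X/\mathcal{P}}^\omega$, and on triple overlaps one must verify that the resulting 2-cocycle is trivial. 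This is routine in view of the explicit local model $\mathbb{A}_{MW}^+$ of Example \ref{ex:MW defines a sympl DQ-algebroid} and the parameterization of DES and compatibilities recorded in Examples \ref{example: DES on MW} and \ref{example: compatibility MW}.
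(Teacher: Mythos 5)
Your proposal is correct and follows essentially the same route as the paper: the factorization $F = Q\circ\mathrm{pr}_1$ you seek is exactly the paper's observation that the forgetful functor kills the $\dfrac1t\mathbb{C}$ part of the band and therefore factors through the push-out $\widetilde{\DQ_{X/\mathcal{P}}^{\omega,\mathtt{t}}}/\frac1t\mathbb{C}[1]$, which Lemma \ref{lemma: cohomology of truncation} identifies with $\mu_2[2]$ via $\widetilde{\gr^\times}$. Your explicit descent with local sections $\sigma_i$ just re-derives the universal property of this push-out along the quotient of bands, so the two arguments coincide in substance.
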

\begin{proof}
Let $\widetilde{\DQ_{X/\mathcal{P}}^{\omega,\mathtt{t}}}/\frac1t\mathbb{C}[1]$ denote the push-out of $\widetilde{\DQ_{X/\mathcal{P}}^{\omega,\mathtt{t}}}$ along the morphism of complexes
\[
\left[ \begin{CD}\ker(t\partial_t\log - \avg)) \\ @AA{(\mathtt{q}_0,\, t\partial_t\log)}A \\ \mathbb{C}[[t]]^\times \end{CD} \right]
\longrightarrow
\left[ \begin{CD}\ker(t\partial_t\log - \avg))/\dfrac1t\mathbb{C} \\ @AA{(\mathtt{q}_0,\, t\partial_t\log)}A \\ \mathbb{C}[[t]]^\times \end{CD} \right]
\]
It follows from Lemma \ref{lemma: cohomology of truncation} that $\widetilde{\DQ_{X/\mathcal{P}}^{\omega,\mathtt{t}}}/\frac1t\mathbb{C}[1]$ is a $\mu_2[2]$-torsor. The equivalence \eqref{DQ with all struct qcl} induces the equivalence $\widetilde{\DQ_{X/\mathcal{P}}^{\omega,\mathtt{t}}}/\frac1t\mathbb{C}[1]\xrightarrow{\widetilde{\gr^\times}} \mu_2[2]$
which makes the diagram
\[
\begin{CD}
\widetilde{\DQ_{X/\mathcal{P}}^{\omega,\mathtt{t}}}/\frac1t\mathbb{C}[1] @>{\widetilde{\gr^\times}}>> \mu_2[2] \\
@VVV @VVV \\
\DQ_{X/\mathcal{P}}^\omega @>{\widetilde{\gr^\times}}>> \mathbb{C}^\times[2]
\end{CD}
\]
commutative. Hence, there are morphisms
\[
\DQ_{X/\mathcal{P}}^\omega(X) \longleftarrow \widetilde{\DQ_{X/\mathcal{P}}^{\omega,\mathtt{t}}}/\textstyle{\frac1t}\mathbb{C}[1](X) \xrightarrow{\widetilde{\gr^\times}} \mu_2[2](X)
\]
where $\widetilde{\gr^\times}$ is an equivalence.
\end{proof}

We denote by $\mathcal{C}_{can} \in \DQ_{X/\mathcal{P}}^\omega(X)$ the image under the morphism $\widetilde{\DQ_{X/\mathcal{P}}^{\omega,\mathtt{t}}}/\frac1t\mathbb{C}[1](X) \to \DQ_{X/\mathcal{P}}^\omega(X)$ of the object of $\widetilde{\DQ_{X/\mathcal{P}}^{\omega,\mathtt{t}}}/\frac1t\mathbb{C}[1](X)$ which corresponds to $\mu_2[1] \in \mu_2[2](X)$. 

In what follows we refer to $\mathcal{C}_{can}$ as the \emph{canonical quantization}.

\subsection{Classification of symplectic DQ-algebroids}
For $\mathcal{C}_0, \mathcal{C}_1 \in \DQ^\omega_{X/\mathcal{P}}$ let $[\mathcal{C}_1 :\mathcal{C}_0] := \shHom_{\DQ^\omega_{X/\mathcal{P}}}(\mathcal{C}_0,\mathcal{C}_1) \in {\mathbb{C}[[t]]}^\times[2]$ by analogy with the difference class of \cite{D}.

Let $\DQ^\omega_{X/\mathcal{P}, 0}$ denote the fiber of $\widetilde{\gr^\times} \colon \DQ^\omega_{X/\mathcal{P}} \to \mathbb{C}^\times[2]$ over $\mathbb{C}^\times[1]$. The objects of $\DQ^\omega_{X/\mathcal{P}, 0}$ are quantizations of the trivial gerbe $\mathcal{O}^\times_{X/\mathcal{P}}$ equipped with the trivial flat structure.

\begin{thm}\label{thm: classification}
The morphisms
\begin{equation}\label{diff class equiv}
[\,\bullet\, :\mathcal{C}_{can}] \colon \DQ^\omega_{X/\mathcal{P}} \to {\mathbb{C}[[t]]}^\times[2] \colon \mathcal{C} \mapsto [\mathcal{C} :\mathcal{C}_{can}]
\end{equation}
and
\begin{equation}\label{twisting}
\mathbb{C}^\times[2]\times\DQ^\omega_{X/\mathcal{P}, 0} \to \DQ^\omega_{X/\mathcal{P}}\colon (\mathcal{S},\mathcal{C}) \mapsto \mathcal{S}\otimes\mathcal{C}
\end{equation}
are equivalences.
\end{thm}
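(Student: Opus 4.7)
The plan is to derive both statements from Proposition~\ref{prop: symp dq 2-gerbe}, which identifies $\DQ^\omega_{X/\mathcal{P}}$ as a $\mathbb{C}[[t]]^\times[1]$-gerbe; from Theorem~\ref{thm: canonical quantization}, which provides the global section $\mathcal{C}_{can}$; and from the canonically split short exact sequence of sheaves of groups
\[
1 \to 1 + t\mathbb{C}[[t]] \to \mathbb{C}[[t]]^\times \xrightarrow{f\mapsto f(0)} \mathbb{C}^\times \to 1,
\]
whose splitting is given by the inclusion of constants.

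For the equivalence \eqref{diff class equiv} I would invoke the general principle that an abelian $G[1]$-gerbe which admits a global section $s$ is canonically equivalent to the neutral gerbe $G[2]$ via the functor $x \mapsto \shHom(s, x)$. Here $G = \mathbb{C}[[t]]^\times$ and $s = \mathcal{C}_{can}$: the sheaf $[\mathcal{C} : \mathcal{C}_{can}] = \shHom_{\DQ^\omega_{X/\mathcal{P}}}(\mathcal{C}_{can}, \mathcal{C})$ is a torsor under $\shAut_{\DQ^\omega_{X/\mathcal{P}}}(\mathcal{C}_{can}) \cong \mathbb{C}[[t]]^\times[1]$ by Proposition~\ref{prop: symp dq 2-gerbe}(2), so it is an object of $\mathbb{C}[[t]]^\times[2]$. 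The resulting morphism \eqref{diff class equiv} is equivariant with respect to the bandings, and since both sides are locally equivalent to $\mathbb{C}[[t]]^\times[2]$ (by Proposition~\ref{prop: symp dq 2-gerbe}(3)), it is locally an equivalence, hence an equivalence.

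For \eqref{twisting}, the splitting above induces a canonical decomposition of Picard 2-stacks $\mathbb{C}[[t]]^\times[2] \simeq \mathbb{C}^\times[2] \times (1 + t\mathbb{C}[[t]])[2]$. Composing with \eqref{diff class equiv} gives
\[
\DQ^\omega_{X/\mathcal{P}} \simeq \mathbb{C}^\times[2] \times (1 + t\mathbb{C}[[t]])[2],
\]
and it remains to identify the factor $(1 + t\mathbb{C}[[t]])[2]$ with $\DQ^\omega_{X/\mathcal{P},0}$ and the group law with the map $(\mathcal{S},\mathcal{C})\mapsto \mathcal{S}\otimes\mathcal{C}$. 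The second identification is essentially tautological: the tensoring is induced by the action of $\mathbb{C}^\times[2]$ on $\DQ^\omega_{X/\mathcal{P}}$ through the inclusion of bandings $\mathbb{C}^\times \hookrightarrow \mathbb{C}[[t]]^\times$, which under \eqref{diff class equiv} is precisely one factor of the product decomposition.

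The main obstacle is the first identification, which amounts to checking that under \eqref{diff class equiv} the quasi-classical limit $\widetilde{\gr^\times}\colon \DQ^\omega_{X/\mathcal{P}} \to \mathbb{C}^\times[2]$ corresponds to the projection $\mathbb{C}[[t]]^\times[2] \to \mathbb{C}^\times[2]$ induced by $f \mapsto f(0)$; equivalently, that $\widetilde{\gr^\times}(\mathcal{C}_{can}) \simeq \mathbb{C}^\times[1]$, the trivial $\mathbb{C}^\times$-gerbe. This is where the construction of $\mathcal{C}_{can}$ in the proof of Theorem~\ref{thm: canonical quantization} is used: $\mathcal{C}_{can}$ is the image under $\widetilde{\DQ_{X/\mathcal{P}}^{\omega,\mathtt{t}}}/\tfrac1t\mathbb{C}[1](X) \to \DQ_{X/\mathcal{P}}^\omega(X)$ of the object corresponding to $\mu_2[1] \in \mu_2[2](X)$, and the commutative square displayed at the end of that proof identifies $\widetilde{\gr^\times}(\mathcal{C}_{can})$ with the image of $\mu_2[1]$ under the map $\mu_2[2]\hookrightarrow \mathbb{C}^\times[2]$ induced by $\mu_2 \hookrightarrow \mathbb{C}^\times$, which is the neutral $\mathbb{C}^\times$-gerbe. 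Granted this, the fiber $\DQ^\omega_{X/\mathcal{P},0}$ corresponds precisely to the factor $(1 + t\mathbb{C}[[t]])[2]$, and \eqref{twisting} is identified with the multiplication map of the product decomposition, hence an equivalence.
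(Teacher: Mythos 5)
Your argument is correct, and for \eqref{diff class equiv} it is essentially the paper's: the paper simply names the quasi-inverse $\mathcal{S}\mapsto\mathcal{S}\otimes\mathcal{C}_{can}$, which is the inverse of your trivialization $\mathcal{C}\mapsto\shHom_{\DQ^\omega_{X/\mathcal{P}}}(\mathcal{C}_{can},\mathcal{C})$ of a gerbe equipped with a global section. For \eqref{twisting} the two routes genuinely differ. The paper exhibits a quasi-inverse directly, namely $\mathcal{C}\mapsto(\widetilde{\gr^\times}\mathcal{C},\ (\widetilde{\gr^\times}\mathcal{C})^{\otimes-1}\otimes\mathcal{C})$; this uses only the equivariance of $\widetilde{\gr^\times}$ with respect to twisting by $\mathbb{C}^\times$-gerbes and makes no reference to $\mathcal{C}_{can}$, to \eqref{diff class equiv}, or to the splitting $\mathbb{C}[[t]]^\times\cong\mathbb{C}^\times\times(1+t\mathbb{C}[[t]])$, so the two equivalences of the theorem are logically independent in the paper's treatment. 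You instead deduce \eqref{twisting} from \eqref{diff class equiv} together with the product decomposition of $\mathbb{C}[[t]]^\times[2]$, which forces you to verify the additional fact that $\widetilde{\gr^\times}\mathcal{C}_{can}$ is the trivial $\mathbb{C}^\times$-gerbe, i.e.\ that $\mathcal{C}_{can}\in\DQ^\omega_{X/\mathcal{P},0}$; your verification of this via the commutative square in the proof of Theorem \ref{thm: canonical quantization} and the triviality of the image of $\mu_2[1]$ under $\mu_2[2]\to\mathbb{C}^\times[2]$ is correct. That extra fact is precisely what the paper records, essentially without proof, in the subsequent subsection on the Fedosov class (where it asserts that $\widetilde{\gr^\times}$ coincides with the composition of $[\,\bullet\,:\mathcal{C}_{can}]$ with the projection to $\mathbb{C}^\times[2]$), so your route has the merit of supplying the argument for that assertion, at the cost of making \eqref{twisting} depend on the canonical quantization when it need not.
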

\begin{proof}
A quasi-inverse to \eqref{diff class equiv} is given by $\mathcal{S}\mapsto \mathcal{S}\otimes\mathcal{C}_{can}$.

A quasi-inverse to \eqref{twisting} is given by $\mathcal{C} \mapsto (\widetilde{\gr^\times}\mathcal{C}, \widetilde{\gr^\times}\mathcal{C}^{\otimes -1}\otimes\mathcal{C})$.
\end{proof}

\subsection{The Fedosov class}
Under the isomorphism $ {\mathbb{C}[[t]]}^\times = \mathbb{C}^\times \times \exp(t\mathbb{C}[[t]])$ the morphism $\widetilde{\gr^\times}$ coincides with the composition
\[
\DQ^\omega_{X/\mathcal{P}} \xrightarrow{[\,\bullet\, :\mathcal{C}_{can}]} \mathbb{C}[[t]]{}^\times[2] \to \mathbb{C}^\times[2]
\]
Since $\mathcal{C}_{can} \in \DQ^\omega_{X/\mathcal{P}, 0}$, the morphism $[\,\bullet :\mathcal{C}_{can}]$ restricts to the equivalence
\[
[\,\bullet :\mathcal{C}_{can}] \colon \DQ^\omega_{X/\mathcal{P},0} \to \exp(t\mathbb{C}[[t]])[2]
\]
Let $\Phi \colon \DQ^\omega_{X/\mathcal{P},0} \to \mathbb{C}[[t]][2]$ denote the composition
\[
\DQ^\omega_{X/\mathcal{P},0} \xrightarrow{[\,\bullet\, :\mathcal{C}_{can}]} \exp(t\mathbb{C}[[t]])[2] \xrightarrow{\frac1t\log} \mathbb{C}[[t]][2]
\]

The construction of the Fedosov class of a quantization of the structure sheaf was extended to the setting of algebroids in \cite{DQgerbes}. The construction associates to $\mathcal{C} \in  \DQ^\omega_{X/\mathcal{P}, 0}(X)$ the Fedosov class $\Theta(\mathcal{C}) \in \dfrac1t\omega + H^2(X;\mathbb{C}[[t]])$. It is shown in \cite{NT} that $\Theta(\mathcal{C}_{can}) = \dfrac1t\omega$. An argument similar to that in \cite{D} for the case of DQ-algebras shows that for $\mathcal{C}_0, \mathcal{C}_1 \in  \DQ^\omega_{X/\mathcal{P}, 0}(X)$, the class of $\dfrac1t\log([\mathcal{C}_1,\mathcal{C}_0])$ in $H^2(X;\mathbb{C}[[t]])$ coincides with  $\Theta(\mathcal{C}_1) - \Theta(\mathcal{C}_0)$.

Since $\Phi(\mathcal{C}_{can}) = 0$ and, clearly, $\dfrac1t\log([\mathcal{C}_1,\mathcal{C}_0]) = \Phi(\mathcal{C}_1) - \Phi(\mathcal{C}_0)$, it follows that $\Theta(\mathcal{C}) = \dfrac1t\omega + \Phi(\mathcal{C})$.

\appendix
\section{Calculus in the presence of an integrable distribution}\label{appendix: distributions}

In this section  we briefly review basic facts regarding differential calculus in the presence of an integrable complex distribution. We refer the reader to \cite{Kostant}, \cite{Rawnsley} and \cite{FW} for details and proofs.

For a $C^\infty$ manifold $X$ we denote by $\mathcal{O}_X$ (respectively, $\Omega^i_X$) the sheaf of \emph{complex valued} $C^\infty$ functions (respectively, differential forms of degree $i$) on $X$. Throughout this section we denote by $\mathcal{T}_X^\mathbb{R}$ the sheaf of \emph{real valued} vector fields on $X$. Let $\mathcal{T}_X := \mathcal{T}_X^\mathbb{R}\otimes_\mathbb{R}\mathbb{C}$.

\subsection{Complex distributions}
A \emph{(complex) distribution} on $X$ is a sub-bundle\footnote{A
sub-bundle is an $\mathcal{O}_X$-submodule which is a direct
summand locally on $X$.} of ${\mathcal T}_X$.

A distribution $\mathcal{P}$ is called \emph{involutive} if it is closed under the Lie bracket, i.e. $[\mathcal{P},\mathcal{P}] \subseteq \mathcal{P}$.

For a distribution $\mathcal{P}$ on $X$ we denote by $\mathcal{P}^\perp \subseteq \Omega^1_X$ the annihilator of $\mathcal{P}$ (with respect to the canonical duality pairing).

A distribution $\mathcal{P}$ of rank $r$ on $X$ is called \emph{integrable} if, locally on $X$, there exist functions $f_1,\ldots , f_r\in\mathcal{O}_X$ such that $df_1,\ldots , df_r$ form a local frame for $\mathcal{P}^\perp$.

It is easy to see that an integrable distribution is involutive. The converse is true when $\mathcal{P}$ is \emph{real}, i.e. $\overline{\mathcal{P}} = \mathcal{P}$ (Frobenius) and when $\mathcal{P}$ is a \emph{complex structure}, i.e. $\overline{\mathcal{P}} \cap \mathcal{P} =0$ and $\overline{\mathcal{P}} \oplus \mathcal{P}
= \mathcal{T}_X$ (Newlander-Nirenberg). More generally, according to Theorem 1 of \cite{Rawnsley}, a sufficient condition for integrability of a complex distribution $\mathcal{P}$ is
\begin{equation}\label{sufficient condition}
\text{$\mathcal{P}\cap\overline{\mathcal{P}}$ is a sub-bundle and both $\mathcal{P}$ and $\mathcal{P} + \overline{\mathcal{P}}$ are involutive.}
\end{equation}

\subsection{The Hodge filtration}\label{subsection: the hodge filtration}
Suppose that $\mathcal{P}$ is an involutive distribution on $X$.

Let $F_\bullet\Omega^\bullet_X$ denote the filtration by the powers of the differential ideal generated by $\mathcal{P}^\perp$, i.e. $F_{-i}\Omega^j_X = \bigwedge^i{\mathcal
P}^\perp\wedge\Omega^{j-i}_X\subseteq\Omega^j_X$. Let $\dbar$ denote the differential in $Gr^F_\bullet\Omega^\bullet_X$. The wedge product of
differential forms induces a structure of a commutative differential-graded algebra (DGA) on
$(Gr^F_\bullet\Omega^\bullet_X,\dbar)$.

In particular, $Gr^F_0\mathcal{O}_X = \mathcal{O}_X$, $Gr^F_0\Omega^1_X = \Omega^1_X/\mathcal{P}^\perp$ and $\dbar\colon \mathcal{O}_X \to Gr^F_0\Omega^1_X$ is equal to the composition $\mathcal{O}_X \xrightarrow{d} \Omega^1_X \to \Omega^1_X/\mathcal{P}^\perp$. Let $\mathcal{O}_{X/\mathcal{P}} := \ker(\mathcal{O}_X \xrightarrow{\dbar} Gr^F_0\Omega^1_X)$. Equivalently, $\mathcal{O}_{X/\mathcal{P}} = (\mathcal{O}_X)^\mathcal{P} \subset \mathcal{O}_X$, the subsheaf of functions locally constant along $\mathcal{P}$. Note that $\dbar$ is $\mathcal{O}_{X/\mathcal{P}}$-linear.

Theorem 2 of \cite{Rawnsley} says that, if $\mathcal{P}$ satisfies the condition \eqref{sufficient condition}, the higher $\dbar$-cohomology of $\mathcal{O}_X$ vanishes, i.e.
\begin{equation}\label{d-bar cohomology of functions}
H^i(Gr^F_0\Omega^\bullet_X,\dbar) = \left\{
\begin{array}{ll}
\mathcal{O}_{X/\mathcal{P}} & \text{if $i=0$} \\
0 & \text{otherwise.}
\end{array}
\right.
\end{equation}

\medskip

\noindent
In what follows we will assume that the complex distribution $\mathcal{P}$ under consideration is integrable and satisfies \eqref{d-bar cohomology of functions}. The latter is implied by the condition \eqref{sufficient condition}.

\subsection{$\dbar$-operators} Suppose that $\mathcal{E}$ is a vector bundle on $X$, i.e. a locally free $\mathcal{O}_X$-module of finite rank. A \emph{connection along $\mathcal{P}$} on
$\mathcal{E}$ is, by definition, a map $\nabla^\mathcal{P}:
\mathcal{E}\to\Omega^1_X/\mathcal{P}^\perp\otimes_{{\mathcal
O}_X}\mathcal{E}$ which satisfies the Leibniz rule
$\nabla^\mathcal{P}(fe)=f\nabla^\mathcal{P}(e)+\overline\partial
f\cdot e$ for $e\in \mathcal{E}$ and $f \in \mathcal{O}_X$. Equivalently, a connection along $\mathcal{P}$ is an
$\mathcal{O}_X$-linear map $\nabla^\mathcal{P}_{(\bullet)}\colon \mathcal{P} \to \shEnd_\mathbb{C}(\mathcal{E})$ which satisfies the Leibniz rule $\nabla^\mathcal{P}_\xi(fe)=
f\nabla^\mathcal{P}_\xi(e)+\overline\partial f\cdot e$ for $e\in \mathcal{E}$ and $f \in \mathcal{O}_X$. In particular, $\nabla^\mathcal{P}_\xi$ is $\mathcal{O}_{X/\mathcal{P}}$-linear. The two avatars of a connection along $\mathcal{P}$
are related by $\nabla^\mathcal{P}_\xi(e)=\iota_\xi\nabla^\mathcal{P}(e)$.

A connection along $\mathcal{P}$ on $\mathcal{E}$ is called \emph{flat} if the corresponding map $\nabla^\mathcal{P}_{(\bullet)}\colon \mathcal{P} \to \shEnd_\mathbb{C}(\mathcal{E})$ is a morphism of Lie algebras. We will refer to a flat connection along $\mathcal{P}$ on
$\mathcal{E}$ as a $\dbar$-operator on $\mathcal{E}$.

A connection on $\mathcal{E}$ along $\mathcal{P}$ extends uniquely to a derivation $\dbar_\mathcal{E}$ of the graded $Gr^F_0\Omega^\bullet_X$-module $Gr^F_0\Omega^\bullet_X\otimes_{\mathcal{O}_X}\mathcal{E}$ which is a $\dbar$-operator if and only if $\dbar_{\mathcal E}^2=0$. The complex $(Gr^F_0\Omega^\bullet_X\otimes_{{\mathcal O}_X}\mathcal{E}, \dbar_\mathcal{E})$ is referred to as the (corresponding) $\dbar$-complex. Since $\dbar_\mathcal{E}$ is $\mathcal{O}_{X/\mathcal{P}}$-linear, the sheaves $H^i(Gr^F_0\Omega^\bullet_X\otimes_{{\mathcal O}_X}{\mathcal E},\dbar_\mathcal{E})$ are $\mathcal{O}_{X/\mathcal{P}}$-modules. The vanishing of higher $\dbar$-cohomology of $\mathcal{O}_X$ \eqref{d-bar cohomology of functions} generalizes easily to vector bundles.

\lemma\label{dbar lemma}
Suppose that $\mathcal{E}$ is a vector bundle and $\dbar_\mathcal{E}$ is a $\dbar$-operator on $\mathcal{E}$. Then, $H^i(Gr^F_0\Omega^\bullet_X \otimes_{\mathcal{O}_X}\mathcal{E}, \dbar_\mathcal
{E})=0$ for $i\neq 0$, i.e. the $\dbar$-complex is a resolution of $\ker(\dbar_\mathcal{E})$. Moreover, $\ker(\dbar_\mathcal{E})$ is locally free over $\mathcal{O}_{X/\mathcal{P}}$ of rank $\rk_{\mathcal{O}_X}\mathcal{E}$ and the map $\mathcal{O}_X \otimes_{\mathcal{O}_{X/\mathcal{P}}} \ker(\dbar_\mathcal{E}) \to \mathcal{E}$ (the $\mathcal{O}_X$-linear extension of the
inclusion $\ker(\dbar_\mathcal{E}) \hookrightarrow \mathcal{E}$) is an isomorphism.
\endlemma

\begin{remark}
Suppose that $\mathcal{F}$ is a locally free $\mathcal{O}_{X/\mathcal{P}}$-module of finite rank. Then, $\mathcal{O}_X\otimes_{\mathcal{O}_{X/\mathcal{P}}}\mathcal{F}$ is a locally free $\mathcal{O}_X$-module of rank $\rk_{\mathcal{O}_{X/\mathcal{P}}}\mathcal{F}$ and is endowed in a canonical way with a $\dbar$-operator, namely, $\dbar\otimes\id$.
The assignments $\mathcal{F}\mapsto(\mathcal{O}_X\otimes_{\mathcal{O}_{X/\mathcal{P}}}\mathcal{F},\dbar\otimes\id)$ and $(\mathcal{E},\dbar_\mathcal{E}) \mapsto \ker(\dbar_\mathcal{E})$ are mutually inverse equivalences of suitably defined categories.
\end{remark}

\subsection{Calculus}\label{subsection: calculus}
The adjoint action of $\mathcal{P}$ on $\mathcal{T}_X$ preserves $\mathcal{P}$, hence descends to an action on $\mathcal{T}_X/\mathcal{P}$. The latter action defines a connection along $\mathcal{P}$, i.e. a canonical $\dbar$-operator on $\mathcal{T}_X/\mathcal{P}$ which is easily seen to coincide with the one induced via the duality pairing between the latter and $\mathcal{P}^\perp$.\footnote{In the case of a real polarization this connection is known as the Bott connection.} Let $\mathcal{T}_{X/\mathcal{P}} := (\mathcal{T}_X/\mathcal{P})^\mathcal{P}$ (the subsheaf of $\mathcal{P}$ invariant section, equivalently, the kernel of the $\dbar$-operator on $\mathcal{T}_X/\mathcal{P}$). The Lie bracket on $\mathcal{T}_X$ (respectively, the action of $\mathcal{T}_X$ on $\mathcal{O}_X$) induces a Lie bracket on $\mathcal{T}_{X/\mathcal{P}}$ (respectively, an action of $\mathcal{T}_{X/\mathcal{P}}$ on $\mathcal{O}_{X/\mathcal{P}}$). The bracket and the action on $\mathcal{O}_{X/\mathcal{P}}$ endow $\mathcal{T}_{X/\mathcal{P}}$ with a structure of an $\mathcal{O}_{X/\mathcal{P}}$-Lie algebroid.

The action of $\mathcal{P}$ on $\Omega^1_X$ by Lie derivative restricts to a flat connection along $\mathcal{P}$, i.e. a canonical $\dbar$-operator on $\mathcal{P}^\perp$ and, therefore, on $\bigwedge^i\mathcal{P}^\perp$ for all $i$. It is easy to see that the multiplication map $Gr^F_0\Omega^\bullet\otimes\bigwedge^i\mathcal{P}^\perp \to Gr^F_{-i}\Omega^\bullet[i]$ is an isomorphism which identifies the $\dbar$-complex of $\bigwedge^i\mathcal{P}^\perp$ with $Gr^F_{-i}\Omega^\bullet[i]$. Let $\Omega^i_{X/\mathcal{P}}:= H^i(Gr^F_{-i}\Omega^\bullet_X,\dbar)$ (so that $\mathcal{O}_{X/\mathcal{P}}:=\Omega^0_{X/\mathcal{P}}$). Then,  $\Omega^i_{X/\mathcal{P}} \subset \bigwedge^i\mathcal{P}^\perp \subset \Omega^i_X$. The wedge product of differential forms
induces a structure of a graded-commutative algebra  on $\Omega^\bullet_{X/\mathcal {P}} := \oplus_i \Omega^i_{X/\mathcal{P}}[-i] = H^\bullet(Gr^F\Omega^\bullet_X,\dbar)$. The multiplication induces an isomorphism $\bigwedge^i_{\mathcal{O}_{X/\mathcal{P}}} \Omega^1_{X/\mathcal{P}} \to \Omega^i_{X/\mathcal{P}}$. The de Rham differential $d$ restricts to the map $d :
\Omega^i_{X/\mathcal{P}} \to \Omega^{i+1}_{X/\mathcal{P}}$ and the complex $\Omega^\bullet_{X/\mathcal{P}}:=(\Omega^\bullet_{X/\mathcal{P}},d)$ is a commutative DGA.

The Hodge filtration $F_\bullet\Omega^\bullet_{X/\mathcal{P}}$ is defined by
\[
F_i\Omega^\bullet_{X/\mathcal{P}} = \oplus_{j\geq -i} \Omega^j_{X/\mathcal{P}} ,
\]
so that the inclusion $\Omega^\bullet_{X/\mathcal{P}} \hookrightarrow \Omega^\bullet_X$ is filtered with respect to the Hodge filtration. It follows from Lemma \ref{dbar lemma} that it is, in fact, a filtered quasi-isomorphism.

The duality pairing $\mathcal{T}_X/\mathcal{P}\otimes\mathcal{P}^\perp \to \mathcal{O}_X$ restricts to a non-degenerate pairing $\mathcal{T}_{X/\mathcal{P}} \otimes_{\mathcal{O}_{X/\mathcal{P}}} \Omega^1_{X/\mathcal{P}} \to \mathcal{O}_{X/\mathcal{P}}$.
The action of $\mathcal{T}_X/\mathcal{P}$ on $\mathcal{O}_{X/\mathcal{P}}$ the pairing and the de Rham differential are related by the usual formula $\xi(f)=\iota_\xi df$, for $\xi \in \mathcal{T}_{X/\mathcal{P}}$ and $f\in\mathcal{O}_{X/\mathcal{P}}$.

\subsection{Symplectic geometry}
A Poisson bracket on $\mathcal{O}_{X/\mathcal{P}}$ is a bi-derivation
\[
\{\cdot,\cdot\}\colon \mathcal{O}_{X/\mathcal{P}}\otimes\mathcal{O}_{X/\mathcal{P}} \to \mathcal{O}_{X/\mathcal{P}}
\]
which satisfies the Jacobi identity, i.e. defines a $\mathbb{C}$-Lie algebra structure on $\mathcal{O}_{X/\mathcal{P}}$. A Poisson bracket corresponds to a Poisson bi-vector $\pi\in \Gamma(X;\bigwedge^2\mathcal{T}_{X/\mathcal{P}})$ by the formula $\{f,g\} = \pi(df,dg)$. If the Poisson bi-vector $\pi$ is non-degenerate, then $\omega = \pi^{-1}$ is a closed non-degenerate 2-form usually referred to as the symplectic form, and the Poisson bracket is said to be symplectic. Clearly, existence of a symplectic Poisson bracket implies that $\rk_{\mathcal{O}_{X/\mathcal{P}}}\mathcal{T}_{X/\mathcal{P}}$ is even.

Suppose that $\{\cdot,\cdot\}$ is a symplectic Poisson bracket. Let $\rk_{\mathcal{O}_{X/\mathcal{P}}}\mathcal{T}_{X/\mathcal{P}} = 2n$. Darboux Lemma says that every point $x\in X$ has a neighborhood $x\in U \subset X$ such that there exist functions $p_i, q_i \in \mathcal{O}_{X/\mathcal{P}}(U)$, $i=1,\ldots, n$, which satisfy the \emph{canonical relations} $\{p_i,p_j\} = \{q_i,q_j\} = 0$, $\{p_i,q_j\} = \delta_{ij}$.

\section{Torsors and gerbes}\label{appendix: algebroid stacks}
In what follows we will be considering gerbes with abelian lien. Below we briefly recall some relevant notions and constructions with the purpose of establishing notations. We refer the reader to \cite{Dpic}, \cite{Breen} and \cite{M} for detailed treatment of Picard stacks and gerbes respectively.

Suppose that $X$ is a topological space.

\subsection{Picard stacks}
We recall the definitions from \emph{1.4.Champs de Picard strictement commutatifs} of \cite{Dpic}. 

A (strictly commutative) \emph{Picard groupoid} $\mathcal{P}$ is a non-empty groupoid equipped with a functor $+ \colon \mathcal{P}\times\mathcal{P}\to\mathcal{P}$ and functorial isomorphisms
\begin{itemize}
\item $\sigma_{x,y,z} \colon (x+y)+z \to x+(y+z)$
\item $\tau_{x,y} \colon x+y \to y+x$
\end{itemize}
rendering $+$ associative and strictly commutative, and such that for each object $x\in\mathcal{P}$ the functor $y \mapsto x+y$ is an equivalence.

A \emph{Picard stack} on $X$ is a stack in groupoids $\mathcal{P}$ equipped with a functor
$+ \colon \mathcal{P}\times\mathcal{P}\to\mathcal{P}$ and functorial isomorphisms $\sigma$ and $\tau$ as above, which, for each open subset $U \subseteq X$, endow the category $\mathcal{P}(U)$ a structure of a Picard groupoid.

\subsection{Torsors}
Suppose that $A$ is a sheaf of abelian groups on $X$. The stack of $A$-torsors will be denoted by $A[1]$; it is a gerbe since all $A$-torsors are locally trivial.

Suppose that $\phi \colon A \to B$ is a morphism of sheaves of abelian groups. The assignment $A[1] \ni T \mapsto \phi T := T\times_A B \in B[1]$ extends to a morphism $\phi \colon A[1] \to B[1]$ of stacks. There is a canonical map of sheaves of torsors $\phi = \phi_T \colon T \to \phi T$ compatible with the map $\phi$ of abelian groups and respective actions.

Suppose that $A$ and $B$ are sheaves of abelian groups. The assignment $A[1]\times B[1] \ni (S,T) \mapsto S\times T \in (A\times B)[1]$ extends to a morphism of stacks $\times \colon A[1]\times B[1] \to (A\times B)[1]$.

Suppose that $A$ is a sheaf of abelian groups with the group structure $+ \colon A\times A \to A$. The latter is a morphism of sheaves of groups since $A$ is abelian. The assignment $A[1]\times A[1] \ni (S,T) \mapsto S+T := +(S\times T)$ defines a structure of a Picard stack on $A[1]$. If $\phi \colon A \to B$ is a morphism of sheaves of abelian groups the corresponding morphism $\phi \colon A[1] \to B[1]$ is a morphism of Picard stacks.

As a consequence, the set $\pi_0A[1](X)$ of isomorphism classes of $A$-torsors is endowed with a canonical structure of an abelian group. There is a canonical isomorphism of groups $\pi_0A[1](X) \cong H^1(X;A)$.

\subsection{Gerbes}
A gerbe on $X$ is a stack in groupoids which is locally non-empty and locally connected. For a sheaf $A$ of abelian groups a $A$-gerbe $\mathcal{S}$ is a gerbe together with functorial isomorphisms $A(U) \cong \Aut(s)$, where $U \subseteq X$ is an open subset such that $\mathcal{S}(U)\neq\varnothing$ and $s\in\mathcal{S}(U)$. Morphisms of $A$-gerbes are required to respect the above identifications.

Thus, a $A$-gerbe is a twisted form of (i.e. locally equivalent to) $A[1]$. The 2-stack of $A$-gerbes will be denoted $A[2]$. Since all $A$-gerbes are locally equivalent the 2-stack $A[2]$ is a 2-gerbe.

Via the equivalence $\shEq_A(A[1],\mathcal{S}) \cong \mathcal{S}$ every $A$-gerbe $\mathcal{S}$ (is equivalent to one which) admits a canonical action of the Picard stack $A[1]$ by autoequivalences denoted $+ \colon A[1]\times \mathcal{S} \to \mathcal{S}$, $(T, L) \mapsto T+L$ endowing $\mathcal{S}$ with a structure of a 2-torsor under $A[1]$. We shall not make distinction between $A$-gerbes and  2-torsors under $A[1]$ and use the notation $A[2]$ for both.

Suppose that $\phi \colon A \to B$ is a morphism of sheaves of abelian groups and $\mathcal{S}$ is an $A$-gerbe. In particular, for any two (locally defined) objects $s_1, s_2 \in \mathcal{S}$ the sheaf $\shHom_\mathcal{S}(s_1, s_2)$ is an $A$-torsor. The stack $\phi\mathcal{S}$ is defined as the stack associated to the prestack with the same objects as $\mathcal{S}$ and $\shHom_{\phi\mathcal{S}}(s_1, s_2) := \phi\shHom_\mathcal{S}(s_1, s_2)$. Then, $\phi\mathcal{S}$ is a $B$-gerbe and the assignment $\mathcal{S} \mapsto \phi\mathcal{S}$ extends to a morphism $\phi \colon A[2] \to B[2]$. There is a canonical morphism of stacks $\phi = \phi_\mathcal{S} \colon \mathcal{S} \to \phi\mathcal{S}$ which induces the map $\phi \colon A \to B$ on groups of automorphisms.

The Picard structure on $A[1]$ gives rise to one on $A[2]$ defined in analogous fashion. As a consequence, the set $\pi_0A[2](X)$ of equivalence classes of $A[1]$-torsors is endowed with a canonical structure of an abelian group. There is a canonical isomorphism of groups $\pi_0A[2](X) \cong H^2(X;A)$.

\subsection{Picard stacks and complexes}\label{subsection: Picard stacks and complexes}
Let $A^0 \xrightarrow{d} A^1$ be a complex of sheaves of abelian groups on $X$ concentrated in degrees zero and one. Recall that a $(A^0 \xrightarrow{d} A^1)$-torsor is a pair $(T, \tau)$, where $T$ is a $A^0$-torsor and $\tau$ is a trivialization (i.e. a section) of the $A^1$-torsor $d(T) = T\times_{A^0} A^1$. A morphism of $(A^0 \xrightarrow{d} A^1)$-torsors $\phi \colon (S,\sigma) \to (T,\tau)$ is a morphism of $A^0$-torsors $\phi \colon S \to T$ such that the induced morphism of $A^1$-torsors $d(\phi) \colon dS \to dT$ which commutes with respective trivializations, i.e. $d(\phi)(\sigma) = \tau$. Alternatively, a $(A^0 \xrightarrow{d} A^1)$-torsor is a pair $(T, \mathtt{c})$, where $T$ is a $A^0$-torsor and $\mathtt{c}\colon T \to A^1$ is a map of sheaves which satisfies $\mathtt{c}(t+a) = \mathtt{c}(t)+da$. The latter is obtained from a trivialization as the composition $T \xrightarrow{t\mapsto (t,0)} T\times_{A^0} A^1 \cong A^1$. Conversely, $\mathtt{c}$ extends  canonically to the morphism of $A^1$-torsors $d(T) = T\times_{A^0} A^1 \to A^1$.

The monoidal structure on the category of $(A^0 \xrightarrow{d} A^1)$-torsors is defined as follows. Suppose that $(S,\sigma)$ and $(T,\tau)$ are $(A^0 \xrightarrow{d} A^1)$-torsors. The sum $(S,\sigma)+(T,\tau)$ is represented by $(S+T, \sigma + \tau)$, where $\sigma + \tau$ is the trivialization of $d(S)+d(T) = d(S+T)$ induced by $\sigma$ and $\tau$.

Locally defined $(A^0 \xrightarrow{d} A^1)$-torsors form a Picard stack on $X$ which we will denote by $(A^0 \xrightarrow{d} A^1)[1]$. By a result of P.~Deligne (\cite{Dpic}, Proposition 1.4.15) all Picard stacks arise in this way. The group (under the operation induced by the monoidal structure) of isomorphism classes of $(A^0 \xrightarrow{d} A^1)$-torsors on $X$, i.e. $\pi_0(A^0 \xrightarrow{d} A^1)[1](X)$ is canonically isomorphic to $H^1(X;A^0 \xrightarrow{d} A^1)$.


A morphism of complexes
\[
\phi = (\phi^0,\phi^1) \colon (A^0 \xrightarrow{d_A} A^1) \to (B^0 \xrightarrow{d_B} B^1)
\]
induces the morphism of Picard stacks $\phi \colon (A^0 \xrightarrow{d_A} A^1)[1] \to (B^0 \xrightarrow{d_B} B^1)[1]$ defined by $\phi(S,\sigma) = (\phi^0S,\phi\sigma)$, where $\phi\sigma \colon \phi^0S \to B^1$ is the unique trivialization of $d_B\phi^0S$ such that the composition $S \to \phi^0S \xrightarrow{\phi\sigma} B^1$ is equal to $\phi^1\circ\sigma$.

\subsection{2-torsors}\label{subsection: 2-torsors}
A (2-)torsor under the Picard stack $\mathcal{P}$ is a stack $\mathcal{S}$ endowed with an action $+\colon\mathcal{P}\times\mathcal{S} \to \mathcal{S}$ which is locally equivalent as $\mathcal{P}$ equipped with the action of $\mathcal{P}$ by translations. The 2-stack of $\mathcal{P}$-torsors will be denoted by $\mathcal{P}[1]$. Torsors under the Picard stack $(A^0 \xrightarrow{d} A^1)[1]$ admit an alternative description as $(A^0 \xrightarrow{d} A^1)$-gerbes. 

A $(A^0 \xrightarrow{d} A^1)$-gerbe is equivalent to the data $(\mathcal{S}, \tau)$, where $\mathcal{S}$ is an $A^0$-gerbe and $\tau$ is a trivialization of the $A^1$-gerbe $d\mathcal{S}$, i.e. an equivalence $\tau \colon d\mathcal{S} \to A^1[1]$. The composition $\mathcal{S} \xrightarrow{d} d\mathcal{S} \xrightarrow{\tau} A^1[1]$ is a functorial assignment of an $A^1$-torsor $\tau(s)$ to a (locally defined) object $s\in\mathcal{S}$. 

A 1-morphism of $(A^0 \xrightarrow{d} A^1)$-gerbes $(\mathcal{S}, \tau_\mathcal{S}) \to (\mathcal{T}, \tau_\mathcal{T})$ is a pair $(F, \lambda)$, where $F\colon \mathcal{S} \to \mathcal{T}$ is a 1-morphism of $A^0$-gerbes and $\lambda$ is a 2-morphism $\tau_\mathcal{S} \to \tau_\mathcal{T}\circ dF$.

A 2-morphism $\eta \colon (F, \lambda_F) \to (G, \lambda_G)$ between 1-morphisms $(\mathcal{S}, \tau_\mathcal{S}) \to (\mathcal{T}, \tau_\mathcal{T})$ is a 2-morphism $\eta \colon F \to G$ which satisfies $\lambda_G = (\id_{\tau_\mathcal{T}}\diamond d\eta)\circ\lambda_F$.

The 2-stack of $(A^0 \xrightarrow{d} A^1)$-gerbes will be denoted $(A^0 \xrightarrow{d} A^1)[2]$. 

Every $(A^0 \xrightarrow{d} A^1)$-gerbe admits a canonical action of $(A^0 \xrightarrow{d} A^1)[1]$ by autoequivalences as follows. The action of $A^0[1]$ on $\mathcal{S}$ extends to an action of $(A^0 \xrightarrow{d} A^1)[1]$ on $(\mathcal{S}, \tau)$. Namely, $(T,\lambda) \in (A^0 \xrightarrow{d} A^1)[1]$ gives rise to the autoequivalence $(T+(\cdot), \lambda)$.

The Picard structure on $(A^0 \xrightarrow{d} A^1)[1]$ gives rise to one on $(A^0 \xrightarrow{d} A^1)[2]$ defined in an analogous fashion. As a consequence, the set $\pi_0(A^0 \xrightarrow{d} A^1)[2](X)$ of equivalence classes of $(A^0 \xrightarrow{d} A^1)$-gerbes is endowed with a canonical structure of an abelian group. There is a canonical isomorphism of groups $\pi_0(A^0 \xrightarrow{d} A^1)[2](X) \cong H^2(X;A^0 \xrightarrow{d} A^1)$.

\subsection{Extensions and torsors}\label{subsection: Extensions and torsors}
Suppose that $A$ and $B$ are sheaves of modules over a sheaf of rings $k$ on $X$. We denote by $\gpdExt_k^1(A,B)$ the category of extensions
\[
E:\ \ \ 0 \to B \to E \to A \to 0
\]
The morphisms in $\gpdExt_k^1(A,B)$ are maps of short exact sequences which induce the identity maps on $A$ and $B$. As any such map is an isomorphism, the category $\gpdExt_k^1(A,B)$ is a groupoid. The Baer sum of extensions endows the category $\gpdExt_k^1(A,B)$ with a structure of a Picard groupoid. A zero object is given by the split extension $E_\circ = B\oplus A$.

The assignment $X \supseteq U \mapsto \gpdExt_k^1(A\vert_U,B\vert_U)$, $U$ an open subset, defines a Picard stack on $X$ denoted $\shgpdExt_k^1(A,B)$. For any injective resolution $I^0 \to I^1 \to \cdots$ of $B$ there is an equivalence of Picard stacks $(\tau^{\leqslant 1}\shHom_k(A,I^\bullet))[1] \cong \shgpdExt_k^1(A,B)$ and, in particular, $\pi_0\shgpdExt_k^1(A,B) = \shExt_k^1(A,B)$ and $\pi_1\shgpdExt_k^1(A,B) = \shHom_k(A,B)$, where $\phi\in\shHom_k(A,B)$ acts by $(b,a) \mapsto (b+\phi(a))$.

The canonical functor $\shHom_k(A,B)[1] \to \shgpdExt_k^1(A,B)$ defined by $T\mapsto T\times_{\Hom_k(A,B)}E_\circ$ is fully faithful with essential image the subcategory of extensions which admit a splitting locally on $X$. Conversely, suppose that the extension $E$ is split locally on X. Then, the map $\shHom_k(A, E) \to \shHom_k(A, A)$ is an epimorphism and $E_{/\id_A} := \shHom_k(A, E)\times_{\shHom_k(A, A)} \id_A$, where $\id_A \colon \ast_X \to \shHom_k(A, A)$ is the global section $\id_A\in\Hom_k(A, A)$, is the corresponding $\shHom_k(A,B)$-torsor.

In particular, using the canonical identification $\shHom_k(k,B) = B$, we obtain the canonical equivalence $B[1] \to \shgpdExt_k^1(k,B)$. A quasi-inverse associates to $T\in B[1]$ the unique extension
\[
0 \to B \to \widetilde{T} \to k \to 0
\]
such that $T = \ast_X\times_k\widetilde{T}$, where $\ast_X \xrightarrow{1} k$ is the unit section.

\subsection{Extensions and 2-torsors}\label{subsection: Extensions and 2-torsors}
We continue with notations introduced in \ref{subsection: Extensions and torsors}.
An extension
\[
\widetilde{E} \colon \ \ \ 0 \to C \to \widetilde{E} \to E \to 0 \ \ \in \gpdExt_k^1(E,C)
\]
gives rise to the extension $\widetilde{E}_{/B} := B\times_E\widetilde{E} \in \gpdExt_k^1(B,C)$. The assignment $\widetilde{E} \mapsto \widetilde{E}_{/B}$ extends to a morphism of Picard stacks $\shgpdExt_k^1(E,C) \to \shgpdExt_k^1(B,C)$; for $K \in \gpdExt_k^1(B,C)$ we denote by $\shgpdExt_k^1(E,C)_{/K}$ the corresponding fiber. The Baer sum operation on $\shgpdExt_k^1(E,C)$ restricts to the pairings
\[
\shgpdExt_k^1(E,C)_{/K}\times\shgpdExt_k^1(E,C)_{/L}\to\shgpdExt_k^1(E,C)_{/K\dotplus L}
\]
and, in particular, a structure of a Picard stack on $\shgpdExt_k^1(E,C)_{/\mathbf{0}}$, where $\mathbf{0}$ is the split extension, and to a structure of a $\shgpdExt_k^1(E,C)_{/\mathbf{0}}$-torsor on $\shgpdExt_k^1(E,C)_{/K}$.

The morphism of Picard stacks
\[
\shgpdExt_k^1(A,C) \to \gpdExt_k^1(E,C)\ \colon F \mapsto E\times_A F
\]
establishes the canonical equivalence $\shgpdExt_k^1(A,C) \cong \shgpdExt_k^1(E,C)_{/\mathbf{0}}$. A quasi-inverse is given by $\widetilde{E}\mapsto \coker(B\to\widetilde{E})$, where the map $B\to\widetilde{E}$ is deduced form the splitting of $\widetilde{E}_{/B}$.  In what follows we shall regard $\shgpdExt_k^1(E,C)_{/K}$ as a torsor under $\shgpdExt_k^1(A,C)$. 

We apply the above considerations to an exact sequence
\begin{equation}\label{extension of length two}
0 \to C \to E^0 \to E^1 \to A \to 0
\end{equation}
with $B = \coker(C \to E^0) = \ker(E^1 \to A)$, $E=E^1$ and $K=E^0$ to obtain the $\shgpdExt_k^1(A,C)$-torsor $\shgpdExt_k^1(E^1,C)_{/E^0}$. The latter is equivalent to the category of commutative diagrams
\[
\begin{CD}
& & 0 & & 0 \\
& & @VVV @VVV \\
0 @>>> C @= C @>>> 0\\
& & @VVV @VVV @VVV\\
0 @>>> E^0 @>>> \widetilde{E^1} @>>> A @>>> 0 \\
& & @VVV @VVV @| \\
0 @>>> B @>>> E^1 @>>> A @>>> 0 \\
& & @VVV @VVV @VVV \\
& & 0 & & 0 & & 0
\end{CD}
\]
with exact rows and columns and morphisms thereof which induced identity maps on all objects except $\widetilde{E^1}$.

Suppose that
\[
\begin{CD}
0 @>>> C @>>> E^0 @>>> E^1 @>>> A @>>> 0 \\
& & @VVV @V{\phi^0}VV @VV{\phi^1}V @| \\
0 @>>> D @>>> F^0 @>>> F^1 @>>> A @>>> 0
\end{CD}
\]
is a commutative diagram with exact rows. Let $K := \coker(D \to F^0) = \ker(F^1 \to A)$. Let $\phi_*\widetilde{E^1} := F^0\bigsqcup_{E^0}\widetilde{E^1}$. The composition
\[
\phi_*\widetilde{E^1} = F^0\bigsqcup_{E^0}\widetilde{E^1} \to K\bigsqcup_{F^0}F^0\bigsqcup_{E^0}\widetilde{E^1} \cong K\bigsqcup_{B}B\bigsqcup_{E^0}\widetilde{E^1} \cong K\bigsqcup_{B}E^1 \to F^1
\]
gives rise to the exact sequence
\[
0 \to D \to \phi_*\widetilde{E^1} \to F^1 \to 0 .
\]
Moreover, the canonical map $F^0 \to K\times_{F^1}\phi_*\widetilde{E^1}$ is an isomorphism, hence $\phi_*\widetilde{E^1} \in \shgpdExt_k^1(F^1,D)_{/F^0}$. The assignment $\widetilde{E^1} \mapsto \phi_*\widetilde{E^1}$ extends to a morphism
\[
\phi_* \colon \shgpdExt_k^1(E^1,C)_{/E^0} \to \shgpdExt_k^1(F^1,D)_{/F^0} .
\]
of torsors relative to the morphism of Picard stacks $\shgpdExt_k^1(A,C) \to \shgpdExt_k^1(A,D)$ induced by the map $\phi^0\vert_C \colon C \to D$.

Suppose that $A=k$. Then, $\shgpdExt_k^1(A,C) \cong C[1]$, and the class of the $C[1]$-torsor $\shgpdExt_k^1(E^1,C)_{/E^0}$ in $H^2(X;C)$ is equal to that of the extension \eqref{extension of length two}.

\subsection{2-gerbes}
For a Picard stack $\mathcal{P}$ a $\mathcal{P}$-gerbe is a 2-stack in 2-groupoids $\mathfrak{S}$ which is locally non-empty and locally connected together with the data of
equivalences $\eta_s \colon \mathcal{P}\vert_U \to \shAut_\mathfrak{S}(s)$, where $U\subseteq X$ is an open subset such that $\mathfrak{S} \neq \varnothing$ and $s\in\mathfrak{S}(U)$ which are coherent in the sense specified in \cite{Breen}. The collection of $\mathcal{P}$-gerbes naturally forms a $3$-category and the associated $3$-stack is denoted by $\mathcal{P}[2]$. A $\mathcal{P}$-gerbe $\mathfrak{G}$ naturally defines a torsor under $\mathcal{P}[1]$, namely $\shEq(\mathcal{P}[1], \mathfrak{S})$. If $\mathcal{P} = (A^{0} \xrightarrow{d} A^{1})[1]$, then the common value of $(A^{0} \xrightarrow{d} A^{1})[1][2] = (A^{0} \xrightarrow{d} A^{1})[2][1]$ is denoted by $(A^{0} \xrightarrow{d} A^{1})[3]$. The $3$-stack $(A^{0} \xrightarrow{d} A^{1})[3]$ admits a natural Picard structure so that $\pi_{0}(A^{0} \xrightarrow{d} A^{1})[3](X)$ is an abelian group which is canonically isomorphic to $H^{3}(X, A^{0} \xrightarrow{d} A^{1})$.


For a map of complexes morphism of complexes
\[
\phi = (\phi^0,\phi^1) \colon (A^0 \xrightarrow{d_A} A^1) \to (B^0 \xrightarrow{d_B} B^1)
\]
and a $(A^0 \xrightarrow{d_A} A^1)[1]$-gerbe $\mathfrak{S}$, the $(B^0 \xrightarrow{d_B} B^1)[1]$-gerbe $\phi\mathfrak{S}$ is defined as the 2-stack associated to the pre-$2$-stack with same objects as $\mathfrak{S}$ and $\shHom_{\phi\mathfrak{S}}(s,t) = \phi\shHom_{\mathfrak{S}}(s,t)$.

\subsection{Algebroids}\label{subsection: algebroids}
Let $k$ a commutative ring with unit. Recall that a $k$-algebroid is a stack in $k$-linear categories $\mathcal{C}$ such that the substack of isomorphisms $i\mathcal{C}$ (which is a stack in groupoids) is a gerbe.

For a $k$-algebra $A$ we denote by $A^+$ the $k$-linear category with one object denoted by $\ast$, whose endomorphism algebra is $A$.

Suppose that $\mathcal{A}$ is a sheaf of $k$-algebras on $X$. The assignment $U\mapsto \mathcal{A}(U)^+$, $U \subseteq X$ an open subset,  defines a prestack on $X$; we denote  the associated stack by $\mathcal{A}^+$.  Note that $\mathcal{A}^+$ is equivalent to the stack of $\mathcal{A}^\op$-modules locally isomorphic to $\mathcal{A}$. Clearly, the category $\mathcal{A}^+(X)$ is non-empty.

Conversely, let $\mathcal{C}$ be a $k$-algebroid such that the category $\mathcal{C}(X)$ is non-empty. Let $L\in\mathcal{C}(X)$, and let $\mathcal{A} := \shEnd_\mathcal{C}(L)^\op$. The assignment $U \mapsto (\mathcal{A}(U)^+ \to \mathcal{C}(U) \colon \ast \mapsto L\vert_U)$ extends to an equivalence $\mathcal{A}^+ \to \mathcal{C}$.

For a sheaf of $k$-algebras $\mathcal{A}$ on $X$ a \emph{twisted form of $\mathcal{A}$} is a $k$-algebroid locally $k$-linearly equivalent to $\mathcal{A}^+$.

For a $k$-algebra $A$ we denote by $\zentrum(A)$ the center of $A$. For a $k$-linear stack $\mathcal{C}$ we denote by $\zentrum(\mathcal{C})$ the center of $\mathcal{C}$, i.e. the sheaf of $k$-algebras defined by $U \mapsto \End_k(\id_{\mathcal{C}\vert_U})$. Note that, for a sheaf of $k$-algebras $\mathcal{A}$, $\zentrum(\mathcal{A}^+) = \zentrum(\mathcal{A})$.

For a $k$-algebroid $\mathcal{C}$ there is a canonical action of $\zentrum(\mathcal{C})^\times$-torsors on $\mathcal{C}$, denoted $L \mapsto T\otimes L$, for $L \in \mathcal{C}$, $T \in \zentrum(\mathcal{C})^\times[1]$, hence a canonical monoidal functor $\zentrum(\mathcal{C})^\times[1] \to \shAut(\mathcal{C})$.


\end{document}